\tikzset{font=\small}
\newtheorem{theorem}{Theorem}[section]
\newtheorem{corollary}[theorem]{Corollary}
\newtheorem{proposition}[theorem]{Proposition}
\newtheorem{example}[theorem]{Example}
\newtheorem{lemma}[theorem]{Lemma}
\newtheorem{question}[theorem]{Question}
\theoremstyle{definition}
\newtheorem{remark}[theorem]{Remark}
\newtheorem{definition}[theorem]{Definition}
\numberwithin{equation}{section}
\subjclass[2010]{20M10,  20M18, 20M05, 20M99, 20F05}
\keywords{expansion, prefix group expansion, restriction monoid, inverse monoid, partial action, premorphism}
\title[Partial actions and proper extensions of restriction semigroups]{Partial actions and proper extensions of two-sided restriction semigroups}
\author{Mikhailo Dokuchaev}
\address{M. Dokuchaev: Insituto de Matem\'atica e Estat\'istica, Universidade de S\~ao Paulo,  Rua do Mat\~ao, 1010, S\~ao Paulo, SP,  CEP: 05508--090, Brazil}
\email{dokucha@gmail.com}
\author{Mykola Khrypchenko}
\address{M. Khrypchenko: Departamento de Matem\'atica, Universidade Federal de Santa Catarina, Campus Reitor Jo\~ao David Ferreira Lima, Florian\'opolis, SC,  CEP: 88040--900, Brazil  \and Centro de Matem\'atica e Aplica\c{c}\~oes, Faculdade de Ci\^{e}ncias e Tecnologia, Universidade Nova de Lisboa, 2829-516 Caparica, Portugal}
\email{nskhripchenko@gmail.com}
\author{Ganna Kudryavtseva}
\address{G. Kudryavtseva: University of Ljubljana,
	Faculty of Civil and Geodetic Engineering, Jamova cesta~2, SI-1000 Ljubljana, Slovenia \and Institute of Mathematics, Physics and Mechanics, Jadranska ulica 19, SI-1000 Ljubljana, Slovenia}
\email{ganna.kudryavtseva\symbol{64}fgg.uni-lj.si}
\subjclass[2010]{Primary 20M10, 20M30; Secondary 20M18, 20M99}
\keywords{Restriction semigroup, inverse semigroup,  proper extension, idempotent pure, partial action, premorphism}
\begin{document}
	\maketitle
	
	\begin{abstract} 
		We prove a structure result on proper extensions of two-sided restriction semigroups in terms of partial actions, generalizing respective results for monoids and for inverse semigroups and upgrading the latter. We introduce and study several classes of partial actions of two-sided restriction semigroups that generalize partial actions of monoids and of inverse semigroups. We establish an adjunction between the category ${\mathcal{P}}(S)$ of proper extensions of a restriction semigroup $S$ and a category ${\mathcal{A}}(S)$ of partial actions of $S$ subject to certain conditions going back to the work of O'Carroll. In the category ${\mathcal{A}}(S)$, we specify two isomorphic subcategories, one being reflective and the other one coreflective, each of which is equivalent to the category ${\mathcal{P}}(S)$.
	\end{abstract}
	
	\section{Introduction}
	Two-sided restriction semigroups, or simply restriction semigroups, also known as weakly $E$-ample semigroups, are non-involutive generalizations of inverse semigroups. These are algebras $(S; \cdot\, , ^*, ^+)$ of type $(2,1,1)$ such that $(S; \cdot)$ is a semigroup and the unary operations $^*$ and $^+$ resemble the operations $s\mapsto s^{-1}s$ and $s\mapsto ss^{-1}$, respectively, on an inverse semigroup. The latter operations, for inverse semigroups of partial bijections, are precisely the operations of taking the domain and the range idempotents of $s$.  Restriction semigroups and their one-sided analogues arise naturally from various sources and have been widely studied by many authors, see~\cite{CG,Gomes05,Gomes06,GG00,GHSz17,GH09,H2,Jones16,Kud15,Kud19,Sz12} and references therein.
	
	Many results on inverse semigroups  admit extensions to restriction semigroups. These extensions sometimes include special cases, which are important general theorems about restriction semigroups but do not restrict to such theorems for inverse semigroups. This shows that restriction semigroups have their own rich theory, which is not a mere generalization of the inverse semigroup theory. 
		
	Proper restriction semigroups \cite{CG, GHSz17, Jones16,  Kud15, Kud19,Sz12} generalize the well-established and widely studied class of $E$-unitary inverse semigroups. To illustrate the thesis of the previous paragraph we mention almost perfect restriction semigroups \cite{Jones16} (called ultra proper restriction semigroups in \cite{Kud15}) which form a subclass of proper restriction semigroups and play a central role in a number of structure results, but it does not reduce to a class of inverse semigroups of a similar importance.
	
	A restriction semigroup $S$ is proper if and only if its minimum reduced restriction semigroup congruence $\sigma$ coincides with the compatibility relation $\sim$ (see Section \ref{s:prelim} for details). In other words, $S$ is proper if and only if 
for all $s,t\in S$: $\sigma^{\sharp}(s) = \sigma^{\sharp}(t) \Leftrightarrow s\sim t$, where $\sigma^{\sharp}\colon S\to S/\sigma$ is the canonical projection. Proper extensions of restriction semigroups generalize proper restriction semigroups and arise from surjective morphisms $\psi\colon S\to T$ between restriction semigroups (where $T$ is not necessarily reduced) satisfying a similar requirement:  for all $s,t\in S$: $\psi(s)=\psi(t)\Leftrightarrow s\sim t$.  At the same time proper extensions of restriction semigroups generalize idempotent pure extensions of inverse semigroups \cite{OC77, LMS}.

	The work \cite{CG} by Cornock and Gould establishes a structure result for proper restriction semigroups in terms of partial actions of monoids on semilattices, generalizing the formulation of McAlister's $P$-theorem due to Kellendonk and Lawson \cite{KL} which stems from Petrich and Reilly \cite{PR79}. The partial actions therein are in general non-globalizable and thus one can not apply globalization to reformulate the result  in terms of actions. This is in contrast with the situation with partial group actions in the structure result for $E$-unitary inverse semigroups \cite{KL} which are always globalizable.
	
	In the present paper we provide an extension, as well as an appropriate framework for such an extension, of the result on the structure of proper restriction semigroups in terms of partial actions \cite{CG} mentioned above. We replace proper restriction semigroups thereof  by proper extensions of restriction semigroups. At the same time, we extend and upgrade the partial action variant \cite{Kh17} of  O'Carroll's result on the structure of  idempotent pure extensions of inverse semigroups \cite{OC77}.  Furthermore,  we describe the relationship between the category of proper extensions of a restriction semigroup $S$ and certain categories of partial actions of $S$.

	Our results involve  proper extensions and partial actions of  restriction semigroups, which we define and discuss prior to formulating the results. We adapt the notion of a proper extension from the work by Gomes~\cite{Gomes05} (where proper extensions of left restriction semigroups were studied, from a different perspective). Our notions of a partial action of a restriction semigroup and its corresponding premorphism are influenced by similar notions existing in the contexts of left restriction semigroups \cite{EQF05, Gomes06, GH09} and of inverse semigroups \cite{GH09, KL, LMS, MR77}. In fact, we  introduce several classes of partial actions (more precisely, of premorphisms associated to partial actions) of restriction semigroups by partial bijections that arise naturally in analogy with suitable classes of partial actions of inverse semigroups (in particular, extending them). Some of these classes do not have precursors in the literature which is notably the class of locally strong premorphisms that extend  premorphisms between inverse semigroups from \cite{LMS}.
	
	We now briefly describe the structure of the paper and highlight its main results.  Section~\ref{s:prelim} contains prerequisites. In Section~\ref{s:premorphisms} we define premorphisms between restriction semigroups and discuss their connection with partial actions by partial bijections. We introduce  several classes of premorphisms: order-preserving premorphisms,  strong and locally strong premorphisms, multiplicative and locally multiplicative premorphisms. In Section \ref{s:structure} we bring into discussion proper extensions of restriction semigroups and the partial action product $Y\rtimes^q_{\varphi} S$ of a semilattice $Y$ acted partially upon by a restriction semigroup $S$, subject to certain conditions (conditions (A1)--(A4)), with respect to a semilattice morphism $q\colon Y\to P(S)$. We prove that $Y\rtimes^q_{\varphi} S$ is a restriction semigroup and, moreover, a proper extension of~$S$. We then define the two {underlying premorphisms, $\widehat{\psi}$ and  $\widetilde{\psi}$, of a proper extension $\psi\colon T\to S$ and prove Theorem~\ref{th:isom_epsilon} stating that given a proper extension $\psi\colon T\to S$, the semigroup $T$ decomposes into a partial action product of $P(T)$ acted upon (by means of either $\widehat{\psi}$ or $\widetilde{\psi}$) by $S$. Further, in Section \ref{s:classes} we study the relationship between the properties of $\psi$, $\widehat{\psi}$ and  $\widetilde{\psi}$. Depending on these properties, we single out several natural classes of proper extensions: order-proper extensions, extra proper extensions (which generalize extra proper restriction semigroups \cite{CG, Kud15}) and perfect extensions (which generalize perfect \cite{Jones16}, or ultra proper \cite{Kud15},  restriction semigroups). In Section \ref{s:categorical} we introduce three categories of partial actions of $S$ (more precisely, of premorphisms $S\to {\mathcal{I}}(X)$): the category of `most general' partial actions ${\mathcal{A}}(S)$ from which a proper extension of $S$ can be constructed, which goes back to the work of O'Carroll \cite{OC77}, and two its subcategories, ${\widehat{\mathcal{A}}}(S)$ and ${\widetilde{\mathcal{A}}}(S)$, subject to certain additional restrictions, so that  ${\widehat{\mathcal{A}}}(S)$ and ${\widetilde{\mathcal{A}}}(S)$ contain as objects premorphisms of the form $\widehat{\psi}$ and $\widetilde{\psi}$, respectively. 
		In Theorem \ref{th:adjunctions} we prove that the categories ${\widehat{\mathcal{A}}}(S)$ and ${\widetilde{\mathcal{A}}}(S)$ are isomorphic and, furthermore, that ${\widehat{\mathcal{A}}}(S)$ is a reflective subcategory and ${\widetilde{\mathcal{A}}}(S)$ a coreflective subcategory of the category $\mathcal{A}(S)$. Afterwards, we establish an equivalence of the category ${\mathcal{P}}(S)$ of proper extensions of $S$ and the category ${\widehat{\mathcal{A}}}(S)$ (see Theorem \ref{th:equivalence_of_cats}) and formulate its consequences. We also address a question: which property of morphisms of the category $\mathcal{A}(S)$ corresponds to surjectivity of morphisms of the subcategory ${\mathcal{P}}(S)$? We find condition (M3) which at first glance may seem too weak but does the work. We thus specialize the obtained results to respective facts about the subcategory ${\mathcal{P}}_s(S)$ of ${\mathcal{P}}(S)$ whose morphisms are surjective and the subcategory $\mathcal{A}_s(S)$ of $\mathcal{A}(S)$ whose morphisms satisfy condition~(M3). The paper concludes with  Section \ref{s:f} where proper extensions $\psi\colon T\to S$ such that $\psi^{-1}(s)$  has a maximum element for each $s\in S$ are touched upon. If the premorphism $\widetilde{\psi}$ is order-preserving such extensions generalize $F$-restriction monoids \cite{Jones16,Kud15} and at the same time $F$-morphisms from \cite{LMS}. If, in addition, $\widetilde{\psi}$ is locally strong, the obtained class is analogous to that of $F\!A$-morphisms from~\cite{Gomes06} and generalizes extra proper $F$-restriction monoids from \cite{Kud15} as well as $F$-morphisms from \cite{LMS}.

		\section{Preliminaries}\label{s:prelim}
		\subsection{Restriction semigroups} In this section we recall the definition and basic properties of two-sided restriction semigroups. 
		More details can be found in \cite{Cornock_phd, G,H2}.
		
		A {\em left restriction semigroup} is an algebra $(S; \cdot \,, ^+)$, where $(S;\cdot)$ is a semigroup and $^+$ is a unary operation satisfying the following identities:
		\begin{equation}\label{eq:axioms:plus}
		x^+x=x, \,\,\, x^+y^+=y^+x^+, \,\,\, (x^+y)^+=x^+y^+, \,\,\, (xy)^+x=xy^+.
		\end{equation}
		Dually, a {\em right restriction semigroup} is an algebra $(S; \cdot \,, ^*)$, where $(S;\cdot)$ is a semigroup and $^*$ is a unary operation satisfying the following identities:
		\begin{equation}\label{eq:axioms:star}
		xx^*=x, \,\,\, x^*y^*=y^*x^*, \,\,\, (xy^*)^*=x^*y^*, \,\,\, y(xy)^*=x^*y.
		\end{equation}
		A {\em two-sided restriction semigroup}, or just a {\em restriction semigroup}, is an algebra $(S; \cdot\, , ^*, ^+)$, where $(S;\cdot\, , ^+)$ is a left restriction semigroup, $(S;\cdot\, , ^*)$ is a right restriction semigroup, and the operations $^*$ and $^+$ are connected by the following identities:
		\begin{equation}\label{eq:axioms:common}
		(x^+)^*=x^+,\,\,\, (x^*)^+=x^*.
		\end{equation}
		In this paper we always consider restriction semigroups  as $(2,1,1)$-algebras.  It is immediate from the definition that restriction semigroups form a variety of signature $(2,1,1)$. 
		A   {\em morphism of restriction semigroups} is, by definition,  a $(2,1,1)$-morphism, that is, it preserves the multiplication and the unary operations $^*$ and $^+$.

		Let $S$ be a restriction semigroup. From \eqref{eq:axioms:common} it follows that 
		$$
		\{s^*\colon s\in S\}=\{s^+\colon s\in S\}.
		$$
		This set, denoted by $P(S)$, is closed with respect to the multiplication. Furthermore, it is a semilattice and  $e^*=e^+=e$ holds for all $e\in P(S)$.  It is called the {\em semilattice of projections} of $S$ and its elements are called {\em projections}. A projection is necessarily an idempotent, but a restriction semigroup may contain idempotents that are not projections.

		We will often use the following equalities: 
		\begin{equation}\label{eq:mov_proj}
		es = s(es)^* \,\, \text{and} \,\, se = (se)^+s \,\, \text{for all} \,\, s\in S  \,\, \text{and} \,\, e\in P(S);
		\end{equation}
		\begin{equation}\label{eq:rule1}
		(se)^* = s^*e \,\, \text{and} \,\, (es)^+ = es^+ \,\, \text{for all} \,\, s\in S  \,\, \text{and} \,\, e\in P(S);
		\end{equation}
		\begin{equation}\label{eq:consequences}
		(st)^*=(s^*t)^*,\,\,\, (st)^+=(st^+)^+ \,\, \text{for all} \,\, s,t\in S.
		\end{equation}
		
		Inverse semigroups can be looked at as restriction semigroups if one puts $s^*=s^{-1}s$ and $s^+=ss^{-1}$ for all elements $s$. If $S$ is an inverse semigroup then, necessarily, $P(S)=E(S)$.
		
		\begin{remark}\label{rem:morphisms_inv} 
			When considering inverse semigroups as restriction semigroups, by a morphism between them it is natural to mean a $(\cdot\,, ^*,^+)$-morphism. However, a map between inverse semigroups that preserves the multiplication~$\cdot$ necessarily preserves also the inverse operation $^{-1}$.  It follows that $(\cdot\,, ^*,^+)$-morphisms between inverse semigroups coincide with  semigroup morphisms and thus also with $(\cdot\,, ^{-1})$-morphisms.
		\end{remark}
		
		  Elements $s,t\in S$ are said to be {\em compatible}, denoted $s\sim t$, if $st^* = ts^*$ and $t^+s = s^+t$.  Note that in the symmetric inverse semigroup, two elements are compatible if an only if they agree on the intersection of their domains. The following properties related to the natural partial order and the compatibility relation will be used throughout the paper.

		\begin{lemma}\label{lem:lem1} Let $S$ be a restriction semigroup and $s,t\in S$. Then:
			\begin{enumerate}[(1)]
				\item \label{i:b1} $s\leq t$ if and only if  $s=tf$ for some  $f\in P(S)$;
				\item \label{i:b2} $s\leq t$ if and only if $s=ts^*$ if and only if $s=s^+t$;
				\item \label{i:b5} $s\leq t$ implies $su\leq tu$ and $us\leq ut$ for all $u\in S$;
				\item\label{i:b6}  $s\leq t$ implies $s^*\leq t^*$ and $s^+\leq t^+$; 
				\item \label{i:b61} $s\leq t$ implies $s\sim t$;
				\item \label{i:b62} $s,t\leq u$ for some $u\in S$ implies $s\sim t$;
				\item\label{i:b7}  $s\sim t$ and $s^*\leq t^*$ (or $s^+\leq t^+$) imply $s\leq t$;
				\item\label{i:b8} $s\sim t$ and $s^*=t^*$ (or $s^+=t^+$) imply $s=t$.
			\end{enumerate}
		\end{lemma}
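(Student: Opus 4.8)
The plan is to derive every item from short direct computations with the axioms \eqref{eq:axioms:plus}--\eqref{eq:axioms:common}, the derived rules \eqref{eq:mov_proj}--\eqref{eq:consequences}, and the fact that $P(S)$ is a semilattice, so that its elements are idempotent, commute, and satisfy $e\le f\iff e=ef$. I would prove \eqref{i:b1} and \eqref{i:b2} first, since these merely repackage the natural partial order into the forms $s=tf$ with $f\in P(S)$, $s=ts^*$, and $s=s^+t$, and thereafter I would freely use whichever form is most convenient. Taking $s\le t\iff s=ts^*$ as the working definition (any of the equivalent conditions in \eqref{i:b1}--\eqref{i:b2} would serve equally well), \eqref{i:b1} comes down to the remark that if $s=tf$ with $f\in P(S)$ then $s^*=(tf)^*=t^*f$ by \eqref{eq:rule1}, so $ts^*=tt^*f=tf=s$; and for \eqref{i:b2} it remains to see that $s=ts^*\iff s=s^+t$, which follows by applying the two halves of \eqref{eq:mov_proj} to the projections $s^*$ and $s^+$, together with the identities $(ts^*)^+=s^+$ and $(s^+t)^*=s^*$.

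Next I would handle the ``order'' items \eqref{i:b5} and \eqref{i:b6}. For \eqref{i:b6}, from $s=ts^*$ and \eqref{eq:rule1} one gets $s^*=(ts^*)^*=t^*s^*$, i.e.\ $s^*\le t^*$, and dually $s=s^+t$ gives $s^+\le t^+$. For \eqref{i:b5}, right translation is immediate ($s=tf$ gives $us=(ut)f$ with $f\in P(S)$); for left translation I would write $su=(tf)u$ and move the projection past $u$ using \eqref{eq:mov_proj}, obtaining $su=tu\,(fu)^*$ with $(fu)^*\in P(S)$, so $su\le tu$. This is the only spot where the naive step ($su=t(fu)$, pretending $fu$ is a projection) fails and has to be replaced, and it is the one point I would flag as not purely mechanical; everything else is bookkeeping.

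For the compatibility items I would begin with \eqref{i:b62}: if $s=us^*$ and $t=ut^*$ then $st^*=us^*t^*=ut^*s^*=ts^*$, and symmetrically, writing $s=s^+u$ and $t=t^+u$, one gets $t^+s=t^+s^+u=s^+t^+u=s^+t$, so $s\sim t$. Then \eqref{i:b61} is the case $u=t$ of \eqref{i:b62} (as $t\le t$), or a two-line direct check. For \eqref{i:b7}, assuming $s\sim t$ and $s^*\le t^*$, i.e.\ $s^*=s^*t^*$, I would compute $s=ss^*=s(s^*t^*)=(ss^*)t^*=st^*=ts^*$, the last equality by compatibility, whence $s\le t$ by \eqref{i:b2}; the parenthetical version with $s^+\le t^+$ is dual, via $t^+s=s^+t$. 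Finally \eqref{i:b8} follows from \eqref{i:b7}: when $s^*=t^*$ we obtain $s\le t$ and, by symmetry of $\sim$ and of equality, also $t\le s$, and then $t=st^*=ss^*=s$ (using $t^*=s^*$ and $ss^*=s$), or one simply invokes antisymmetry of $\le$; the $s^+=t^+$ case is dual. I do not anticipate a genuine obstacle here --- the content of the lemma is really the careful choice, at each step, of the right identity among \eqref{eq:mov_proj}--\eqref{eq:consequences} and the right projection to slide past which element.
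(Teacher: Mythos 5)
Your proof is correct; the paper states Lemma \ref{lem:lem1} without proof (these are standard facts, with the reader referred to the references cited at the start of Section \ref{s:prelim}), and your direct verification from the axioms and the derived rules \eqref{eq:mov_proj}--\eqref{eq:consequences} is exactly the standard argument. The one step you flag --- rewriting $su=t(fu)$ as $tu\,(fu)^*$ via \eqref{eq:mov_proj} rather than pretending $fu$ is a projection --- is indeed the only non-mechanical point, and you handle it correctly.
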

		
		A {\em reduced restriction semigroup} is a restriction semigroup $S$ for which $|P(S)|=1$. Then, necessarily, $S$ is a monoid and $P(S)=\{1\}$ so that  \mbox{$s^*=s^+=1$} holds for all $s\in S$. On the other hand, any monoid $S$ can be endowed with the structure of a restriction semigroup by putting $s^*=s^+= 1$ for all $s\in S$. Hence reduced restriction semigroups can be identified with monoids. 
		
		For further use, we record the following facts.
		
		\begin{lemma}\label{lem:aux14} \mbox{}
		\begin{enumerate}
		\item[(1)] Let $s,t\in S$ and $e\in P(S)$. If $e\leq (st)^*$ then $(te)^+\leq s^*$. 
		\item [(2)] Let $q,r,s,t \in S$ be such that $s\sim t$, $q\leq s$ and $r\leq t$. Then $q\sim r$.
		\end{enumerate} 
		\end{lemma}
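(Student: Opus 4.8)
Since the two assertions are unrelated, I would handle them one at a time.

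\emph{Part (1).} The plan is to extract the single identity $te = s^{*}te$ first; once that is in hand, the conclusion is immediate. To get it, I would rewrite the hypothesis using \eqref{eq:consequences}: as $(st)^{*}=(s^{*}t)^{*}$, the assumption $e\le (st)^{*}$ means $e=(s^{*}t)^{*}e$, so $te=t(s^{*}t)^{*}e$. Now the right-restriction axiom $y(xy)^{*}=x^{*}y$ from \eqref{eq:axioms:star}, applied with $x=s^{*}$ and $y=t$ and combined with $(s^{*})^{*}=s^{*}$, gives $t(s^{*}t)^{*}=s^{*}t$, hence $te=s^{*}te$. Applying $^{+}$ and invoking the rule $(es)^{+}=es^{+}$ of \eqref{eq:rule1} with the projection $e=s^{*}$, I obtain $(te)^{+}=(s^{*}\cdot te)^{+}=s^{*}(te)^{+}$, which is exactly $(te)^{+}\le s^{*}$. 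The only nonroutine point is realizing one should aim for $te=s^{*}te$; after that it is a direct reading of the axioms.

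\emph{Part (2).} Here the plan is to push the ``small'' projections $q^{*}r^{*}$ and $q^{+}r^{+}$ through the two compatibility identities for $s$ and $t$. Using the characterization of $\le$ in Lemma~\ref{lem:lem1}, write $q=sq^{*}=q^{+}s$ and $r=tr^{*}=r^{+}t$, and set $f=q^{*}r^{*}\in P(S)$. Since $^{*}$ is monotone (Lemma~\ref{lem:lem1} again), $q^{*}\le s^{*}$ and $r^{*}\le t^{*}$, so $f\le s^{*}$ and $f\le t^{*}$, i.e.\ $s^{*}f=f=t^{*}f$. Multiplying the compatibility identity $st^{*}=ts^{*}$ on the right by $f$ then yields $sf=tf$, whence $qr^{*}=s(q^{*}r^{*})=sf=tf=t(r^{*}q^{*})=rq^{*}$. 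Dually, putting $g=q^{+}r^{+}$ and using $q^{+}\le s^{+}$, $r^{+}\le t^{+}$, one gets $s^{+}g=g=t^{+}g$, and multiplying $t^{+}s=s^{+}t$ on the left by $g$ gives $gs=gt$, so $r^{+}q=(r^{+}q^{+})s=gs=gt=(q^{+}r^{+})t=q^{+}r$. Together these are precisely the defining equalities of $q\sim r$.

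I do not expect a genuine obstacle in either part: the whole argument of (2) is a short chain of substitutions from Lemma~\ref{lem:lem1} and the definition of compatibility, and the only moment requiring a small idea is spotting the identity $te=s^{*}te$ in (1).
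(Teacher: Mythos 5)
Your proposal is correct and follows essentially the same route as the paper: part (1) rests on the same key identity $t(st)^*=t(s^*t)^*=s^*t$ (you derive it directly from the axiom $y(xy)^*=x^*y$, the paper via $s^*t\leq t$ and Lemma~\ref{lem:lem1}), and part (2) is the paper's computation with the specific choices $e=q^*$, $f=r^*$. No gaps.
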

		
		\begin{proof} 
			 (1) Note that $s^*t\leq t$ yields $s^*t = t(s^*t)^* = t (st)^*$. Since $e\leq (st)^*$, we have $(te)^+ \leq (t(st^*))^+= (s^*t)^+\leq s^*$.
			
			(2) Since $q\leq s$ and $r\leq t$ there are $e,f\in P(S)$ such that $q=se$ and $r=tf$. Applying~\eqref{eq:rule1} and commuting projections we have:
			$$qr^*= (se)(tf)^* = set^*f = st^*ef.$$
			Dually, $rq^*=ts^*fe$. In view of $ts^*=st^*$ this yields $qr^*=rq^*$. In a similar way one shows that $r^+q=q^+r$.
		\end{proof}
		
		Let $\sigma$ denote the least congruence on a restriction semigroup $S$ that identifies all elements of $P(S)$.  Each of the following statements is equivalent to $s \, {\mathrel{\sigma}}\, t$  (see \cite[Lemma 1.2]{CG}):
		\begin{enumerate}
			\item[(i)] there is $e\in P(S)$ such that $es = et$; 
			\item[(ii)] there is $e \in P(S)$ such that $se = te$.
		\end{enumerate}
		Clearly, $s\sim t$ implies $s\mathrel{\sigma} t$.
		A restriction semigroup $S$ is called {\em proper} if the following two conditions hold: 
		$$ \text{for all } s,t\in S: \text{ if } s^* =t^* \text{ and } \, s \, {\mathrel{\sigma}} \, t \, \text{ then } s=t;
		$$
		$$ \text{for any } s,t\in S: \text{ if } s^+ =t^+ \text{ and } \,  s \, {\mathrel{\sigma}} \, t \, \text{ then } s=t.
		$$
		It is known that a restriction semigroup is proper if and only if $\sim \, = \sigma$. Indeed, suppose that $S$ is proper and let $s,t\in S$ be such that  $s \mathrel{\sigma} t$. Then $st^*\mathrel{\sigma} ts^*$ and $(st^*)^* = (ts^*)^*$. It follows that $st^*=ts^*$. Dually we have $t^+s=s^+t$ whence $s\sim t$. Conversely, suppose that $\sim \,= \sigma$ and that $s,t\in S$ are such that $s \mathrel{\sigma} t$ and $s^*=t^*$. Then $s\sim t$. From Lemma \ref{lem:lem1}\eqref{i:b8} we get $s=t$, so that $S$ is proper.
		
		If $(A, \leq)$ is a poset and $B\subseteq A$, the {\em downset} of $B$ or the {\em order ideal} generated by $B$ is the set
		$$
		B^{\downarrow} = \{a\in A\colon a\leq b \text{ for some } b\in B\}. 
		$$
		If $b\in A$ we write $b^{\downarrow}$ for $\{b\}^{\downarrow}$ and call this set the {\em principal order ideal} generated by $b$.
		
		Recall that the {\em Munn semigroup} $T_Y$ of a semilattice $Y$ is the inverse subsemigroup of ${\mathcal{I}}(Y)$ consisting of all order isomorphisms between principal order ideals of $Y$.  If $Y$ has a top element, $T_Y$ is a monoid whose identity is ${\mathrm{id}}_Y$, the identity map on $Y$.  
		
		The {\em Munn representation} of a restriction semigroup $S$ \cite{Jones16, Kud15, Kud19} is a morphism $\alpha\colon S\to T_{P(S)}$, $s\mapsto \alpha_s$, given, for each $s\in S$, by
		\begin{equation*}\label{eq:munn}
		\operatorname{\mathrm{dom}}\alpha_s = (s^*)^{\downarrow} \,  \text{ and } \, \alpha_s(e) = (se)^+, \, e\in \operatorname{\mathrm{dom}}\alpha_s.
		\end{equation*}
		
		This generalizes the Munn representation of an inverse semigroup \cite[Theorem 5.2.8]{Lawson:book}.
		
		\section{Premorphisms and partial actions of restriction semigroups}\label{s:premorphisms}
		\subsection{Premorphisms and their corresponding partial actions}
		\begin{definition} \label{def:pm} Let $S$ and $T$ be  restriction semigroups.
			A map $\varphi\colon S\to T$ is called a {\em premorphism} if the following conditions hold:
			\begin{enumerate}
				\item[{$\mathrm{(PM1)}$}] $\varphi(s)\varphi(t) \leq \varphi(st)$ for all $s,t\in S$;
				\item[{$\mathrm{(PM2)}$}] $\varphi(s)^* \leq \varphi(s^*)$ for all $s\in S$;
				\item[{$\mathrm{(PM3)}$}] $\varphi(s)^+ \leq \varphi(s^+)$ for all $s\in S$.
			\end{enumerate}
		\end{definition}
		
		Conditions (PM2) and (PM3) are included into the definition  because proper extensions of restriction semigroups induce maps satisfying all the conditions (PM1), (PM2) and (PM3) (see Proposition \ref{prop:psi_hat_tilde}). Note that  premorphisms between one-sided restriction semigroups that have been considered in the literature \cite{EQF05, Gomes06, GH09} are also required to satisfy (PM2) or (PM3).
		
		\begin{remark} Let $S$ be monoid and $T$ a restriction monoid.  If $\varphi\colon S\to T$ a map satisfying (PM1) and $\varphi(1)=1$ (such maps were considered in \cite{Kud19}),
			it clearly satisfies (PM2) and (PM3) and thus is a premorphism. Conversely, one can show that if $\varphi\colon S\to T$ is a premorphism and $T$ is generated by $\varphi(S)$ then $\varphi(1)=1$ also holds.
		\end{remark}
		
		\begin{lemma}\label{lem:proj_identity} Let $\varphi\colon S\to T$ be a premorphism and $e\in P(S)$. Then $\varphi(e)\in P(T)$. In particular, if $T={\mathcal{I}}(X)$ then $\varphi(e)(x) = x$ for all $e\in P(S)$ and $x\in X$.
		\end{lemma}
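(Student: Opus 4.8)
The plan is to derive the statement from condition (PM2) alone, together with the order characterization in Lemma~\ref{lem:lem1}\eqref{i:b2}; conditions (PM1) and (PM3) are not needed here, although (PM3) could be used symmetrically in place of (PM2).

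First I fix $e\in P(S)$, so that $e^*=e$, and apply (PM2) to $e$, obtaining $\varphi(e)^*\leq\varphi(e^*)=\varphi(e)$. Since $\varphi(e)^*$ is by definition a projection of $T$, it now suffices to promote this inequality to the equality $\varphi(e)=\varphi(e)^*$, whence $\varphi(e)\in P(T)$ follows immediately. To do this I would invoke Lemma~\ref{lem:lem1}\eqref{i:b2}: the relation $\varphi(e)^*\leq\varphi(e)$ means exactly $\varphi(e)^*=\varphi(e)\cdot(\varphi(e)^*)^*$. As $\varphi(e)^*\in P(T)$ and every projection $f$ satisfies $f^*=f$, we have $(\varphi(e)^*)^*=\varphi(e)^*$, so the right restriction axiom $xx^*=x$ from \eqref{eq:axioms:star} gives $\varphi(e)^*=\varphi(e)\varphi(e)^*=\varphi(e)$. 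Hence $\varphi(e)=\varphi(e)^*\in P(T)$.

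For the concluding assertion, recall that the semilattice of projections of ${\mathcal{I}}(X)$ consists precisely of the partial identities ${\mathrm{id}}_Y$ with $Y\subseteq X$; therefore $\varphi(e)$ is the identity map on its domain, that is, $\varphi(e)(x)=x$ for every $x$ in the domain of $\varphi(e)$, as claimed. I do not anticipate a genuine obstacle: the only point requiring a moment's care is the passage from the inequality supplied by (PM2) to the corresponding equality, and that is handled cleanly by the description of the natural partial order in Lemma~\ref{lem:lem1}.
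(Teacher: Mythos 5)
Your proof is correct and follows essentially the same route as the paper: apply (PM2) to get $\varphi(e)^*\leq\varphi(e)$, then compute $\varphi(e)^*=\varphi(e)(\varphi(e)^*)^*=\varphi(e)\varphi(e)^*=\varphi(e)$, so $\varphi(e)\in P(T)$. Your explicit appeal to Lemma~\ref{lem:lem1}\eqref{i:b2} just makes transparent a step the paper leaves implicit, and your reading of the final assertion (identity on the domain of $\varphi(e)$) is the intended one.
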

		
		\begin{proof} From (PM2) we have $\varphi(e)^*\leq \varphi(e)$ whence $\varphi(e)^* = \varphi(e)(\varphi(e)^*)^* = \varphi(e)\varphi(e)^* = \varphi(e)$.  It follows that $\varphi(e)\in P(T)$.
		\end{proof}
		
		\begin{lemma} \label{lem:prem_compatibility} Let $\varphi\colon S\to T$ be a premorphism. Then $s\sim t$ in $S$ implies $\varphi(s)\sim \varphi(t)$ in~$T$. 
		\end{lemma}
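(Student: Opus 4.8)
The plan is to verify directly the two defining equalities of the compatibility relation for the images $\varphi(s)$ and $\varphi(t)$, namely $\varphi(s)\varphi(t)^*=\varphi(t)\varphi(s)^*$ and $\varphi(t)^+\varphi(s)=\varphi(s)^+\varphi(t)$. I would treat the first one and obtain the second by the dual argument. First, using (PM2) we have $\varphi(t)^*\le\varphi(t^*)$, so by Lemma~\ref{lem:lem1}\eqref{i:b5} and then (PM1) we get $\varphi(s)\varphi(t)^*\le\varphi(s)\varphi(t^*)\le\varphi(st^*)$; symmetrically $\varphi(t)\varphi(s)^*\le\varphi(ts^*)$. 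Since $s\sim t$ gives $st^*=ts^*$, both elements $\varphi(s)\varphi(t)^*$ and $\varphi(t)\varphi(s)^*$ lie below the single element $\varphi(st^*)=\varphi(ts^*)$ of $T$, so by Lemma~\ref{lem:lem1}\eqref{i:b62} they are compatible.

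Next I would compute their $^*$-projections. By Lemma~\ref{lem:proj_identity} the elements $\varphi(t)^*$ and $\varphi(s)^*$ lie in $P(T)$, so identity \eqref{eq:rule1} applied in $T$ yields $(\varphi(s)\varphi(t)^*)^*=\varphi(s)^*\varphi(t)^*$ and $(\varphi(t)\varphi(s)^*)^*=\varphi(t)^*\varphi(s)^*$; these coincide because projections commute. Having established that $\varphi(s)\varphi(t)^*$ and $\varphi(t)\varphi(s)^*$ are compatible and share the same $^*$-projection, Lemma~\ref{lem:lem1}\eqref{i:b8} forces $\varphi(s)\varphi(t)^*=\varphi(t)\varphi(s)^*$.

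Finally, the equality $\varphi(t)^+\varphi(s)=\varphi(s)^+\varphi(t)$ follows by the entirely dual computation, using (PM3) in place of (PM2), the $^+$-parts of \eqref{eq:rule1} and Lemma~\ref{lem:lem1}\eqref{i:b6}, and again Lemma~\ref{lem:lem1}\eqref{i:b62} and \eqref{i:b8}. There is no genuine obstacle here; the only points needing care are to invoke Lemma~\ref{lem:proj_identity} so that \eqref{eq:rule1} is applicable to the images of projections, and to use Lemma~\ref{lem:lem1}\eqref{i:b8} as the mechanism that upgrades ``compatible with equal projections'' to ``equal'', which is precisely where conditions (PM2) and (PM3) (rather than mere order-preservation) are used.
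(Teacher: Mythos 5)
Your proof is correct and follows essentially the same route as the paper: bound both $\varphi(s)\varphi(t)^*$ and $\varphi(t)\varphi(s)^*$ above by $\varphi(st^*)=\varphi(ts^*)$ using (PM2) and (PM1), deduce compatibility from having a common upper bound, and then conclude equality from equal $^*$-projections via Lemma~\ref{lem:lem1}\eqref{i:b8}, with the dual argument for the $^+$-condition. The only cosmetic remark is that you do not need Lemma~\ref{lem:proj_identity} to place $\varphi(t)^*$ and $\varphi(s)^*$ in $P(T)$ --- any element of the form $a^*$ is a projection by definition --- so \eqref{eq:rule1} applies directly.
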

		
		\begin{proof} Let $s\sim t$ in $S$. Then $\varphi(s)\varphi(t)^*\leq \varphi(s)\varphi(t^*)\leq \varphi(st^*)$. From $st^*=ts^*$ we have that $\varphi(t)\varphi(s)^*\leq \varphi(ts^*)=\varphi(st^*)$. It follows that $\varphi(s)\varphi(t)^* \sim \varphi(t)\varphi(s)^*$. But since $(\varphi(s)\varphi(t)^*)^* = (\varphi(t)\varphi(s)^*)^*$ we have $\varphi(s)\varphi(t)^* = \varphi(t)\varphi(s)^*$. Similarly one shows that $\varphi(s)^+\varphi(t) = \varphi(t)^+\varphi(s)$.
		\end{proof}
		
		Recall that a premorphism $\varphi\colon S\to T$ between inverse semigroups is known \cite{KL, LMS, MR77} to be a map satisfying  condition (PM1) and also the condition 
		\begin{enumerate}
			\item[(Inv)] $\varphi(s^{-1})=\varphi(s)^{-1}$ for all $s\in S$.
		\end{enumerate}
		
		\begin{remark}\label{rem:inverse} Let $S$ and $T$ be inverse semigroups. Then, under the presence of (PM1) and (Inv), conditions (PM2) and (PM3) hold automatically.  So a premorphism between inverse semigroups in the sense of \cite{KL, LMS,MR77} is a premorphism in the sense of Definition~\ref{def:pm}.
		\end{remark}
		
		Throughout the paper, by a premorphism between restriction semigroups and, in particular, between inverse semigroups we mean a map satisfying conditions (PM1), (PM2) and (PM3). If $\varphi\colon S\to T$ is a premorphism between inverse semigroups satisfying also condition (Inv), we call it an {\em inverse premorphism}. Remark \ref{rem:inverse} tells us that inverse premorphisms between inverse semigroups are precisely the ones considered in the literature \cite{KL, LMS, MR77}. Note that premorphisms between inverse semigroups (in the sense of Definition \ref{def:pm}) are not required to satisfy condition (Inv) and, as can be shown, do not necessarily satisfy it (in contrast to the situation with morphisms, see Remark \ref{rem:morphisms_inv}).
		
		Let $S$ be a restriction semigroup and $X$ a set. A {\em left partial action of} $S$ on $X$ {\em by partial bijections} (or just a {\em left partial action of} $S$ {\em on} $X$)  is a partially defined map $S\times X \to X$, $(s,x)\mapsto s\cdot x$, such that:
		
		\begin{enumerate} [(i)]
			\item if $x\neq y$ and $s\cdot x$ and $s\cdot y$ are defined then $s\cdot x \neq s\cdot y$, for all $s\in S$ and $x,y\in X$ (each $s\in S$ acts by a partial bijection);
			\item  if $s\cdot x$ and $t\cdot (s\cdot x)$ are defined then $ts\cdot x$ is defined and $t\cdot (s\cdot x) = ts\cdot x$, for all $s,t\in S$ and $x\in X$ (to match condition (PM1));
			\item  if $s\cdot x$ is defined then $s^*\cdot x$ is defined and $s^*\cdot x=x$ (to match condition (PM2));
			\item if $s\cdot x$ is defined then $s^+\cdot (s\cdot x)$ is defined (to match condition (PM3)).
		\end{enumerate}
		
		It is clear by (iii) that for $p\in P(S)$ if $p\cdot x$ is defined then $p\cdot x=x$. Furthermore, (ii) and (iv) imply that if $s\cdot x$ is defined then $s^+\cdot (s\cdot x) = s\cdot x$.
		
		A {\em right partial action} of $S$ on $X$ is defined dually. 
		Namely, a {\em right partial action of} $S$ on $X$ {\em by partial bijections} is a partially defined map $X\times S \to X$, $(x,s)\mapsto x\circ s$, such that:
		
		\begin{enumerate}[(i$'$)]
			\item if $x\neq y$ and $x\circ s$ and $y\circ s$ are defined then $x\circ s \neq y\circ s$, for all $s\in S$ and $x,y\in X$;
			\item if $x\circ s$ and $(x\circ s)\circ t$ are defined then $x\circ st$ is defined and $(x\circ s)\circ t = x\circ st$, for all $s,t\in S$ and $x\in X$;
			\item if $x\circ s$ then $x\circ s^+$ is defined and $x\circ s^+=x$;
			\item  if $x\circ s$ is defined then $(x\circ s)\circ s^*$ is defined. 
		\end{enumerate}

		If $S\times X \to X$, $(s,x)\mapsto s\cdot x$, is a left partial action of $S$ on $X$, for each $x\in X$ we define $\varphi_s\in {\mathcal{I}}(X)$ where 
		$$\operatorname{\mathrm{dom}}\varphi_s =\{x\in X\colon s\cdot x \, {\text{ is defined}}\}
		$$
		and $\varphi_s(x) = s\cdot x$  for all $x\in \operatorname{\mathrm{dom}}\varphi_s$. Then the map $\varphi\colon S\to {\mathcal{I}}(X)$, $s\mapsto \varphi_s$, is a premorphism.
		Conversely, given a premorphism $\varphi\colon S\to {\mathcal{I}}(X)$, $s\mapsto \varphi_s$, it determines a left partial action $S\times X \to X$ so that $s\cdot x$ is defined if and only if $x\in \operatorname{\mathrm{dom}}\varphi_s$ and in the latter case $s\cdot x = \varphi_s(x)$. We have described a one-to-one correspondence between left partial actions of $S$ on $X$ and premorphisms $S\to {\mathcal{I}}(X)$. 
		
		A left partial action $S\times X \to X$, $(s,x)\mapsto s\cdot x$,  of $S$ on $X$ determines a right partial action $X\times S \to X$, $(x,s)\mapsto x\circ s$, of $S$ on $X$ as follows. Let $s\in S$ and $x\in X$. Then
		$$
		x\circ s {\text{ is defined if and only if }} x= s\cdot y {\text{ for some }} y\in X {\text{ in which case }} x\circ s =y.
		$$
		This establishes a one-to-one correspondence between left partial actions and right partial actions of $S$ on $X$. 
		Note  that $x\circ s$ is defined if and only $x\in \operatorname{\mathrm{ran}}\varphi_s = \operatorname{\mathrm{dom}}\varphi_s^{-1}$ and $x\circ s = \varphi_s^{-1}(x)$, for any $s\in S$ and $x\in X$. From this, it is easily seen that, e.g., $(s\cdot x)\circ s$ is defined if and only if $s\cdot x$ is defined and in the latter case $(s\cdot x)\circ s = \varphi_s^{-1}\varphi_s(x)=x$.
		
		Throughout the paper, we work with premorphisms rather than with partial actions. However, all classes of premorphisms under consideration can be interpreted as suitable classes of partial actions.

		\subsection{Strong and order-preserving premorphisms.}
		
		It is natural to look for a class of premorphisms $\varphi\colon S\to T$ between restriction semigroups which, when specialized to the case where both $S$ and $T$ are inverse semigroups, coincides with inverse premorphisms.  Since we are mostly interested in premorphisms that arise from partial actions by partial bijections, we focus on premorphisms from a restriction semigroup to an inverse semigroup.

		A condition, satisfied by a premorphism $\varphi\colon S\to T$, where $S$ is a group and $T$ a monoid, equivalent to (Inv) and expressed in the signature of one-sided restriction semigroups,  was found in \cite{H}, and respective partial actions had been introduced prior to that in \cite{MS04}. Such premorphisms were termed {\em strong} in \cite{H}, see also \cite{GH09}. Here we adapt these notions to the setting of premorphisms between (two-sided) restriction semigroups.

		\begin{definition} A premorphism $\varphi\colon S\to T$ between restriction semigroups is called {\em strong} if it satisfies the following conditions:
			\begin{enumerate}
				\item[${\mathrm{(Sr)}}$] $\varphi(s)\varphi(t) = \varphi(st)\varphi(t)^*$ for all $s,t\in S$;
				\item[${\mathrm{(Sl)}}$] $\varphi(s)\varphi(t) = \varphi(s)^+\varphi(st)$ for all $s,t\in S$.
			\end{enumerate}
		\end{definition}
		
		\begin{proposition} \label{prop:group_inv} Let $S,T$ be inverse semigroups and $\varphi\colon S\to T$ a premorphism.
			The following statements are equivalent:
			\begin{enumerate}[(1)]
				\item $\varphi$ satisfies condition (Sr);
				\item $\varphi$ satisfies condition (Sl);
				\item $\varphi$ is strong.
			\end{enumerate}
		\end{proposition}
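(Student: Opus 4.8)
The plan is to show that conditions (Sr) and (Sl) each already force $\varphi$ to satisfy (Inv), and then to observe that, in the presence of (Inv), inverting one of these identities in $T$ produces the other; since a premorphism is strong exactly when it satisfies both (Sr) and (Sl), this yields the three-way equivalence. The implications (3)$\Rightarrow$(1) and (3)$\Rightarrow$(2) are immediate from the definition.

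The heart of the matter is the implication (Sr)$\Rightarrow$(Inv). First I would record the identity $\varphi(s)\varphi(s^*)=\varphi(s)$, which holds for \emph{any} premorphism between inverse semigroups: by (PM1) its left side is $\le\varphi(ss^*)=\varphi(s)$, and since $\varphi(s^*)\ge\varphi(s)^*$ by (PM2) and left multiplication preserves the natural partial order, $\varphi(s)\varphi(s^*)\ge\varphi(s)\varphi(s)^*=\varphi(s)$. Now assume (Sr). Right-multiplying it by $\varphi(t)^{-1}$ and simplifying $\varphi(t)^*\varphi(t)^{-1}=\varphi(t)^{-1}$ yields $\varphi(s)\varphi(t)^+=\varphi(st)\varphi(t)^{-1}$, and applying the inversion of $T$ to this identity (using that $\varphi(t)^+$ is idempotent, hence self-inverse) yields $\varphi(t)^+\varphi(s)^{-1}=\varphi(t)\varphi(st)^{-1}$. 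Setting $s\mapsto s^{-1}$ and $t\mapsto s$ in this last identity, and using Lemma~\ref{lem:proj_identity} (so that $\varphi(s^*)$ is an idempotent of $T$, whence $\varphi(s^*)^{-1}=\varphi(s^*)$) together with the recorded identity, I obtain $\varphi(s)^+\varphi(s^{-1})^{-1}=\varphi(s)$. Replacing $s$ by $s^{-1}$ and then inverting turns this into $\varphi(s^{-1})^{-1}=\varphi(s)\varphi(s^{-1})^+$; substituting this back into $\varphi(s)^+\varphi(s^{-1})^{-1}=\varphi(s)$ and using $\varphi(s)^+\varphi(s)=\varphi(s)$ gives $\varphi(s)=\varphi(s)\varphi(s^{-1})^+$, hence $\varphi(s)^*\le\varphi(s^{-1})^+$. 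Finally, applying $^*$ to the identity $\varphi(s^{-1})^{-1}=\varphi(s)\varphi(s^{-1})^+$ and using this inequality gives $\varphi(s^{-1})^+=\varphi(s)^*$, and therefore $\varphi(s^{-1})^{-1}=\varphi(s)\varphi(s^{-1})^+=\varphi(s)\varphi(s)^*=\varphi(s)$, i.e.\ $\varphi$ satisfies (Inv).

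Once (Inv) holds, (Sl) follows from (Sr) by inverting in $T$: the inverse of the right-hand side of (Sr) is $\varphi(t)^*\varphi(st)^{-1}$, and, using (Inv), $\varphi(s)^{-1}=\varphi(s^{-1})$, $\varphi(t)^{-1}=\varphi(t^{-1})$, $\varphi(st)^{-1}=\varphi(t^{-1}s^{-1})$ and $\varphi(t)^*=\varphi(t^{-1})^+$, so the inverted identity reads $\varphi(t^{-1})\varphi(s^{-1})=\varphi(t^{-1})^+\varphi(t^{-1}s^{-1})$, which after the change of variables $u=t^{-1}$, $v=s^{-1}$ is precisely (Sl). Hence (Sr)$\Rightarrow$(3). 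The implication (Sl)$\Rightarrow$(3) is obtained by applying everything above to the premorphism $\varphi^{\mathrm{op}}\colon S^{\mathrm{op}}\to T^{\mathrm{op}}$ (one checks directly that $\varphi^{\mathrm{op}}$ satisfies (PM1), (PM2), (PM3), and that its versions of (Sr), (Sl), (Inv) correspond, respectively, to (Sl), (Sr), (Inv) for $\varphi$), or alternatively by repeating the preceding argument with left and right interchanged. Together with the trivial implications, this proves that (1), (2) and (3) are equivalent.

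The step I expect to be the main obstacle is the chain of identities in the second paragraph that extracts (Inv) from (Sr): it is a sequence of exact computations in the inverse semigroup $T$, whose two essential ingredients are the auxiliary identity $\varphi(s)\varphi(s^*)=\varphi(s)$ (which uses only the premorphism axioms) and the specialization of the inverted form of (Sr) to the pair $(s^{-1},s)$.
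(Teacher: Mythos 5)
Your proof is correct and follows essentially the same route as the paper: establish (Sr)$\Rightarrow$(Inv) by specializing to the pair $(s^{-1},s)$, then obtain (Sl) by inverting the identity (Sr), with the other direction handled dually. The only difference is that your extraction of (Inv) is more circuitous than necessary — plugging $(s^{-1},s)$ directly into (Sr) gives $\varphi(s^{-1})\varphi(s)=\varphi(s^*)\varphi(s)^*=\varphi(s)^*=\varphi(s)^{-1}\varphi(s)$, whence $\varphi(s)\varphi(s^{-1})\varphi(s)=\varphi(s)$ and, after replacing $s$ by $s^{-1}$, uniqueness of inverses yields (Inv) immediately.
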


		\begin{proof} Since the implications (3) $\Rightarrow$ (1) and  (3) $\Rightarrow$ (2) are obvious, it is enough to prove only the implications (1) $\Rightarrow$ (3) and  (2) $\Rightarrow$ (3). We will prove the first of them, the second one being proved similarly.
			By (Sr) and (PM2) for any $s\in S$ we have $\varphi(s^{-1})\varphi(s)=\varphi(s^*)\varphi(s)^*= \varphi(s)^*=\varphi(s)^{-1}\varphi(s)$. Multiplying this by $\varphi(s)$ on the left we obtain $\varphi(s)\varphi(s^{-1})\varphi(s)=\varphi(s)$.  Replacing $s$ by $s^{-1}$ we get $\varphi(s^{-1})\varphi(s)\varphi(s^{-1})=\varphi(s^{-1})$ whence $\varphi(s^{-1})=\varphi(s)^{-1}$. Finally, taking inverses of both sides of (Sr) and replacing $t^{-1}$ by $s$ and $s^{-1}$ by $t$ we get (Sl).
		\end{proof}

		\begin{definition} A premorphism $\varphi\colon S\to T$ between restriction semigroups is called {\em order-preserving} if the following condition holds:
			\begin{enumerate}
				\item[(OP)] $s\leq t$ implies $\varphi(s)\leq\varphi(t)$, for all $s,t\in S$. 
			\end{enumerate}
		\end{definition} 
		
		Note that a morphism, as well as a premorphism from a monoid, is automatically order-preserving.
		In view of Remark \ref{rem:inverse}, the following is an immediate consequence of Proposition \ref{prop:group_inv} and a result proved in~\cite[Proposition 8.7]{Kud19}.
		
		\begin{theorem}\label{th:strong_order}
			Let $\varphi\colon S\to T$ be a premorphism between inverse semigroups. Then each of the conditions of Proposition \ref{prop:group_inv} is equivalent to $\varphi$ being inverse and order-preserving. 
		\end{theorem}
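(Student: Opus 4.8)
The plan is to exploit Proposition~\ref{prop:group_inv}, which tells us that conditions (1), (2) and (3) of that proposition are all equivalent; hence it suffices to prove that, for a premorphism $\varphi\colon S\to T$ between inverse semigroups, $\varphi$ is strong if and only if $\varphi$ is inverse and order-preserving. Here one should keep Remark~\ref{rem:inverse} in mind: since for a map satisfying (PM1) and (Inv) conditions (PM2) and (PM3) come for free, ``$\varphi$ is inverse'' (i.e.\ $\varphi$ satisfies (Inv)) is exactly the classical notion of a premorphism between inverse semigroups, so the statement to be proved is precisely the equivalence between ``strong'' (in the two-sided restriction sense) and ``order-preserving'' for classical inverse-semigroup premorphisms.

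For the implication from strong to inverse, nothing new is needed: the first lines of the proof of Proposition~\ref{prop:group_inv} already show that (Sr) forces $\varphi(s^{-1})=\varphi(s)^{-1}$ for every $s\in S$. For the implication from strong to order-preserving I would argue directly. Let $s\le t$; by Lemma~\ref{lem:lem1}\eqref{i:b2} this means $s=ts^*$. By Lemma~\ref{lem:proj_identity} both $\varphi(s^*)$ and $\varphi(s)^*$ lie in $P(T)$, and (PM2) gives $\varphi(s)^*\le\varphi(s^*)$; absorbing idempotents then yields $\varphi(s)=\varphi(s)\varphi(s)^*=\varphi(s)\varphi(s^*)$. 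Now apply (Sr) to the pair $(t,s^*)$: since $\varphi(s^*)^*=\varphi(s^*)$ we get $\varphi(t)\varphi(s^*)=\varphi(ts^*)\varphi(s^*)^*=\varphi(s)\varphi(s^*)=\varphi(s)$, so $\varphi(s)=\varphi(t)\varphi(s^*)$ with $\varphi(s^*)\in P(T)$, and Lemma~\ref{lem:lem1}\eqref{i:b1} gives $\varphi(s)\le\varphi(t)$.

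The converse — that an order-preserving inverse premorphism between inverse semigroups is strong — is where I would invoke the external input: it is the content of \cite[Proposition~8.7]{Kud19}, and by the reconciliation of the two premorphism notions discussed above this applies verbatim. Combining Proposition~\ref{prop:group_inv}, the two short arguments above, and \cite[Proposition~8.7]{Kud19} then yields the theorem. I expect the only real point requiring care to be bookkeeping: making sure the hypotheses under which \cite[Proposition~8.7]{Kud19} is stated match ``inverse and order-preserving'' as used here, which is exactly what Remark~\ref{rem:inverse} guarantees; the remaining manipulations are routine semilattice arithmetic together with Lemma~\ref{lem:lem1}.
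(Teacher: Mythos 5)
Your proposal is correct and follows essentially the same route as the paper, which derives the theorem directly from Proposition~\ref{prop:group_inv}, Remark~\ref{rem:inverse} and \cite[Proposition~8.7]{Kud19}. The extra direct arguments you supply for the forward direction (strong implies inverse, via the computation in the proof of Proposition~\ref{prop:group_inv}, and strong implies order-preserving, via (Sr) applied to $(t,s^*)$) are sound and in fact coincide with the argument the paper later gives in the more general setting of Theorem~\ref{th:strong_restr}(1).
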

		
		The condition of being order-preserving in the statement above is not redundant (for an example of an inverse but not order-preserving premorphism see \cite[page~216]{LMS}).
		
		\begin{corollary} \label{cor:strong_groups} A premorphism $\varphi\colon S\to T$, where $S$ is a group and $T$ an inverse semigroup  is strong if and only if it is inverse.
		\end{corollary}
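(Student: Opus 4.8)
The plan is to deduce this immediately from Theorem~\ref{th:strong_order}. A group $S$ is in particular a reduced restriction semigroup (with $P(S)=\{1\}$) and hence an inverse semigroup, so together with the hypothesis that $T$ is inverse, Theorem~\ref{th:strong_order} applies to $\varphi$ and says that $\varphi$ is strong if and only if it is inverse and order-preserving. Thus the whole content of the corollary reduces to the observation that, for a group domain, the order-preserving condition is automatic and may therefore be dropped from the equivalence.

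First I would record that the natural partial order on a group is trivial. Indeed, if $s\leq t$ in $S$, then by Lemma~\ref{lem:lem1}\eqref{i:b1} we have $s=tf$ for some $f\in P(S)$, and since $P(S)=\{1\}$ this forces $s=t$. Consequently every map out of $S$, and in particular $\varphi$, vacuously satisfies condition (OP); this is precisely the already-noted fact that a premorphism from a monoid is automatically order-preserving.

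Combining the two observations concludes the proof: $\varphi$ is strong $\iff$ $\varphi$ is inverse and order-preserving $\iff$ $\varphi$ is inverse. There is no real obstacle here, as all the substance lies in Proposition~\ref{prop:group_inv} and Theorem~\ref{th:strong_order}; the only point specific to this corollary is the triviality of $\leq$ on a group. (Alternatively, one could argue directly from Proposition~\ref{prop:group_inv}, since strong is equivalent to (Sr) and the computation in its proof derives (Inv) from (Sr) and conversely; but routing through Theorem~\ref{th:strong_order} is the shortest path.)
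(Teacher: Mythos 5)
Your proof is correct and follows exactly the route the paper intends: the corollary is stated immediately after the remark that a premorphism from a monoid is automatically order-preserving, so the paper's (implicit) argument is precisely your combination of Theorem~\ref{th:strong_order} with the triviality of the natural partial order on a group. Nothing further is needed.
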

		
		Note that strong premorphisms in the context of one-sided restriction semigroups appear, explicitely or implicitely, in the study of prefix expansions of unipotent monoids~\cite{FGG99}, generalized prefix expansion of weakly left ample semigroups \cite{Gomes06} and partial actions of left restriction semigroups \cite{GH09}. In addition, a specialization of the main construction of \cite{Kud19} to strong premorphisms coincides with the well-known Birget-Rhodes prefix expansion of a group (see \cite[Corollary 8.11]{Kud19}).
		
		\subsection{Locally strong premorphisms}\label{subs:locally_strong}

		Let $\varphi\colon S\to T$ be a premorphism between restriction semigroups.
		We introduce the following   conditions:
		
		\vspace{0.1cm}
		
		\begin{enumerate}
			\item[(LSr)] $\varphi(st^+)\varphi(t) = \varphi(st)\varphi(t)^*$ for all $s,t\in S$;
			\item[(LSl)] $\varphi(s)\varphi(s^*t) = \varphi(s)^+\varphi(st)$ for all $s,t\in S$.
		\end{enumerate}
		
		We now observe that these conditions admit  the following `local' equivalents:
		
		\begin{enumerate}
			\item[(LSr$'$)] $\varphi(s)\varphi(t) = \varphi(st)\varphi(t)^*$ for all  $s,t\in S$  such that  $s^* \leq t^+$;
			\item[(LSl$'$)] $\varphi(s)\varphi(t) = \varphi(s)^+\varphi(st)$ for all $s,t\in S$ such that  $t^+ \leq s^*$.
		\end{enumerate}

		\begin{lemma}\label{prop:inverse} Conditions (LSr) and (LSl) are equivalent to conditions (LSr\,$'$\!) and (LSl\,$'$\!),  respectively.
		\end{lemma}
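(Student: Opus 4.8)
The plan is to establish the two equivalences separately, each by a symmetric back-and-forth argument that uses only the identities \eqref{eq:axioms:plus}, \eqref{eq:axioms:star} and \eqref{eq:rule1}, together with the elementary observation (contained in Lemma~\ref{lem:lem1}) that for projections $e,f\in P(S)$ one has $e\le f$ if and only if $ef=e$. None of (PM1)--(PM3) is needed.

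First I would prove (LSr) $\Leftrightarrow$ (LSr$'$). For ``$\Rightarrow$'', assuming (LSr) and taking $s,t\in S$ with $s^*\le t^+$, one has $s^*t^+=s^*$, hence $st^+=ss^*t^+=ss^*=s$, so that (LSr) for this pair reads precisely $\varphi(s)\varphi(t)=\varphi(st)\varphi(t)^*$, which is (LSr$'$). For ``$\Leftarrow$'', assuming (LSr$'$), given arbitrary $s,t\in S$ I would apply (LSr$'$) to the pair $(st^+,t)$: by \eqref{eq:rule1} we have $(st^+)^*=s^*t^+\le t^+$, so the side condition holds, and since $st^+t=st$ the resulting equality is $\varphi(st^+)\varphi(t)=\varphi(st)\varphi(t)^*$, i.e.\ (LSr).

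Then I would obtain (LSl) $\Leftrightarrow$ (LSl$'$) by the dual argument, this time substituting $s^*t$ for $t$: from \eqref{eq:rule1} one has $(s^*t)^+=s^*t^+\le s^*$, while $ss^*t=st$, and when $t^+\le s^*$ one gets $s^*t=s^*t^+t=t^+t=t$. I do not expect any real obstacle here: the argument is a routine manipulation of the restriction-semigroup axioms, the only slightly delicate points being the choice of the two substitutions $s\mapsto st^+$ and $t\mapsto s^*t$ and the verification, via \eqref{eq:rule1}, that the substituted elements satisfy the required side conditions $(\cdot)^*\le t^+$, respectively $(\cdot)^+\le s^*$; the remaining cancellations $st^+t=st$ and $ss^*t=st$ are immediate from \eqref{eq:axioms:plus} and \eqref{eq:axioms:star}.
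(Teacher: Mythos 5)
Your proposal is correct and follows essentially the same route as the paper: the forward directions reduce to noting that $s^*\le t^+$ forces $st^+=s$ (resp.\ $t^+\le s^*$ forces $s^*t=t$), and the converse directions apply (LSr$'$) to the pair $(st^+,t)$ (resp.\ (LSl$'$) to $(s,s^*t)$) after checking the side condition via \eqref{eq:rule1}. The only difference is that you spell out the substitution computations that the paper labels ``immediate.''
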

		
		\begin{proof} (1) It is immediate that (LSr) implies (LSr$'$).  Conversely, putting, for $s,t\in T$, $u=st^+$, we get $u^*\leq t^+$. Applying
			(LSr$'$) to $u$ and $t$ we obtain (LSr). The other equivalence is proved dually. 
		\end{proof}
		
		\begin{definition} A premorphism $\varphi\colon S\to T$ between restriction semigroups will be called 
			{\em locally strong}, if it satisfies conditions (LSr) and (LSl).
		\end{definition}

		It is immediate that conditions  (LSr$'$) and  (LSl$'$), respectively, imply conditions 
		\begin{enumerate}
			\item[(LSr$''$)] $\varphi(s)\varphi(t) = \varphi(st)\varphi(t)^*$ for all  $s,t\in S$  such that  $s^* = t^+$;
			\item[(LSl$''$)] $\varphi(s)\varphi(t) = \varphi(s)^+\varphi(st)$ for all $s,t\in S$ such that  $t^+ = s^*$.
		\end{enumerate}
		\begin{question}\label{question1} {\em Are conditions (LSr$'$) and  (LSl$'$) equivalent to conditions (LSr$''$) and  (LSl$''$), respectively?}
		\end{question}
		
		The following statement implies that, whenever $S$ and $T$ are inverse semigroups, the answer to Question \ref{question1} is affirmative.

		\begin{proposition} Let $\varphi\colon S\to T$ be a premorphism between inverse semigroups. Then each of the conditions (LSr\,$'$\!),  (LSl\,$'$\!),  (LSr\,$''$\!) and  (LSl\,$''$\!) is equivalent to (Inv). Consequently, $\varphi$ is locally strong if and only if it is inverse. \end{proposition}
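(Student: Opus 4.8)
The plan is to prove all four equivalences at once by closing, for the right-handed conditions, the cycle
\[
(\mathrm{Inv})\ \Rightarrow\ (\mathrm{LSr}')\ \Rightarrow\ (\mathrm{LSr}'')\ \Rightarrow\ (\mathrm{Inv}),
\]
and, symmetrically, $(\mathrm{Inv})\Rightarrow(\mathrm{LSl}')\Rightarrow(\mathrm{LSl}'')\Rightarrow(\mathrm{Inv})$; in each chain the middle implication is trivial, a primed condition being the special case of the corresponding double-primed one in which $s^*=t^+$ replaces $s^*\leq t^+$. The final sentence of the statement is then immediate from Lemma~\ref{prop:inverse}: $\varphi$ is locally strong precisely when it satisfies $(\mathrm{LSr}')$ and $(\mathrm{LSl}')$, hence precisely when it is inverse. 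A useful preliminary remark is that one half of $(\mathrm{LSr}')$ is automatic: by (PM1) we have $\varphi(s)\varphi(t)\leq\varphi(st)$, and multiplying on the right by $\varphi(t)^*\in P(T)$, using Lemma~\ref{lem:lem1}\eqref{i:b5} and $\varphi(t)\varphi(t)^*=\varphi(t)$, gives $\varphi(s)\varphi(t)\leq\varphi(st)\varphi(t)^*$; dually $\varphi(s)\varphi(t)\leq\varphi(s)^+\varphi(st)$. Thus in $(\mathrm{LSr}')$ and $(\mathrm{LSl}')$ only the reverse inequalities require work.

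For $(\mathrm{Inv})\Rightarrow(\mathrm{LSr}')$ I would take $s,t\in S$ with $s^*\leq t^+$, note that then $st^+=s$, and apply (PM1) to $st$ and $t^{-1}$ (rather than to $s$ and $t$): this yields $\varphi(st)\varphi(t^{-1})\leq\varphi(stt^{-1})=\varphi(st^+)=\varphi(s)$. Multiplying by $\varphi(t)$ on the right (Lemma~\ref{lem:lem1}\eqref{i:b5}) and rewriting $\varphi(t^{-1})\varphi(t)=\varphi(t)^{-1}\varphi(t)=\varphi(t)^*$ (the only point where (Inv) enters) gives $\varphi(st)\varphi(t)^*\leq\varphi(s)\varphi(t)$, which, with the automatic inequality, is $(\mathrm{LSr}')$. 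The implication $(\mathrm{Inv})\Rightarrow(\mathrm{LSl}')$ is dual: for $t^+\leq s^*$ one has $s^*t=t$, so (PM1) applied to $s^{-1}$ and $st$ gives $\varphi(s^{-1})\varphi(st)\leq\varphi(s^*t)=\varphi(t)$, and left multiplication by $\varphi(s)$ together with $\varphi(s)\varphi(s^{-1})=\varphi(s)^+$ supplies the reverse inequality.

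For $(\mathrm{LSr}'')\Rightarrow(\mathrm{Inv})$ the idea is to feed $(\mathrm{LSr}'')$ the pair $(s^{-1},s)$, admissible because $(s^{-1})^*=ss^{-1}=s^+$; this gives $\varphi(s^{-1})\varphi(s)=\varphi(s^{-1}s)\varphi(s)^*=\varphi(s^*)\varphi(s)^*$, and the right-hand side equals $\varphi(s)^*$ since $\varphi(s)^*\leq\varphi(s^*)$ by (PM2) and both lie in the semilattice $P(T)$. From $\varphi(s^{-1})\varphi(s)=\varphi(s)^*$ the rest is exactly the computation in the proof of Proposition~\ref{prop:group_inv}: multiplying by $\varphi(s)$ on the left yields $\varphi(s)\varphi(s^{-1})\varphi(s)=\varphi(s)\varphi(s)^*=\varphi(s)$, and replacing $s$ by $s^{-1}$ yields $\varphi(s^{-1})\varphi(s)\varphi(s^{-1})=\varphi(s^{-1})$, so $\varphi(s^{-1})=\varphi(s)^{-1}$ by uniqueness of inverses in the inverse semigroup $T$. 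Dually, $(\mathrm{LSl}'')\Rightarrow(\mathrm{Inv})$ follows by applying $(\mathrm{LSl}'')$ to $(s,s^{-1})$ (admissible since $(s^{-1})^+=s^{-1}s=s^*$) and using (PM3) to reduce $\varphi(s)\varphi(s^{-1})=\varphi(s)^+\varphi(s^+)$ to $\varphi(s)^+$, after which the same two-step argument gives (Inv).

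I expect the main obstacle to be conceptual rather than computational. One has to realize that $(\mathrm{LSr}')$ is genuinely weaker than being strong, and that the reverse inequality it requires is obtained by applying (PM1) to the shifted pair $(st,t^{-1})$ and then post-multiplying by $\varphi(t)$, rather than by any direct manipulation of $\varphi(s)\varphi(t)$. After that, every step is a routine use of the identities of Section~\ref{s:prelim}, of Lemma~\ref{lem:lem1}, and, on the reverse implications, of the standard inverse-premorphism computation already appearing in Proposition~\ref{prop:group_inv}; the hypotheses $s^*\leq t^+$ and $t^+\leq s^*$ serve only to collapse $\varphi(stt^{-1})$ and $\varphi(s^{-1}st)$ to $\varphi(s)$ and $\varphi(t)$, respectively.
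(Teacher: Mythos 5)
Your proposal is correct and follows essentially the same route as the paper: the cycle $(\mathrm{Inv})\Rightarrow(\mathrm{LSr}')\Rightarrow(\mathrm{LSr}'')\Rightarrow(\mathrm{Inv})$ and its dual, with the forward step obtained by applying (PM1) to $\varphi(st)\varphi(t^{-1})\varphi(t)$ and the reverse step by feeding $(s^{-1},s)$ into $(\mathrm{LSr}'')$ and reusing the computation from Proposition~\ref{prop:group_inv}. The only cosmetic difference is that you split the forward implication into an automatic inequality plus its reverse, where the paper writes it as a single sandwich chain of inequalities beginning and ending at $\varphi(s)\varphi(t)$.
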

		
		\begin{proof}  We prove the implications (Inv) $\Rightarrow$  (LSr$'$) $\Rightarrow$  (LSr$''$) $\Rightarrow$ (Inv). We first assume that $\varphi$ satisfies~(Inv). Then, for all  $s,t\in S$ such that  $s^*\leq t^+$, applying (PM1), we have:
			\begin{multline*}
			\varphi(s)\varphi(t)=\varphi(s)\varphi(t)\varphi(t)^* \leq \varphi(st)\varphi(t)^* = \varphi(st)\varphi(t)^{-1} \varphi(t) = \varphi(st)\varphi(t^{-1}) \varphi(t)  \\
			\leq  \varphi(stt^{-1})\varphi(t) = \varphi(s)\varphi(t).
			\end{multline*}
			It follows that (LSr$'$) holds. The implication (LSr$'$) $\Rightarrow$  (LSr$''$) is trivial.
			
			(LSr$''$) $\Rightarrow$ (Inv) We now assume that  (LSr$''$) holds.  For each $s\in S$ we have:
			$$
			\varphi(s)\varphi(s^{-1})\varphi(s) = \varphi(s) \varphi(s^{-1}s)\varphi(s)^*=\varphi(s)\varphi(s)^*=\varphi(s).
			$$
			Replacing $s$ by $s^{-1}$, we obtain $\varphi(s^{-1})\varphi(s)\varphi(s^{-1}) = \varphi(s^{-1})$. It follows that $\varphi(s^{-1})=\varphi(s)^{-1}$, by the uniqueness of the inverse element.
			
			Similarly, one proves the implications (Inv) $\Rightarrow$ (LSl$'$) $\Rightarrow$  (LSl$''$) $\Rightarrow$ (Inv).  In view of Lemma~\ref{prop:inverse}, this completes the proof.
		\end{proof}
		
		\begin{remark} Let $S$ be a reduced restriction semigroup. Since the order on $S$ is trivial, a premorphism $\varphi\colon S\to T$ is locally strong if and only if it is strong.
		\end{remark}

		\begin{proposition} \label{prop:prop_strong}
			Let $\varphi\colon S\to T$ be a map satisfying (PM1), (LSl\,$'$\!) and (LSr\,$'$\!)  where $S$ is a restriction semigroup and $T$ an inverse semigroup. Then $\varphi$ is a premorphism.
		\end{proposition}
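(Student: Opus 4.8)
The plan is to verify that $\varphi$ satisfies (PM2) and (PM3), condition (PM1) being part of the hypotheses. I shall freely use that, regarding the inverse semigroup $T$ as a restriction semigroup, one has $a^*=a^{-1}a$ and $a^+=aa^{-1}$ with $aa^{-1}a=a$ for all $a\in T$, that $P(T)=E(T)$, and that on the semilattice of idempotents of $T$ the natural partial order is $e\le f\iff ef=e$ (see Lemma~\ref{lem:lem1}).

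First I would prove the two auxiliary identities
\[
\varphi(s)\varphi(s^*)=\varphi(s)\qquad\text{and}\qquad\varphi(s^+)\varphi(s)=\varphi(s)\qquad(s\in S).
\]
For the first, apply (LSl$'$) to the pair $(s,s^*)$: by \eqref{eq:axioms:common} we have $(s^*)^+=s^*$, so the side condition $t^+\le s^*$ is met, and since $ss^*=s$ we get $\varphi(s)\varphi(s^*)=\varphi(s)^+\varphi(ss^*)=\varphi(s)^+\varphi(s)=\varphi(s)\varphi(s)^{-1}\varphi(s)=\varphi(s)$. The second identity is obtained dually, applying (LSr$'$) to the pair $(s^+,s)$ and using $(s^+)^*=s^+$ and $s^+s=s$.

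Next, specializing the first identity to $s=e\in P(S)$ gives $\varphi(e)\varphi(e)=\varphi(e)$, so each $\varphi(e)$ is an idempotent, hence a projection, of $T$; in particular $\varphi(s^*),\varphi(s^+)\in E(T)$ for every $s$. Then, left-multiplying $\varphi(s)\varphi(s^*)=\varphi(s)$ by $\varphi(s)^{-1}$ yields $\varphi(s)^*\varphi(s^*)=\varphi(s)^*$; as $\varphi(s)^*$ and $\varphi(s^*)$ are commuting idempotents of $T$, this is precisely $\varphi(s)^*\le\varphi(s^*)$, i.e.\ (PM2). Symmetrically, right-multiplying $\varphi(s^+)\varphi(s)=\varphi(s)$ by $\varphi(s)^{-1}$ yields $\varphi(s^+)\varphi(s)^+=\varphi(s)^+$, hence $\varphi(s)^+\le\varphi(s^+)$, which is (PM3). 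Together with the assumed (PM1), this shows that $\varphi$ is a premorphism.

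I do not expect a genuine obstacle here; the computations are short and mechanical. The only spot needing care is checking that the pairs to which (LSl$'$) and (LSr$'$) are applied satisfy the respective side conditions, which each time reduces to one of the identities $(x^*)^+=x^*$, $(x^+)^*=x^+$ from \eqref{eq:axioms:common}.
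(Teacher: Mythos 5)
Your proof is correct and follows essentially the same route as the paper: both establish $\varphi(s)\varphi(s^*)=\varphi(s)$ via (LSl$'$) applied to the pair $(s,s^*)$, observe that each $\varphi(e)$ with $e\in P(S)$ is an idempotent of $T$, and then extract $\varphi(s)^*\leq\varphi(s^*)$ from that identity (the paper applies $^*$ to both sides and uses \eqref{eq:rule1}, you left-multiply by $\varphi(s)^{-1}$ — the same computation in an inverse semigroup), with (PM3) handled dually.
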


		\begin{proof} Let $e\in P(S)$. We first show that $\varphi(e)\in E(T)$.
			Since $e^*=e^+=e$, we have $\varphi(e)\varphi(e) = \varphi(e^2)\varphi(e)^* = \varphi(e)\varphi(e)^* = \varphi(e)$, so that $\varphi(e)\in E(T)$. 
			
			Let $s\in S$. Then $\varphi(s)\varphi(s^*) = \varphi(s)^+\varphi(ss^*) = \varphi(s)^+\varphi(s) = \varphi(s)$. It follows that
			$(\varphi(s)\varphi(s^*))^* = \varphi(s)^*$, that is, in view of \eqref{eq:rule1}, $\varphi(s)^*\varphi(s^*) = \varphi(s)^*$. Hence $\varphi(s)^*\leq \varphi(s^*)$. We have proved (PM2). Condition (PM3) is proved dually.
		\end{proof}
		
		Therefore, restricting one's attention to locally strong premorphisms from a restriction semigroup to an inverse semigroup, conditions (PM2) and (PM3) in the definition of a premorphism get redundant.
		
		The following result extends Theorem \ref{th:strong_order}.
		
		\begin{theorem}\label{th:strong_restr}
			Let $\varphi\colon S\to T$ be a premorphism between restriction semigroups. Then:
			\begin{enumerate}[(1)]
				\item $\varphi$ satisfies condition (Sr) if and only if it satisfies conditions (LSr) and (OP);
				\item $\varphi$ satisfies condition (Sl) if and only if it satisfies conditions (LSl) and (OP).
			\end{enumerate}
			Consequently, $\varphi$ is strong if and only if it is locally strong and  order-preserving.
			
		\end{theorem}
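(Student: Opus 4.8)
The plan is to prove part~(1); part~(2) is the left--right dual, and the final assertion is then purely formal: $\varphi$ is strong iff it satisfies (Sr) and (Sl), which by~(1) and~(2) happens iff it satisfies (LSr), (LSl) and (OP), i.e.\ iff it is locally strong and order-preserving. So everything comes down to the equivalence $\text{(Sr)}\iff\text{(LSr)}\wedge\text{(OP)}$.

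\emph{From (Sr) to (LSr) and (OP).} That (Sr) implies (LSr) is immediate: apply (Sr) to the pair $(st^+,t)$ and use $t^+t=t$. For (OP), assume $s\le t$. By Lemma~\ref{lem:lem1}\eqref{i:b2} we have $s=ts^*$, so applying (Sr) to $(t,s^*)$ and recalling that $\varphi(s^*)\in P(T)$ by Lemma~\ref{lem:proj_identity} (hence $\varphi(s^*)^*=\varphi(s^*)$) gives $\varphi(t)\varphi(s^*)=\varphi(ts^*)\varphi(s^*)^*=\varphi(s)\varphi(s^*)$. Now I would show $\varphi(s)\varphi(s^*)=\varphi(s)$: by (PM2) one has $\varphi(s)^*\le\varphi(s^*)$ in $P(T)$, so $\varphi(s)^*\varphi(s^*)=\varphi(s)^*$, whence $\varphi(s)\varphi(s^*)=\varphi(s)\varphi(s)^*\varphi(s^*)=\varphi(s)\varphi(s)^*=\varphi(s)$ using the axiom $xx^*=x$. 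Therefore $\varphi(s)=\varphi(t)\varphi(s^*)$ with $\varphi(s^*)\in P(T)$, so $\varphi(s)\le\varphi(t)$ by Lemma~\ref{lem:lem1}\eqref{i:b1}.

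\emph{From (LSr) and (OP) to (Sr).} Here I would prove $\varphi(s)\varphi(t)=\varphi(st)\varphi(t)^*$ by establishing the two inequalities and invoking antisymmetry of the natural partial order. For ``$\le$'': (PM1) gives $\varphi(s)\varphi(t)\le\varphi(st)$, and right-multiplying by $\varphi(t)^*$ (Lemma~\ref{lem:lem1}\eqref{i:b5}) together with $\varphi(t)\varphi(t)^*=\varphi(t)$ yields $\varphi(s)\varphi(t)\le\varphi(st)\varphi(t)^*$. For ``$\ge$'': since $t^+\in P(S)$ we have $st^+\le s$ by Lemma~\ref{lem:lem1}\eqref{i:b1}, so (OP) gives $\varphi(st^+)\le\varphi(s)$ and hence $\varphi(st^+)\varphi(t)\le\varphi(s)\varphi(t)$ by Lemma~\ref{lem:lem1}\eqref{i:b5}; but by (LSr) the left-hand side equals $\varphi(st)\varphi(t)^*$, so $\varphi(st)\varphi(t)^*\le\varphi(s)\varphi(t)$. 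This proves (Sr).

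The proof of (2) runs the same way, with $^*$ and right multiplication replaced by $^+$ and left multiplication, (PM2) by (PM3), the relation $s=ts^*$ by $s=s^+t$, and $t^+t=t$ by $ss^*=s$; the auxiliary inequalities $s^*t\le t$ and $\varphi(s)=\varphi(s^+)\varphi(t)\le\varphi(t)$ needed there follow from \eqref{eq:mov_proj} and Lemma~\ref{lem:lem1}\eqref{i:b1}. I do not expect a genuine obstacle here; the one place that requires care is the implication $\text{(Sr)}\Rightarrow\text{(OP)}$, specifically the collapse $\varphi(s)\varphi(s^*)=\varphi(s)$, which is exactly where hypothesis (PM2) enters the argument --- consistent with the earlier remark that (PM2) and (PM3) are deliberately built into the definition of a premorphism --- together with the observation that the converse direction is cleanest when handled as a two-sided estimate rather than by a direct equational manipulation.
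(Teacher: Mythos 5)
Your proposal is correct and follows essentially the same route as the paper: for $\text{(Sr)}\Rightarrow\text{(OP)}$ both arguments apply (Sr) to the pair $(t,s^*)$ and use (PM2) to collapse $\varphi(s)\varphi(s^*)$ to $\varphi(s)$, and for the converse both establish the sandwich $\varphi(s)\varphi(t)\le\varphi(st)\varphi(t)^*=\varphi(st^+)\varphi(t)\le\varphi(s)\varphi(t)$. The only (harmless) cosmetic differences are your use of Lemma~\ref{lem:proj_identity} to replace $\varphi(s^*)^*$ by $\varphi(s^*)$ and of Lemma~\ref{lem:lem1}(1) to conclude $\varphi(s)\le\varphi(t)$ directly.
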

		
		\begin{proof} (1) We assume that $\varphi$  satisfies (Sr). Then it clearly satisfies (LSr) and we show that it satisfies (OP). Let $s\leq t$ in $S$. Then $s=ts^*$. We have:
			\begin{equation}\label{eq:mar6a}
			\varphi(t)\varphi(s^*) = \varphi(ts^*)\varphi(s^*)^* = \varphi(s)\varphi(s^*)^*.
			\end{equation}
			By  (PM2)  we have $\varphi(s^*)\varphi(s)^* = \varphi(s)^*$. It follows that
			\begin{equation}\label{eq:mar6b}
			\varphi(s)\varphi(s^*)^* = \varphi(s)\varphi(s^*) = \varphi(s)\varphi(s)^*\varphi(s^*) = \varphi(s)\varphi(s)^* = \varphi(s).
			\end{equation}
			From \eqref{eq:mar6a} and \eqref{eq:mar6b} we have $\varphi(t)\varphi(s^*) = \varphi(s)$ whence
			$$
			\varphi(t)\varphi(s)^* = \varphi(t)\varphi(s^*)\varphi(s)^* = \varphi(s)\varphi(s)^* = \varphi(s),
			$$ so that $\varphi(s)\leq \varphi(t)$.

			In the reverse direction, we assume that $\varphi$ satisfies (LSr) and (OP). For $s,t\in S$ we have:
			$$
			\varphi(s)\varphi(t) = \varphi(s)\varphi(t)\varphi(t)^* \leq \varphi(st)\varphi(t)^* = \varphi(st^+)\varphi(t) \leq \varphi(s)\varphi(t).
			$$
			It follows that $\varphi(s)\varphi(t) = \varphi(st)\varphi(t)^*$ so that condition (Sr) holds. 
			Part (2) is proved dually.
		\end{proof}
		\subsection{Locally multiplicative premorphisms}  Let $\varphi\colon S\to T$ be a premorphism between restriction semigroups and consider the following conditions:
		\begin{enumerate}
			\item[(M)] $\varphi(s)\varphi(t)=\varphi(st)$ for all $s,t\in S$;
			\item[(LM)] $\varphi(st^+)\varphi(s^*t) = \varphi(st)$ for all $s,t\in S$;
			\item[(LM$'$)] $\varphi(s)\varphi(t)=\varphi(st)$ for all $s,t\in S$ satisfying $s^*=t^+$.
		\end{enumerate}
		
		It is easily seen that conditions (LM) and (LM$'$) are equivalent, and that (M) implies~(LM).
		
		\begin{definition}\label{def:local_morphism} A premorphism $\varphi\colon S\to T$ between restriction semigroups is  called:
			\begin{itemize}
				\item {\em multiplicative} if it satisfies condition (M);
				\item {\em locally multiplicative} if it satisfies condition (LM).
			\end{itemize}
		\end{definition}

		Further, we consider the following conditions:
		\begin{enumerate}
			\item[(LMr)] $\varphi(st^+)\varphi(t)=\varphi(st)$ for all $s,t\in S$;
			\item[(LMl)] $\varphi(s)\varphi(s^*t) = \varphi(st)$ for all $s,t\in S$.
		\end{enumerate}

		\begin{proposition} \label{prop:local2} Let $\varphi\colon S\to T$ be a premorphism between restriction semigroups. The following statements are equivalent:
			\begin{enumerate}[(1)]
				\item $\varphi$ satisfies condition (M);
				\item $\varphi$ satisfies conditions (LM) and (OP);
				\item $\varphi$ satisfies conditions (LMr) and (OP);
				\item $\varphi$ satisfies conditions (LMl) and (OP).
			\end{enumerate}
		\end{proposition}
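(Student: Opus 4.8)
The plan is to treat the four conditions as a ``star'' with (1) at its centre: prove $(1)\Rightarrow(2)$, $(1)\Rightarrow(3)$, $(1)\Rightarrow(4)$, and then the three converses $(2)\Rightarrow(1)$, $(3)\Rightarrow(1)$, $(4)\Rightarrow(1)$. For the forward implications I would first note that (M) forces (OP): if $s\le t$ then $s=ts^*$ by Lemma~\ref{lem:lem1}, so (M) gives $\varphi(s)=\varphi(t)\varphi(s^*)$ with $\varphi(s^*)\in P(T)$ by Lemma~\ref{lem:proj_identity}, whence $\varphi(s)\le\varphi(t)$ again by Lemma~\ref{lem:lem1}. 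Then (LM), (LMr) and (LMl) follow from (M) via the identities $st^+s^*t=st$, $st^+t=st$ and $ss^*t=st$ respectively, each being a one-line manipulation with commuting projections and the axioms $x^+x=x$, $xx^*=x$ (that (M) implies (LM) is in any case already recorded in the text).

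The substance of the proposition lies in the converses, and all three run along the same short line. The observation to isolate first is that the two ``halves'' appearing in the local conditions sit below the obvious elements: $st^+\le s$ and $s^*t\le t$ --- the former because $st^+$ is $s$ times the projection $t^+$, the latter because $s^*t=t(s^*t)^*$ by \eqref{eq:mov_proj}. Granting this, assume (LM) and (OP); rewriting $\varphi(st)=\varphi(st^+)\varphi(s^*t)$, applying (OP) to get $\varphi(st^+)\le\varphi(s)$ and $\varphi(s^*t)\le\varphi(t)$, and using compatibility of the natural partial order with multiplication (Lemma~\ref{lem:lem1}), we obtain $\varphi(st)=\varphi(st^+)\varphi(s^*t)\le\varphi(s)\varphi(t)$. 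Since (PM1) always gives $\varphi(s)\varphi(t)\le\varphi(st)$, equality holds, which is (M). The implications $(3)\Rightarrow(1)$ and $(4)\Rightarrow(1)$ are the identical argument with (LMr), respectively (LMl), replacing (LM) and using only one of the two order estimates; since (3) and (4) are moreover dual, only one needs to be written out.

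I do not anticipate a genuine obstacle. The only mildly subtle point is the realisation that no $^*$ or $^+$ of a $\varphi$-image ever needs to be computed: monotonicity of multiplication together with (PM1) squeezes $\varphi(st)$ between two equal bounds. What requires a little care --- but is entirely routine given Section~\ref{s:prelim} --- are the order facts $st^+\le s$ and $s^*t\le t$ and the bookkeeping with commuting projections in $st^+s^*t=st$ and its companions.
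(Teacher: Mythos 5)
Your proposal is correct and follows essentially the same route as the paper: the forward implications derive (OP) from $s=ts^*$ together with Lemma~\ref{lem:proj_identity}, and each converse sandwiches $\varphi(st)$ between $\varphi(st^+)\varphi(\cdot)$ and itself using (OP), monotonicity of multiplication, and (PM1). The paper writes out only the cycle $(1)\Rightarrow(3)\Rightarrow(1)$ and declares the other two pairs similar, but the underlying argument is identical to yours.
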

		
		\begin{proof} We prove implications (1) $\Rightarrow$ (3) $\Rightarrow$ (1).
			
			(1) $\Rightarrow$ (3) If $\varphi$ satisfies condition (M), it clearly satisfies (LMr). We show that it satisfies (OP). Let $s\leq t$ in $S$. Then $s=ts^*$ whence
			$\varphi(s) = \varphi(t)\varphi(s^*) \leq \varphi(t)$, as needed.
			
			(3) $\Rightarrow$ (1) 
			We assume that $\varphi$ satisfies conditions (LMr) and (OP) and let $s,t\in S$.
			Then $\varphi(st) = \varphi(st^+)\varphi(t) \leq \varphi(s)\varphi(t) \leq \varphi(st)$. It follows that $\varphi(st) = \varphi(s)\varphi(t)$, as needed.
			
			Implications (1) $\Rightarrow$ (2) $\Rightarrow$ (1) and (1) $\Rightarrow$ (4) $\Rightarrow$ (1) are proved similarly.
		\end{proof}
		
		In Subsection \ref{subs:perfect} we will show that proper extensions related to multiplicative and locally multiplicative premorphisms generalize the almost perfect restriction semigroups from \cite{Jones16, Kud15}.
		
		\section{Structure of proper extensions of restriction semigroups}\label{s:structure}
		
		In this section we prove a structure result for proper extension of restriction semigroups in terms of partial actions that extends Cornock's and Gould's result on the structure of proper restriction semigroups \cite{CG} as well as O'Carroll's result on the structure of idempotent pure extensions of inverse semigroups \cite{OC77},  or, more precisely,  its partial action variant obtained by the second author in \cite{Kh17}.
		
		\subsection{Proper morphisms and proper extensions of restriction semigroups.}
		
		A morphism $\psi\colon S\to T$ between restriction semigroups is called {\em proper} if it is surjective and
		$$
		\psi(s)=\psi(t) \Rightarrow s\sim t, \, {\text{ for all}} \, s,t\in S.
		$$
		
		Recall that the generalized Green's relations ${\widetilde{\mathcal R}}$ and ${\widetilde{\mathcal L}}$ on a restriction semigroup $S$ are defined by $a \mathrel{\widetilde{\mathcal R}} b$ (respectively $a \mathrel{\widetilde{\mathcal L}} b$) if and only if $a^+=b^+$ (respectively $a^*=b^*$). 
		
		\begin{proposition} \label{prop:proper} A surjective morphism $\psi\colon S\to T$ between restriction semigroups is proper if and only if the restriction of $\psi$ to ${\widetilde{\mathcal R}}$- and ${\widetilde{\mathcal L}}$-classes is injective.
		\end{proposition}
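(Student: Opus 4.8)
The plan is to prove both implications directly, using the characterisation of $\sim$ via the unary operations together with Lemma~\ref{lem:lem1}\eqref{i:b8} and the identities \eqref{eq:rule1}.

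For the forward implication, I would assume $\psi$ is proper and check that its restriction to each $\widetilde{\mathcal L}$-class is injective (the $\widetilde{\mathcal R}$-case being symmetric). Take $s,t\in S$ with $s\mathrel{\widetilde{\mathcal L}}t$, i.e.\ $s^*=t^*$, and $\psi(s)=\psi(t)$. Properness yields $s\sim t$, and then Lemma~\ref{lem:lem1}\eqref{i:b8}, applied to $s\sim t$ with $s^*=t^*$, gives $s=t$; the $\widetilde{\mathcal R}$-case is the dual application of the same lemma.

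For the converse, I would assume the restriction of $\psi$ to $\widetilde{\mathcal R}$- and $\widetilde{\mathcal L}$-classes is injective, take $s,t\in S$ with $\psi(s)=\psi(t)$, and aim to show $st^*=ts^*$ and $t^+s=s^+t$. The key observation is that the auxiliary elements $st^*$ and $ts^*$ automatically lie in one $\widetilde{\mathcal L}$-class: by \eqref{eq:rule1}, $(st^*)^*=s^*t^*$ and $(ts^*)^*=t^*s^*$, which coincide since $P(S)$ is commutative. Since $\psi$ preserves $^*$ and $\psi(s)=\psi(t)$, a short computation gives $\psi(st^*)=\psi(s)\psi(s)^*=\psi(s)$ and likewise $\psi(ts^*)=\psi(t)\psi(s)^*=\psi(s)$, so $\psi(st^*)=\psi(ts^*)$. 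Injectivity on $\widetilde{\mathcal L}$-classes then forces $st^*=ts^*$. The dual argument — noting that $t^+s$ and $s^+t$ share the same $^+$, again by \eqref{eq:rule1} and commutativity of projections, and that $\psi(t^+s)=\psi(s)^+\psi(s)=\psi(s^+t)$ — gives $t^+s=s^+t$ by injectivity on $\widetilde{\mathcal R}$-classes. Hence $s\sim t$, so $\psi$ is proper.

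I do not expect a real obstacle: both directions are short and rely only on the defining identities of restriction semigroups (principally \eqref{eq:rule1} and commutativity of $P(S)$) and on Lemma~\ref{lem:lem1}\eqref{i:b8}. The only point needing a little care is verifying that $st^*,ts^*$ (resp.\ $t^+s,s^+t$) have equal $^*$ (resp.\ $^+$), so that the injectivity hypothesis is invoked on the appropriate Green-type class.
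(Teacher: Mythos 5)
Your proposal is correct and follows essentially the same route as the paper: the forward direction is the paper's explicit computation $s=s^+s=t^+s=s^+t=t$ packaged as an appeal to Lemma~\ref{lem:lem1}\eqref{i:b8}, and the converse is exactly the paper's argument of applying the injectivity hypothesis to the pairs $s^+t,\,t^+s$ (one $\widetilde{\mathcal R}$-class) and $st^*,\,ts^*$ (one $\widetilde{\mathcal L}$-class). No gaps.
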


		\begin{proof} Assume that $\psi$ is proper, $s \mathrel{{\widetilde{\mathcal R}}} t$ and $\psi(s)=\psi(t)$. Then $s\sim t$ and thus
			$s=s^+s=t^+s = s^+t=t$, so that the restriction of $\psi$ to an ${\widetilde{\mathcal R}}$-class is injective. The case of an ${\widetilde{\mathcal L}}$-class is dual.
			
			Suppose now that the restriction of $\psi$ to ${\widetilde{\mathcal R}}$- and ${\widetilde{\mathcal L}}$-classes is injective and let $\psi(s)=\psi(t)$. Then $\psi(s^+t) = \psi(t^+s)$. Since $s^+t \mathrel{{\widetilde{\mathcal R}}} t^+s$ it follows that $s^+t=t^+s$. Dually one proves that $st^*=ts^*$. Therefore, $s\sim t$, and $\psi$ is proper.
		\end{proof}
		
		\begin{corollary}
		If $T$ is a reduced restriction semigroup and $S$ is a proper restriction semigroup then a surjective morphism $\psi\colon S\to T$ is proper if and only if  ${\mathrm{ker}}(\psi)=\sigma$, in particular, $T\simeq S/\sigma$. 
		\end{corollary}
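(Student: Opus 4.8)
The plan is to leverage two facts recalled in Section~\ref{s:prelim}: first, that a restriction semigroup $S$ is proper exactly when ${\sim}={\sigma}$, so that here we may freely interchange $s\sim t$ with $s\mathrel{\sigma}t$; and second, that $\sigma$ is by definition the \emph{least} congruence on $S$ collapsing $P(S)$. With these in hand the argument splits into the two implications of the equivalence, plus the concluding isomorphism.

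First I would prove that if $\psi$ is proper then $\ker(\psi)=\sigma$. For the inclusion $\ker(\psi)\subseteq\sigma$: whenever $\psi(s)=\psi(t)$, properness gives $s\sim t$, hence $s\mathrel{\sigma}t$ since $S$ is proper. For the reverse inclusion $\sigma\subseteq\ker(\psi)$: because $\psi$ preserves $^*$ and $^+$ it maps $P(S)$ into $P(T)$ (Lemma~\ref{lem:proj_identity}), and since $T$ is reduced we have $|P(T)|=1$, so $\psi$ identifies all projections of $S$. Thus $\ker(\psi)$ is a restriction-semigroup congruence on $S$ collapsing $P(S)$, and minimality of $\sigma$ forces $\sigma\subseteq\ker(\psi)$. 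Together these give $\ker(\psi)=\sigma$.

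For the converse I would assume $\ker(\psi)=\sigma$. The map $\psi$ is surjective by hypothesis, and if $\psi(s)=\psi(t)$ then $s\mathrel{\sigma}t$, whence $s\sim t$ because $S$ is proper; so $\psi$ is proper. Finally, the statement $T\simeq S/\sigma$ is just the homomorphism theorem for the variety of restriction semigroups applied to the surjective $(2,1,1)$-morphism $\psi$ whose kernel congruence is $\sigma$.

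I do not anticipate any genuine obstacle: the result is essentially bookkeeping, combining the ``proper $\iff {\sim}={\sigma}$'' characterization with the universal property of $\sigma$. The only point deserving a word of justification is the containment $\psi(P(S))\subseteq P(T)$, which is immediate because $e=e^*$ for $e\in P(S)$ and $\psi$ commutes with $^*$, so $\psi(e)=\psi(e)^*\in P(T)$; it is precisely here that reducedness of $T$ enters, via $|P(T)|=1$.
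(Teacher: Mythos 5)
Your proof is correct and follows essentially the same route as the paper: minimality of $\sigma$ among congruences collapsing $P(S)$ gives $\sigma\subseteq\ker(\psi)$ (using $|P(T)|=1$), and properness handles the rest. The only cosmetic difference is in the inclusion $\ker(\psi)\subseteq\sigma$, where you invoke properness of $\psi$ directly together with the general implication $s\sim t\Rightarrow s\mathrel{\sigma}t$, whereas the paper routes through the injectivity of $\psi$ on $\widetilde{\mathcal L}$-classes to get $ab^*=ba^*$; both are immediate and equivalent in substance.
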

		\begin{proof} Suppose that $\psi\colon S\to T$ is proper. Since $\sigma$ is the minimum reduced congruence on $S$, we have $\sigma \subseteq  {\mathrm{ker}}(\psi)$.  Let $\psi(a) = \psi(b)$. Then $\psi(ab^*) =\psi(ba^*)$. Since also $ab^* \mathrel{{\widetilde{\mathcal L}}} ba^*$ it follows that $ab^*=ba^*$. Therefore $a \mathrel{\sigma} b$ and thus $\sigma =  {\mathrm{ker}}(\psi)$. On the other hand, because $\sigma = \, \sim$, the canonical projection morphism induced by $\sigma$ is proper. \end{proof}
		
		It follows that proper extensions of restriction semigroups generalize proper restriction semigroups.
		It is known and easily seen that a morphism between inverse semigroups is proper if and only if it is idempotent pure. Hence proper morphisms between restriction semigroups also generalize idempotent pure morphisms between inverse semigroups.
		
	For restriction semigroups, one can define a morphism $\psi\colon S\to T$ to be {\em projection pure} if $\varphi^{-1}(e) \subseteq P(S)$ whenever $e\in P(T)$. A proper morphism is necessarily projection pure, but the converse does not hold in general. It is a long established idea that the notion of projection pure morphisms is not the appropriate generalization of the notion of idempotent-pure morphisms between inverse semigroups. For example, the statement of Proposition \ref{prop:proper} does not hold if `proper' in its formulation is replaced by `projection pure' because, e.g., a projection pure morphism from a reduced restriction semigroup does not need to be injective (since a congruence on a monoid is not in general uniquely determined by the class of $1$).
		
		For further use, we record the following characterization of a proper morphism.
		
		\begin{lemma} \label{lem:proper_compatibility} Let $\psi\colon S\to T$ be a surjective morphism between restriction semigroups. Then $\psi$ is proper if and only if 
			$$s\sim t \, \Longleftrightarrow \, \psi(s)\sim \psi(t), \, {\text{ for all }} \, s,t\in S.$$
		\end{lemma}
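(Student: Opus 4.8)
The plan is to prove the two implications of the equivalence separately, since they are of quite different flavour. Assume first that $\psi$ is proper. One direction of the equivalence is immediate: as $\psi$ is a morphism, it preserves the multiplication and the operations ${}^*$ and ${}^+$, so applying $\psi$ to the equalities $st^*=ts^*$ and $t^+s=s^+t$ gives $\psi(s)\psi(t)^*=\psi(t)\psi(s)^*$ and $\psi(t)^+\psi(s)=\psi(s)^+\psi(t)$; hence $s\sim t$ implies $\psi(s)\sim\psi(t)$ (this is also a special case of Lemma~\ref{lem:prem_compatibility}, since every morphism is a premorphism). For the reverse direction, I would start from $\psi(s)\sim\psi(t)$ and use again that $\psi$ is a morphism to rewrite the two defining equalities as $\psi(st^*)=\psi(ts^*)$ and $\psi(t^+s)=\psi(s^+t)$; properness of $\psi$ then yields $st^*\sim ts^*$ and $t^+s\sim s^+t$.

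The key point is then to upgrade these compatibilities to equalities. I would observe, using~\eqref{eq:rule1} and the commutativity of $P(S)$, that $(st^*)^*=s^*t^*=(ts^*)^*$ and dually $(t^+s)^+=t^+s^+=(s^+t)^+$; that is, the two elements in each compatibility already agree in the relevant projection. Lemma~\ref{lem:lem1}\eqref{i:b8} then forces $st^*=ts^*$ and $t^+s=s^+t$, i.e. $s\sim t$. This step — turning $\psi(st^*)=\psi(ts^*)$ into the equality $st^*=ts^*$ rather than merely into compatibility — is the one place where a little care is needed; everything else is just bookkeeping with the $(2,1,1)$-morphism axioms.

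Conversely, assume that $s\sim t\Leftrightarrow\psi(s)\sim\psi(t)$ holds for all $s,t\in S$. Since $\psi$ is assumed surjective, it only remains to verify the defining property of a proper morphism, namely that $\psi(s)=\psi(t)$ implies $s\sim t$. But compatibility is reflexive (each element of a restriction semigroup is compatible with itself, by Lemma~\ref{lem:lem1}\eqref{i:b61} applied to $s\leq s$), so $\psi(s)=\psi(t)$ gives $\psi(s)\sim\psi(t)$, and the assumed equivalence delivers $s\sim t$. Hence $\psi$ is proper, which completes the plan.
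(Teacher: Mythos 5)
Your proof is correct and follows essentially the same route as the paper: both directions are handled by pushing the defining equalities of $\sim$ through the morphism and then using properness plus Lemma~\ref{lem:lem1}\eqref{i:b8} to upgrade $st^*\sim ts^*$ (with $(st^*)^*=(ts^*)^*$) to an equality. Your explicit treatment of the trivial converse (the equivalence implies properness, via reflexivity of $\sim$) and your citation of Lemma~\ref{lem:lem1}\eqref{i:b8} rather than the paper's (apparently misprinted) reference are both fine.
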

		
		\begin{proof} If $s\sim t$ then clearly $\psi(s)\sim \psi(t)$. We assume that $\psi(s)\sim \psi(t)$ and show that $s\sim t$. Indeed, $\psi(s)\psi(t)^* = \psi(t)\psi(s)^*$ which implies that
			$\psi(st^*) = \psi(ts^*)$. Because $\psi$ is proper this implies that $st^* \sim ts^*$. Therefore, in view of $(st^*)^* = (ts^*)^*$, we obtain $st^*=ts^*$ (applying Lemma \ref{lem:aux14}(1)). Similarly one shows that $t^+s=s^+t$.
			Hence $s\sim t$, as desired.
		\end{proof}
		
		An analogous notion of a proper extension for weakly left ample semigroups has been considered in \cite{Gomes05} and for a class of generalized left restriction semigroups ({\em glrac} semigroups) in \cite{BGG10}.
		
		\subsection{The partial action product of $Y$ by $S$}\label{subs:construction}
		Let $S$ be a restriction semigroup, $Y$ a semilattice, $q\colon Y\to P(S)$ a  morphism of semilattices and $\varphi\colon S\to {\mathcal{I}}(Y)$ a premorphism. We introduce the following conditions involving the triple $(\varphi, q, Y)$:
		\begin{enumerate}
			\item[(A1)]  for all $s\in S$: $\operatorname{\mathrm{dom}}\varphi_s$ and  $\operatorname{\mathrm{ran}}\varphi_s$ are order ideals;
			\item[(A2)] for all $s\in S$: $\varphi_s$ is an order isomorphism;
			\item[(A3)]  for all $e\in P(S)$:  $(q^{-1}(e))^{\downarrow} \subseteq \operatorname{\mathrm{dom}}{\varphi_e} \subseteq q^{-1}(e^{\downarrow})$;
			\item[(A4)] for all $s\in S$: $\operatorname{\mathrm{dom}}\varphi_s\cap \{y\in Y\colon q(y)= s^*\}\neq \varnothing$.
		\end{enumerate}
		
		Observe that (A1) and (A2) tell us that the image of $\varphi$ belongs to the semigroup $\Sigma(Y)$ of all order isomorphisms between order ideals of $Y$ \cite[VI.7.1]{Petrich}.
		
				Due to (PM2) and (PM3), the following condition is satisfied:
		
		\begin{enumerate}
			\item[(A5)] for all $s\in S$: $\operatorname{\mathrm{dom}}\varphi_s\subseteq \operatorname{\mathrm{dom}}\varphi_{s^*}$ and $\operatorname{\mathrm{ran}}\varphi_s\subseteq \operatorname{\mathrm{ran}}\varphi_{s^+}$.
		\end{enumerate}
		
		We also specify the following two conditions that will arise in Subsection \ref{subs:underlying}:
		
		\begin{enumerate}
			\item[(A3a)]   For all $s\in S$:  $\operatorname{\mathrm{dom}}{\varphi_s}= \{y\in Y\colon y\in \operatorname{\mathrm{dom}}{\varphi_t}\cap q^{-1}(t^*) \text{ for some } t\leq s\}$;
			\item[(A3b)]   For all $s\in S$:  $\operatorname{\mathrm{dom}}{\varphi_s}= (\operatorname{\mathrm{dom}}{\varphi_s}\cap q^{-1}(s^*))^{\downarrow}$.
		\end{enumerate}

		Suppose that the triple $(\varphi, q, Y)$ satisfies conditions (A1)--(A4). It follows from condition (A3) that $q^{-1}(e)\subseteq \operatorname{\mathrm{dom}}{\varphi_e}$ for all $e\in P(S)$. Condition (A4) implies that $q$ is surjective. If, in addition, (A3a) holds, then, for any $s,t\in S$ such that $s\leq t$ we have $\operatorname{\mathrm{dom}}{\varphi_s} \subseteq \operatorname{\mathrm{dom}}{\varphi_t}$. This, in view of Lemma \ref{lem:prem_compatibility}  and Lemma \ref{lem:lem1}\eqref{i:b7}, implies that $\varphi_s \leq \varphi_t$ whence $\varphi$ is order-preserving. The remaining claims of the following are easy to prove. 
		
		\begin{lemma} \label{lem:prop_hat_tilde1} Suppose that the triple $(\varphi, q, Y)$ satisfies conditions (A1)--(A4). 
			\begin{enumerate}[(1)]
				\item If, in addition, $(\varphi, q, Y)$ satisfies (A3a), then for all $e\in P(S)$: $\operatorname{\mathrm{dom}}\varphi_e = q^{-1} (e^{\downarrow})$ and, moreover, $\varphi$ is order-preserving.
				\item If, in addition, $(\varphi, q, Y)$ satisfies (A3b), then for all $e\in P(S)$: $\operatorname{\mathrm{dom}}{\varphi_e}= (q^{-1}(e))^{\downarrow}$.
			\end{enumerate} 
		\end{lemma}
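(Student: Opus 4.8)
The plan is to obtain both equalities from condition (A3), which already furnishes, for every $e\in P(S)$, the chain $(q^{-1}(e))^{\downarrow}\subseteq\operatorname{\mathrm{dom}}\varphi_e\subseteq q^{-1}(e^{\downarrow})$, by invoking the relevant extra hypothesis. Hence for part~(1) only the inclusion $q^{-1}(e^{\downarrow})\subseteq\operatorname{\mathrm{dom}}\varphi_e$ remains to be shown, and for part~(2) only $\operatorname{\mathrm{dom}}\varphi_e\subseteq(q^{-1}(e))^{\downarrow}$. The order-preserving part of~(1) has already been recorded in the paragraph preceding the statement: from (A3a) one gets $\operatorname{\mathrm{dom}}\varphi_s\subseteq\operatorname{\mathrm{dom}}\varphi_t$ whenever $s\leq t$, then $s\sim t$ (Lemma~\ref{lem:lem1}\eqref{i:b61}) gives $\varphi_s\sim\varphi_t$ via Lemma~\ref{lem:prem_compatibility}, and Lemma~\ref{lem:lem1}\eqref{i:b7} yields $\varphi_s\leq\varphi_t$; I would simply refer back to this.

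For part~(1) I would take $y\in q^{-1}(e^{\downarrow})$, so that $q(y)\leq e$, and put $t:=q(y)\in P(S)$. Then $t\leq e$ in $S$ and, $t$ being a projection, $t^*=t=q(y)$, so $y\in q^{-1}(t^*)$; moreover, applying (A3) with $t$ in the role of $e$ gives $q^{-1}(t)\subseteq(q^{-1}(t))^{\downarrow}\subseteq\operatorname{\mathrm{dom}}\varphi_t$, whence $y\in\operatorname{\mathrm{dom}}\varphi_t$. Thus $t\leq e$ witnesses, through (A3a), that $y\in\operatorname{\mathrm{dom}}\varphi_e$; together with (A3) this gives $\operatorname{\mathrm{dom}}\varphi_e=q^{-1}(e^{\downarrow})$. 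For part~(2) I would specialize (A3b) to $s=e$, which, since $e^*=e$, reads $\operatorname{\mathrm{dom}}\varphi_e=(\operatorname{\mathrm{dom}}\varphi_e\cap q^{-1}(e))^{\downarrow}$; as $\operatorname{\mathrm{dom}}\varphi_e\cap q^{-1}(e)\subseteq q^{-1}(e)$, passing to downward closures yields $\operatorname{\mathrm{dom}}\varphi_e\subseteq(q^{-1}(e))^{\downarrow}$, and combining this with the reverse inclusion from (A3) we obtain $\operatorname{\mathrm{dom}}\varphi_e=(q^{-1}(e))^{\downarrow}$.

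I do not anticipate any genuine obstacle, since the statement is essentially a formal consequence of the axioms. The only point requiring a moment's care is, in part~(1), choosing the right element $t=q(y)$ to feed into (A3a), together with the observation that a projection satisfies $t^*=t$ and that (A3) applied to $t$ already forces $q^{-1}(t)\subseteq\operatorname{\mathrm{dom}}\varphi_t$, so that $y$ indeed lies in $\operatorname{\mathrm{dom}}\varphi_t\cap q^{-1}(t^*)$.
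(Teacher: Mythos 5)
Your proof is correct and follows exactly the route the paper intends: the paper proves the order-preservation claim in the paragraph preceding the lemma and declares the two domain equalities ``easy to prove,'' and your arguments (feeding $t=q(y)$ into (A3a) for part~(1), specializing (A3b) to $s=e$ for part~(2), each combined with the inclusion already supplied by (A3)) are the natural way to fill that in. No gaps.
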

		
		We will  need the following observation.
		
		\begin{lemma}\label{lem:q} Suppose that the triple $(\varphi, q, Y)$ satisfies conditions (A1)--(A4), $s\in S$ and $y\in \operatorname{\mathrm{dom}}\varphi_s$. Then:
			\begin{enumerate}[(1)]
				\item\label{i:lem:q1}  $q(\varphi_s(y)) = (sq(y))^+$;
				\item\label{i:lem:q2} $q(y) = (q(\varphi_s(y))s)^*$.
			\end{enumerate}
		\end{lemma}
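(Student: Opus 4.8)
The plan is to work entirely inside the Munn-type machinery supplied by conditions (A1)--(A4). First I would record what these conditions give us about a single element $y\in\operatorname{\mathrm{dom}}\varphi_s$: by (A5), since $\operatorname{\mathrm{dom}}\varphi_s\subseteq\operatorname{\mathrm{dom}}\varphi_{s^*}$ and $s^*\in P(S)$, condition (A3) applies to $s^*$ and forces $q(y)\le s^*$ (the right-hand containment $\operatorname{\mathrm{dom}}\varphi_{s^*}\subseteq q^{-1}((s^*)^{\downarrow})$ is exactly this). Dually, applying the range half of (A5) together with (A3) at the projection $s^+$ to the element $\varphi_s(y)\in\operatorname{\mathrm{ran}}\varphi_s$ gives $q(\varphi_s(y))\le s^+$. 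These two order estimates are the backbone of both identities, because an element bounded above by $s^*$ (resp.\ $s^+$) is acted on by $\varphi_{s^*}$ (resp.\ $\varphi_{s^+}$) and, via the premorphism conditions matching the Munn representation, its image can be computed as $(s\,q(y))^+$ and $(q(\varphi_s(y))\,s)^*$.

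For part~\eqref{i:lem:q1}, I would argue as follows. Set $e=q(y)$, so $e\le s^*$, hence by \eqref{eq:rule1} $(se)^* = s^*e = e$, i.e.\ $se\le s$, and $(se)^+\le s^+$ by Lemma~\ref{lem:lem1}\eqref{i:b6}. The idea is that $\varphi$ restricted to the principal ideal $e^{\downarrow}$ should behave like the Munn map $\alpha_{se}$: sending $e$ to $(se)^+$. Concretely, using (PM1)/(PM3)-type reasoning one shows $\varphi_{se}(e)$ is defined, equals $\varphi_s(y)$ (because $\varphi_s\varphi_e\le\varphi_{se}$ and $\varphi_e$ fixes $y$ by Lemma~\ref{lem:proj_identity}; the reverse inclusion needs $(se)^*=e$ so that $\varphi_{se}$ has domain inside $e^{\downarrow}$, using (A3) at the projection $e$). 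Then $q(\varphi_s(y)) = q(\varphi_{se}(e))$, and since $se\le s$ has $(se)^+\le s^+$ while $q(\varphi_{se}(e))\le (se)^+$ by the range estimate above applied to $se$, a dimension/order-isomorphism count through (A2) pins down $q(\varphi_{se}(e))=(se)^+=(sq(y))^+$. In other words, $\varphi_{se}$ is an order isomorphism $e^{\downarrow}\to (q(\varphi_s(y)))^{\downarrow}$, and separately $\varphi_{se}$ maps into $((se)^+)^{\downarrow}$; comparing top elements gives the equality.

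Part~\eqref{i:lem:q2} is the mirror image: with $f=q(\varphi_s(y))$ we have $f\le s^+$, hence $(fs)^+=fs^+=f$, so $fs\le s$ and $(fs)^*\le s^*$; moreover $\varphi_{fs}$ is (after the analogous identification) the inverse order isomorphism taking $f$ back to $y$, and by the same argument applied on the domain side $q(y)=q(\varphi_{fs}(f)\circ\text{nothing})=(fs)^*=(q(\varphi_s(y))\,s)^*$. I would simply say ``by a dual argument'' once part~(1) is written out carefully. The main obstacle I anticipate is the bookkeeping in identifying $\varphi_s|_{e^{\downarrow}}$ with $\varphi_{se}$: one must verify both that $\varphi_{se}(e)$ is defined with the right value and that $\operatorname{\mathrm{dom}}\varphi_{se}$ is contained in (hence, by the order-isomorphism condition (A1)--(A2) and the estimate $(se)^*=e$, equal to) $e^{\downarrow}$, and this is where (A3) applied at the projection $e\in P(S)$, together with (PM1), (PM2) and Lemma~\ref{lem:lem1}, all have to be invoked in the correct order. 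Everything else is routine order arithmetic with the identities \eqref{eq:mov_proj}--\eqref{eq:consequences}.
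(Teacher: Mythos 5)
Your setup is sound, and you correctly derive one of the two key estimates: from $\varphi_{se}(y)=\varphi_s(y)$ together with (A5) and (A3) you get $f:=q(\varphi_s(y))\le (se)^+$ where $e=q(y)$, which is exactly the paper's first inequality. The problem is the step that upgrades this inequality to an equality. You appeal to ``a dimension/order-isomorphism count through (A2)'' and to ``comparing top elements'', asserting that $\varphi_{se}$ is an order isomorphism $e^{\downarrow}\to f^{\downarrow}$. This is not even well-typed: $\varphi_{se}$ is a partial bijection of $Y$, whereas $e^{\downarrow}$ and $f^{\downarrow}$ are subsets of $P(S)$ (the same confusion produces ``$\varphi_{se}(e)$'' where you mean $\varphi_{se}(y)$). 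Read through $q$, the claim is unjustified: (A3) and (A5) only give $q(\operatorname{\mathrm{dom}}\varphi_{se})\subseteq e^{\downarrow}$ and $q(\operatorname{\mathrm{ran}}\varphi_{se})\subseteq ((se)^+)^{\downarrow}$, and there is no reason offered why the image of the particular element $y$ should attain the top of the latter ideal --- that is precisely the statement to be proved. A semilattice morphism $q\colon Y\to P(S)$ carries no ``dimension'' that forces the maximum to be hit, so the reverse inequality $f\ge (se)^+$ does not follow from anything you have written.

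The actual argument needs both dual estimates at once: besides $f\le (se)^+$ one shows $e\le (fs)^*$ (run your argument backwards: $\varphi_f$ fixes $\varphi_s(y)$ by (A3) and Lemma~\ref{lem:proj_identity}, so $y\in\operatorname{\mathrm{dom}}\varphi_f\varphi_s\subseteq\operatorname{\mathrm{dom}}\varphi_{fs}\subseteq\operatorname{\mathrm{dom}}\varphi_{(fs)^*}$, whence $q(y)\le (fs)^*$ by (A3)). The two inequalities are then combined via the squeeze $fs\le (se)^+s=se\le s(fs)^*=fs$ (using \eqref{eq:mov_proj}), which forces $fs=se$; from this $f=fs^+=(fs)^+=(se)^+$ and $e=(fs)^*$ follow, i.e.\ both parts of the lemma simultaneously. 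The two claims are genuinely coupled and cannot be obtained independently, one inequality apiece, as your plan attempts; deferring the dual estimate to part~(2) ``by a dual argument'' leaves each half incomplete. The obstacle you flag at the end (identifying $\varphi_s|_{e^{\downarrow}}$ with $\varphi_{se}$) is not the real difficulty --- that bookkeeping is fine; the missing ingredient is the second inequality and the squeeze that ties the two together.
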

		
		\begin{proof}  Let $y\in \operatorname{\mathrm{dom}}\varphi_s$. Put $e=q(y)$ and $f=q(\varphi_s(y))$. Because $y\in \operatorname{\mathrm{dom}}\varphi_e$ with $\varphi_e(y) = y$ (due to (A3)) and
			$\varphi_s\varphi_e\leq \varphi_{se}$, we have that $y\in \operatorname{\mathrm{dom}}\varphi_{se}$ and thus $\varphi_{se}(y) = \varphi_s(y)$. It follows that $\varphi_s(y)\in \operatorname{\mathrm{ran}}\varphi_{se}$, whence $\varphi_s(y)\in \operatorname{\mathrm{ran}}\varphi_{(se)^+}$, by (A5). Due to (A3) this yields 
			\begin{equation}\label{eq:aux11}
			f\leq (se)^+.
			\end{equation}
			
			Similarly one shows that 
			\begin{equation}\label{eq:aux12}
			e\leq (fs)^*.
			\end{equation}
			
			Then 
			\begin{align*}
			fs & \leq (se)^+s = se & (\text{by } \eqref{eq:aux11} \text{ and } \eqref{eq:mov_proj})\\
			&  \leq s(fs)^* = fs & (\text{by } \eqref{eq:aux12} \text{ and } \eqref{eq:mov_proj})
			\end{align*}
			and therefore $fs=se$. It follows that 
			\begin{align*}
			f & = fs^+  & (\text{by } \eqref{eq:aux11} \text{ and since } (se)^+\leq s^+)\\
			&  = (fs)^+=(se)^+ & (\text{by } \eqref{eq:aux12} \text{ and since } fs=se)
			\end{align*}
			and similarly $e=(fs)^*$, which completes the proof.
			\end{proof}
		
		Suppose that the triple $(\varphi, q, Y)$ satisfies conditions (A1)--(A4). The {\em partial action product} $Y\rtimes^q_{\varphi} S$ of $Y$ by $S$ with respect to $\varphi$ and $q$ is the  algebra
		$(Y\rtimes^q_{\varphi} S; \cdot\, , ^*, +)$ of signature $(2,1,1)$ defined as follows. Its underlying set is
		$$
		Y\rtimes^q_{\varphi} S = \{(y,s)\in Y\times S \colon  y\in \operatorname{\mathrm{ran}}\varphi_s \,\, {\text{and}} \,\, q(y)= s^+\};
		$$
		the multiplication (denoted by juxtaposition) is given by
		\begin{align}\label{product-in-Y-rt-S}
		(y,s)(x,t) = (\varphi_s(\varphi_s^{-1}(y)\wedge x), st);
		\end{align}
		and the unary operations $^*$ and $^+$ are given by
		\begin{equation} \label{eq:def_star_plus}
		(y,s)^* = (\varphi_s^{-1}(y), s^*),  \,\, (y,s)^+ = (y,s^+). 
		\end{equation}
		
		Note first that $Y\rtimes^q_{\varphi} S\neq \varnothing$. Indeed, let $s\in S$. By (A4) there is $y\in \operatorname{\mathrm{dom}}\varphi_s \cap q^{-1}(s^*)$. Lemma \ref{lem:q}(1) now implies that $\varphi_s(y)\in \operatorname{\mathrm{ran}}\varphi_s \cap q^{-1}(s^+)$ so that $(\varphi_s(y),s)\in Y\rtimes^q_{\varphi} S$.
		
		\begin{lemma} The operations on $Y\rtimes^q_{\varphi} S$ are well defined.
		\end{lemma}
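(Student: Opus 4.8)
The plan is to check, for each of the three operations, that the pair it outputs genuinely lies in $Y\rtimes^q_{\varphi} S$: if an operation produces a pair $(z,u)\in Y\times S$, we must verify $z\in\operatorname{\mathrm{ran}}\varphi_u$ and $q(z)=u^+$. I would dispose of $^+$ and $^*$ first. For $(y,s)^+=(y,s^+)$ the condition on $q$ is immediate, since $q(y)=s^+=(s^+)^+$, and $y\in\operatorname{\mathrm{ran}}\varphi_s\subseteq\operatorname{\mathrm{ran}}\varphi_{s^+}$ by (A5). For $(y,s)^*=(\varphi_s^{-1}(y),s^*)$ the only nontrivial point is the $q$-value, and I would extract it from Lemma~\ref{lem:q}(2) applied to the element $\varphi_s^{-1}(y)\in\operatorname{\mathrm{dom}}\varphi_s$ (which is defined because $y\in\operatorname{\mathrm{ran}}\varphi_s$): this gives $q(\varphi_s^{-1}(y))=(q(\varphi_s(\varphi_s^{-1}(y)))\,s)^*=(q(y)s)^*=(s^+s)^*=s^*=(s^*)^+$. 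Then $\varphi_s^{-1}(y)\in q^{-1}(s^*)\subseteq\operatorname{\mathrm{dom}}\varphi_{s^*}=\operatorname{\mathrm{ran}}\varphi_{s^*}$, where the inclusion is the noted consequence of (A3) and the last equality holds because $s^*\in P(S)$, so $\varphi_{s^*}$ is a partial identity by Lemma~\ref{lem:proj_identity}.

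For the multiplication $(y,s)(x,t)=(\varphi_s(\varphi_s^{-1}(y)\wedge x),st)$, I would write $z=\varphi_s^{-1}(y)\wedge x$ and $w=\varphi_s(z)$. First, $w$ makes sense: $\varphi_s^{-1}(y)$ is defined since $y\in\operatorname{\mathrm{ran}}\varphi_s$, and $z\leq\varphi_s^{-1}(y)\in\operatorname{\mathrm{dom}}\varphi_s$, which is an order ideal by (A1), so $z\in\operatorname{\mathrm{dom}}\varphi_s$. Since $q$ is a semilattice morphism, $q(z)=q(\varphi_s^{-1}(y))\wedge q(x)=s^*\wedge t^+=s^*t^+$, using the $q$-value computed above and $q(x)=t^+$. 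Now Lemma~\ref{lem:q}(1) yields $q(w)=(sq(z))^+=(ss^*t^+)^+=(st^+)^+=(st)^+$, the last step by \eqref{eq:consequences}. It remains to see $w\in\operatorname{\mathrm{ran}}\varphi_{st}$, and here I would route $z$ back through $\varphi_t$: since $z\leq x\in\operatorname{\mathrm{ran}}\varphi_t$ and $\operatorname{\mathrm{ran}}\varphi_t$ is an order ideal by (A1), we have $z\in\operatorname{\mathrm{ran}}\varphi_t$, so $\varphi_t^{-1}(z)$ is defined; as $z\in\operatorname{\mathrm{dom}}\varphi_s$ as well, $\varphi_t^{-1}(z)\in\operatorname{\mathrm{dom}}(\varphi_s\varphi_t)$, and since $\varphi_s\varphi_t\leq\varphi_{st}$ by (PM1) we conclude $w=\varphi_s(z)=(\varphi_s\varphi_t)(\varphi_t^{-1}(z))=\varphi_{st}(\varphi_t^{-1}(z))\in\operatorname{\mathrm{ran}}\varphi_{st}$.

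The only steps that go beyond bookkeeping are the two $q$-value computations, which is precisely where Lemma~\ref{lem:q}---and through it conditions (A1)--(A4) together with the premorphism axioms---does the work, and the membership $w\in\operatorname{\mathrm{ran}}\varphi_{st}$. For the latter one must resist applying (PM1) to the composite $\varphi_s\varphi_t\leq\varphi_{st}$ at the point $z$ directly (where $\varphi_s\varphi_t$ need not be defined) and instead first pull $z$ back by $\varphi_t^{-1}$ so that $\varphi_s\varphi_t$ is actually defined at the resulting point; the order-ideal hypotheses in (A1) are invoked repeatedly to keep meets and restrictions inside the relevant domains and ranges. I expect this membership claim to be the main (mild) obstacle; everything else is routine.
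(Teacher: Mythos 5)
Your proposal is correct and follows essentially the same route as the paper: order‑ideal arguments from (A1) to keep the meet in the right domains/ranges, Lemma~\ref{lem:q} for the two $q$-value computations, and $\operatorname{\mathrm{ran}}\varphi_s\varphi_t\subseteq\operatorname{\mathrm{ran}}\varphi_{st}$ via (PM1) for the membership $w\in\operatorname{\mathrm{ran}}\varphi_{st}$. The only (harmless) deviations are cosmetic: you compute $q(\varphi_s^{-1}(y))=s^*$ once and reuse it in the product computation where the paper re-expands via \eqref{eq:mov_proj} and \eqref{eq:rule1}, and you justify $\varphi_s^{-1}(y)\in\operatorname{\mathrm{dom}}\varphi_{s^*}$ through (A3) where the paper uses (A5).
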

		\begin{proof} We show that the multiplication is well defined. Let $(y,s),(x,t)\in Y\rtimes^q_{\varphi} S$. Note that $\varphi_s^{-1}(y)\wedge x \leq x\in \operatorname{\mathrm{ran}}\varphi_t$ by (A1). Since $\varphi_s^{-1}(y) \in \operatorname{\mathrm{dom}}\varphi_s$, we have $\varphi_s^{-1}(y)\wedge x \in \operatorname{\mathrm{dom}}\varphi_s$ by (A1). It follows that  $$
			\varphi_s(\varphi_s^{-1}(y)\wedge x) \in \varphi_s(\operatorname{\mathrm{ran}}\varphi_t \cap \operatorname{\mathrm{dom}}\varphi_s) = \operatorname{\mathrm{ran}}\varphi_s\varphi_t \subseteq \operatorname{\mathrm{ran}}\varphi_{st}.
			$$
			We are left to show that $q(\varphi_s(\varphi_s^{-1}(y)\wedge x))=(st)^+$. We calculate:
			\begin{align*}
			q(\varphi_s(\varphi_s^{-1}(y)&\wedge x)) = (sq(\varphi_s^{-1}(y)\wedge x))^+ & (\text{by  Lemma }\ref{lem:q}(1))\\
			& = (sq(\varphi_s^{-1}(y))q(x))^+ & (\text{since } q \text{ is a morphism})\\
			& = (s(q(y)s)^*q(x))^+ & (\text{by  Lemma }\ref{lem:q}(2))\\
			& = (q(y)sq(x))^+ = q(y)(sq(x))^+ & (\text{by } \eqref{eq:mov_proj} \text{ and } \eqref{eq:rule1})\\
			& = s^+(st^+)^+ & (\text{since } q(y)=s^+ \text{ and } q(x)=t^+)\\
			& = (st)^+, & (\text{since } (st^+)^+\leq s^+ \text{  and by } \eqref{eq:consequences} )
			\end{align*}
			as desired.
			
			We now show that $(y,s)^*$ is well defined. From Lemma \ref{lem:q}(2) we have $q(\varphi_s^{-1}(y))=(q(y)s)^*$. In view of $q(y)=s^+$, this yields that $q(\varphi_s^{-1}(y))=s^*= (s^*)^+$. Since $y \in \operatorname{\mathrm{ran}}\varphi_s$, it follows from (A5) that $\varphi_s^{-1}(y) \in \operatorname{\mathrm{dom}}\varphi_{s^*}=\operatorname{\mathrm{ran}}\varphi_{s^*}$. Hence $(y,s)^*\in Y\rtimes^q_{\varphi} S$. That $(y,s)^+\in Y\rtimes^q_{\varphi} S$ is even easier to prove.
		\end{proof}
		
		\begin{proposition} $(Y\rtimes^q_{\varphi} S; \cdot\, , ^*, +)$ is a restriction semigroup.
		\end{proposition}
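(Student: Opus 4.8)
The plan is to verify directly that $(Y\rtimes^q_{\varphi} S;\cdot\,,{}^*,{}^+)$ is a semigroup and that the unary operations given by \eqref{eq:def_star_plus} satisfy the identities \eqref{eq:axioms:plus}, \eqref{eq:axioms:star} and \eqref{eq:axioms:common}. Throughout I would use the following standing facts, all consequences of the hypotheses on $(\varphi,q,Y)$: by (A1) each of $\operatorname{\mathrm{dom}}\varphi_s$ and $\operatorname{\mathrm{ran}}\varphi_s$ is an order ideal of $Y$, hence a subsemilattice, so by (A2) the partial bijection $\varphi_s$ is a semilattice isomorphism between them, and therefore $\varphi_s$ and $\varphi_s^{-1}$ are order-preserving and preserve binary meets; and by Lemma~\ref{lem:proj_identity}, for $e\in P(S)$ the map $\varphi_e$ is the identity map on $\operatorname{\mathrm{dom}}\varphi_e=\operatorname{\mathrm{ran}}\varphi_e$. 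Combined with (A5), the latter gives, for any $(y,s)\in Y\rtimes^q_{\varphi} S$, that $y\in\operatorname{\mathrm{dom}}\varphi_{s^+}$ with $\varphi_{s^+}$ fixing $y$ and, by (A1), every element of $Y$ below $y$; dually $\varphi_s^{-1}(y)\in\operatorname{\mathrm{dom}}\varphi_{s^*}$ with $\varphi_{s^*}$ fixing it and everything below it. I note that conditions (A3) and (A4) are not needed for this proposition; they serve only for nonemptiness and for the later properness statement.

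Associativity is the heart of the matter. Given $(y,s),(x,t),(z,u)\in Y\rtimes^q_{\varphi} S$, put $v=\varphi_t(\varphi_t^{-1}(x)\wedge z)$, which is defined since $\varphi_t^{-1}(x)\wedge z\le\varphi_t^{-1}(x)\in\operatorname{\mathrm{dom}}\varphi_t$ and $\operatorname{\mathrm{dom}}\varphi_t$ is an order ideal. The goal is to show that both $\bigl((y,s)(x,t)\bigr)(z,u)$ and $(y,s)\bigl((x,t)(z,u)\bigr)$ equal $\bigl(\varphi_s(\varphi_s^{-1}(y)\wedge v),\,stu\bigr)$, the equality of second components being just associativity in $S$. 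For the second product this is immediate from \eqref{product-in-Y-rt-S}. For the first, writing $w=\varphi_s(\varphi_s^{-1}(y)\wedge x)$, one observes that $\varphi_s^{-1}(y)\wedge x\in\operatorname{\mathrm{dom}}\varphi_s\cap\operatorname{\mathrm{ran}}\varphi_t$ by (A1), so $w\in\operatorname{\mathrm{ran}}(\varphi_s\varphi_t)$; since $\varphi_s\varphi_t\le\varphi_{st}$ forces these maps to coincide on $\operatorname{\mathrm{dom}}(\varphi_s\varphi_t)$, it follows that $\varphi_{st}^{-1}(w)=\varphi_t^{-1}(\varphi_s^{-1}(y)\wedge x)$, and a further use of (A1) places $\varphi_{st}^{-1}(w)\wedge z$ into $\operatorname{\mathrm{dom}}(\varphi_s\varphi_t)$, where $\varphi_{st}$ equals $\varphi_s\circ\varphi_t$. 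The computation is then closed by two elementary observations: (i) $v\le\varphi_t(\varphi_t^{-1}(x))=x$; and (ii) for any $w'\le x$ in $Y$ one has $\varphi_t\bigl(\varphi_t^{-1}(w')\wedge z\bigr)=w'\wedge v$, proved by showing each side lies below the other after applying $\varphi_t^{-1}$ and using that $\varphi_t,\varphi_t^{-1}$ preserve meets on the order ideals they relate. I expect this associativity argument — and in particular the bookkeeping of which elements lie in which $\operatorname{\mathrm{dom}}\varphi_r$ — to be the main obstacle; the rest is routine.

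The unary axioms then reduce to short, uniform verifications. In every case the second component is governed by the corresponding identity already holding in $S$, while the first component, once the standing facts are applied (so that the relevant partial identities $\varphi_{s^+}$, $\varphi_{s^*}$ drop out), is computed by intersecting order ideals of $Y$ and using that $P(S)$ is a semilattice. Specifically: the connecting identities \eqref{eq:axioms:common} follow from $(s^+)^*=s^+$ and $(s^*)^+=s^*$ in $S$; the identities $x^+x=x$ and $xx^*=x$ follow since $\varphi_{s^+}$ fixes $y$ and $\varphi_s\varphi_s^{-1}$ fixes $y$; the commuting and absorption identities $x^+y^+=y^+x^+$, $(x^+y)^+=x^+y^+$ of \eqref{eq:axioms:plus} and their duals in \eqref{eq:axioms:star} follow by a meet computation in $Y$ together with \eqref{eq:rule1} (to rewrite $(s^+t)^+$ as $s^+t^+$, $(st^*)^*$ as $s^*t^*$, and so on); and the ``ample'' identities $(xy)^+x=xy^+$ and $y(xy)^*=x^*y$ follow, on the first component, from the fact that the first component of $(y,s)(x,t)$ lies below $y$ (so the relevant $(st)^+$ acts trivially), respectively from a meet computation of the type in (ii) above, and on the second component from the identities $(st)^+s=st^+$ and $u(su)^*=s^*u$, which are instances of \eqref{eq:axioms:plus} and \eqref{eq:axioms:star} in $S$. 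Assembling these shows that $(Y\rtimes^q_{\varphi} S;\cdot\,,{}^*,{}^+)$ is a restriction semigroup.
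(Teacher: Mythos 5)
Your proposal is correct and follows essentially the same route as the paper: a direct verification of associativity using that $\varphi_s\varphi_t\leq\varphi_{st}$, that domains and ranges are order ideals, and that the $\varphi_s$ preserve meets (your auxiliary identity (ii) is exactly the paper's computation of $\varphi_t(w\wedge y'\wedge z)$), followed by routine checks of the unary axioms in which $\varphi_{s^+}$ and $\varphi_{s^*}$ act as partial identities. The only caveat is your side remark that (A3) is not needed: it is used (via Lemma~\ref{lem:q}) to establish that the operations are well defined, i.e.\ land back in $Y\rtimes^q_{\varphi}S$, which your verification implicitly presupposes.
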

		
		\begin{proof} 
			
			The proof amounts to a routine verification of the axioms of a restriction semigroup. We first prove that the multiplication is associative. 
			Let $(x,s),(y,t),(z,u)\in Y\rtimes^q_{\varphi} S$. We need to show that
			\begin{equation}\label{eq:assoc0}
			\bigl((x,s)(y,t)\bigr)(z,u) = (x,s) \bigl((y,t)(z,u)\bigr).
			\end{equation}
			We calculate each of these two products and then show that the results coincide.
			We put $x'=\varphi_s^{-1}(x)$ and $y'=\varphi_t^{-1}(y)$. The left-hand side of \eqref{eq:assoc0} is equal to
			\begin{equation}\label{eq:assoc1}
			(\varphi_s(x'\wedge y),st)(z,u) = 
			(\varphi_{st}\bigl(\varphi_{st}^{-1}(\varphi_s(x'\wedge y))\wedge z\bigr), stu). 
			\end{equation}
			Because $x'\wedge y \leq y \in \operatorname{\mathrm{ran}}\varphi_t$ and $\operatorname{\mathrm{ran}}\varphi_t$ is an order ideal,  $x'\wedge y \in \operatorname{\mathrm{ran}}\varphi_t$.
			Let 
			$$w=\varphi_t^{-1}(x'\wedge y).
			$$
			Then $\varphi_s(x'\wedge y) = \varphi_s\varphi_t(w)$.
			Since $\varphi_s\varphi_t\leq \varphi_{st}$ it follows that $w\in \operatorname{\mathrm{dom}}\varphi_{st}$ and also $\varphi_{st}(w)=\varphi_s\varphi_t(w)$. Hence $\varphi_s(x'\wedge y) = \varphi_{st}(w)$. Further, $\varphi_{st}^{-1}\varphi_{st}(w) = w$, thus \eqref{eq:assoc1} is equivalent to
			\begin{equation*}\label{eq:assoc2}
			\left((x,s)(y,t)\right)(z,u) = (\varphi_{st}(w\wedge z), stu).
			\end{equation*}
			
			The right-hand side of \eqref{eq:assoc0} is equal to 
			\begin{equation*}\label{eq:assoc3}
			(x,s) (\varphi_t(y'\wedge z),tu) = 
			(\varphi_s\bigl(x'\wedge \varphi_t(y'\wedge z)\bigr), stu).
			\end{equation*}
			We thus need to show that
			\begin{equation}\label{eq:assoc4}
			\varphi_{st}(w\wedge z)=\varphi_s\bigl(x'\wedge \varphi_t(y'\wedge z)\bigr).
			\end{equation}
			Since $w\in \operatorname{\mathrm{dom}} \varphi_s\varphi_t$, we have  $w \wedge z\in \operatorname{\mathrm{dom}} \varphi_s\varphi_t$ and
			\mbox{$\varphi_{st}(w\wedge z)=\varphi_s\varphi_t(w\wedge z)$}. Applying $\varphi_s^{-1}$ to both sides of \eqref{eq:assoc4}, we obtain the equivalent equality
			\begin{equation}\label{eq:assoc5}
			\varphi_{t}\bigl(w\wedge z\bigr)=x'\wedge \varphi_t(y'\wedge z).
			\end{equation}
			
			Since $\varphi_t(w)= x'\wedge y\leq y$, it follows that $w\leq y'$. Hence the left-hand side of \eqref{eq:assoc5} is equal to  $\varphi_{t}\bigl(w\wedge y'\wedge z\bigr)$.
			Because $\varphi_t$ is an order automorphism, it respects the operation $\wedge$, i.e., $\varphi_t(a\wedge b)=\varphi_t(a)\wedge \varphi_t(b)$ whenever $a,b\in \operatorname{\mathrm{dom}}\varphi_t$. It follows that the left-hand side of \eqref{eq:assoc5} is equal to
			\begin{equation*}
			\varphi_{t}\bigl(w\wedge y'\wedge z\bigr) = \varphi_t(w) \wedge \varphi_t(y'\wedge z) = x'\wedge y \wedge \varphi_t(y'\wedge z).\\
			\end{equation*}
			Since $y'\wedge z\leq y'$, the latest expression is equal to $x' \wedge  \varphi_t(y'\wedge z)$, which is precisely the right-hand side of \eqref{eq:assoc5}. The associativity of the multiplication in $Y\rtimes^q_{\varphi} S$ is established.
			
			We now verify axioms \eqref{eq:axioms:plus}. Let $(y,s),(z,t)\in Y\rtimes^q_{\varphi} S$. We have:
			\begin{align*}
			(y,s)^+(y,s)&=(\varphi_{s^+}(y),s^+)(y,s)   & (\text{by (A5) and \eqref{eq:def_star_plus}})\\
			&=(\varphi_{s^+}(y),s^+s)              & (\text{by \eqref{product-in-Y-rt-S}})\\
			&=(y,s),                               & (\text{by (A5) and since } s^+s=s)
			\end{align*}
			so that the first axiom in \eqref{eq:axioms:plus} is verified.
			For the second axiom we calculate:
			\begin{align*}
			(y,s)^+(z,t)^+&=(\varphi_{s^+}(y),s^+)(z,t^+)   & (\text{by (A5) and \eqref{eq:def_star_plus}})\\
			&=(\varphi_{s^+}(y\wedge z),s^+t^+)                & \text{(by \eqref{product-in-Y-rt-S})}\\
			&=(y\wedge z,s^+t^+).                         & \text{(by (A5))}
			\end{align*}
			By symmetry, we also have the equality $(z,t)^+(y,s)^+=(z\wedge y,t^+s^+)$. It follows that
			$(y,s)^+(z,t)^+=(z,t)^+(y,s)^+$, as needed.
			For the third axiom in \eqref{eq:axioms:plus}, we calculate:
			\begin{align*}
			((y,s)^+(z,t))^+&=((y,s^+)(z,t))^+    & \text{(by \eqref{eq:def_star_plus})}\\
			&=(\varphi_{s^+}(y\wedge z),s^+t)^+           & \text{(by \eqref{product-in-Y-rt-S})}\\
			&=(y\wedge z,(s^+t)^+) = (y\wedge z,s^+t^+).                    & \text{(by \eqref{eq:def_star_plus} and \eqref{eq:axioms:plus})}
			\end{align*}
			But we have verified above that  $(y,s)^+(z,t)^+=(y\wedge z,s^+t^+)$. We obtain the equality
			$((y,s)^+(z,t))^+=(y,s)^+(z,t)^+$.
			We finally verify the fourth axiom in \eqref{eq:axioms:plus}. We put $y'=\varphi_s^{-1}(y)$. Then
			\begin{align*}
			((y,s)(z,t))^+(y,s)&=(\varphi_s(y'\wedge z),(st)^+)(y,s)  & \text{(by  \eqref{product-in-Y-rt-S} and \eqref{eq:def_star_plus})}\\
			&=(\varphi_{(st)^+}(\varphi_s(y'\wedge z)\wedge y),(st)^+s)     & \text{(by \eqref{product-in-Y-rt-S})}\\
			&=(\varphi_s(y'\wedge z)\wedge y,(st)^+s)               & \text{(by Lemma \ref{lem:proj_identity})}\\
			&=(\varphi_s(y'\wedge z),st^+)                          & \text{(applying $\varphi_s(y'\wedge z)\le y$  and  \eqref{eq:axioms:plus})}\\
			&=(y,s)(z,t^+)=   (y,s)(z,t)^+.                                & \text{(by \eqref{product-in-Y-rt-S} and \eqref{eq:def_star_plus})}
			\end{align*}
			We now turn to checking axioms \eqref{eq:axioms:star}. 
			We put $y'=\varphi_s^{-1}(y)$ and $z'=\varphi_t^{-1}(z)$. For the first axiom in  \eqref{eq:axioms:star}, we calculate:
			\begin{align*}
			(y,s)(y,s)^*&=(y,s)(y', s^*)   & \text{(by \eqref{eq:def_star_plus})}\\
			&=(\varphi_s(\varphi_s^{-1}(y)\wedge y'), ss^*)     & \text{(by \eqref{product-in-Y-rt-S})}\\
			&=(y,s).                       & 
			\end{align*}
			For the second axiom in  \eqref{eq:axioms:star}, we calculate:
			\begin{align*}
			(y,s)^*(z,t)^*&=(y', s^*)(z', t^*)   & \text{(by  \eqref{eq:def_star_plus})}\\
			&=(y'\wedge z', s^*t^*).                & \text{(by \eqref{eq:def_star_plus} and (A5))}
			\end{align*}  
			By symmetry,  $(z,t)^*(y,s)^*=(z'\wedge y', t^*s^*)$.
			Hence $(y,s)^*(z,t)^*=(z,t)^*(y,s)^*$.
			The third axiom in \eqref{eq:axioms:star} holds because
			\begin{align*}
			((y,s)(z,t)^*)^*&=((y,s)(z', t^*))^*        & \text{(by \eqref{eq:def_star_plus})}\\
			&=(\varphi_s(y'\wedge z'), st^*)^*                & \text{(by \eqref{product-in-Y-rt-S})}\\
			&=(y'\wedge z',s^*t^*), & \text{(by \eqref{eq:def_star_plus} and \eqref{eq:axioms:star})}
			\end{align*}
			which, as shown above, is equal to $(y,s)^*(z,t)^*$.
			For the fourth axiom in \eqref{eq:axioms:star}, we first observe that
			\begin{equation}\label{eq:axiom4}
			(y,s)((z,t)(y,s))^* =(y,s)(\varphi_t(z'\wedge y),ts)^* = (y,s)(\varphi_{ts}^{-1}\varphi_t(z'\wedge y),(ts)^*).
			\end{equation}
			Since $z'\wedge y\leq y$, we have that $z'\wedge y\in \operatorname{\mathrm{ran}}\varphi_s$ whence 
			$z'\wedge y= \varphi_s\varphi_s^{-1}(z'\wedge y)$. Because, in addition, $\varphi_t\varphi_s\leq \varphi_{ts}$,
			the right-hand side of \eqref{eq:axiom4} can be rewritten as 
			$$(y,s)(\varphi_{ts}^{-1}\varphi_{ts}\varphi_s^{-1}(z'\wedge y),(ts)^*) = (y,s)(\varphi_s^{-1}(z'\wedge y),(ts)^*),
			$$
			which, by the definition of the multiplication \eqref{product-in-Y-rt-S} and \eqref{eq:axioms:star}, is equal to 
			$$
			(\varphi_s(\varphi_s^{-1}(y)\wedge \varphi_s^{-1}(z'\wedge y)), s(ts)^*) = (\varphi_s\varphi_s^{-1}(z'\wedge y), t^*s)= (z'\wedge y, t^*s).
			$$
			It remains to observe that
			$$
			(z,t)^*(y,s) = (z',t^*)(y,s) = (\varphi_{t^*}(z'\wedge y),t^*s) = (z'\wedge y, t^*s).
			$$
			
			Axioms \eqref{eq:axioms:common} that relate $^+$ with $^*$ are easy to check: for $(y,s)\in Y\rtimes^q_{\varphi} S$ we have
			$$
			((y,s)^+)^* =(y,s^+)^*  = (y,s^+) = (y,s)^+ 
			$$
			and, similarly, $((y,s)^*)^+ = (y,s)^*$. This completes the proof.
		\end{proof}

		In the following lemma we collect some properties of $Y\rtimes^q_{\varphi} S$.
		
		\begin{lemma}\label{lem:properties_extension}\mbox{}
			\begin{enumerate}[(1)]
				\item The semilattice $P(Y\rtimes^q_{\varphi} S)$ is isomorphic to the semilattice $Y$ via the map $(y,q(y))\mapsto y$.
				\item Let $(y,s),(z,t)\in Y\rtimes^q_{\varphi} S$. Then
				$(y,s)\leq (z,t)$ if and only if $y\leq z$ and $s\leq t$.
				\item  Let $(y,s),(z,t)\in Y\rtimes^q_{\varphi} S$. Then
				$(y,s)\sim(z,t)$ if and only if $s\sim t$.
			\end{enumerate}
		\end{lemma}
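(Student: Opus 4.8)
The plan is to treat the three parts in order, deriving parts (2) and (3) from a single elementary computation of products with a projection factor.

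\textbf{Part (1).} First I would identify the projections of $Y\rtimes^q_{\varphi} S$. Since $(y,s)^+=(y,s^+)$ by \eqref{eq:def_star_plus}, the equality $(y,s)=(y,s)^+$ holds exactly when $s=s^+\in P(S)$, and then the membership condition $q(y)=s^+$ forces $s=q(y)$. Conversely, for every $y\in Y$ the pair $(y,q(y))$ lies in $Y\rtimes^q_{\varphi} S$: as $q(y)\in P(S)$, condition (A3) gives $y\in q^{-1}(q(y))\subseteq\operatorname{\mathrm{dom}}\varphi_{q(y)}$, and by Lemma~\ref{lem:proj_identity} the map $\varphi_{q(y)}$ is a partial identity, so $y\in\operatorname{\mathrm{ran}}\varphi_{q(y)}$ while the constraint $q(y)=q(y)^+$ is trivial. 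Hence $P(Y\rtimes^q_{\varphi} S)=\{(y,q(y))\colon y\in Y\}$ and $\theta\colon(y,q(y))\mapsto y$ is a bijection. To see $\theta$ is a semilattice morphism I would apply \eqref{product-in-Y-rt-S}: since $\varphi_{q(y)}$ is a partial identity, $\varphi_{q(y)}^{-1}(y)=y$; since $y\wedge y'\le y\in\operatorname{\mathrm{dom}}\varphi_{q(y)}$, condition (A1) gives $y\wedge y'\in\operatorname{\mathrm{dom}}\varphi_{q(y)}$ with $\varphi_{q(y)}(y\wedge y')=y\wedge y'$; and $q(y)q(y')=q(y\wedge y')$ as $q$ is a semilattice morphism. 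So $(y,q(y))(y',q(y'))=(y\wedge y',q(y\wedge y'))$, which $\theta$ sends to $y\wedge y'$.

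\textbf{Parts (2) and (3) — the common computation.} The key step is the identity $(y,s)^+(z,t)=(y\wedge z,\,s^+t)$, valid for all $(y,s),(z,t)\in Y\rtimes^q_{\varphi} S$. Indeed $(y,s)^+(z,t)=(y,s^+)(z,t)$, and since $\varphi_{s^+}$ is a partial identity (Lemma~\ref{lem:proj_identity}) with $y\in\operatorname{\mathrm{ran}}\varphi_s\subseteq\operatorname{\mathrm{ran}}\varphi_{s^+}=\operatorname{\mathrm{dom}}\varphi_{s^+}$ by (A5), one has $\varphi_{s^+}^{-1}(y)=y$, while $y\wedge z\in\operatorname{\mathrm{dom}}\varphi_{s^+}$ by (A1); now \eqref{product-in-Y-rt-S} gives the claim. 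For part~(2), Lemma~\ref{lem:lem1}\eqref{i:b2} says $(y,s)\le(z,t)$ iff $(y,s)=(y,s)^+(z,t)=(y\wedge z,s^+t)$, i.e.\ iff $y\wedge z=y$ and $s^+t=s$, that is, iff $y\le z$ and $s\le t$ (the latter again by Lemma~\ref{lem:lem1}\eqref{i:b2}).

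\textbf{Part (3).} For the forward implication I would use that the second-coordinate map $\pi\colon Y\rtimes^q_{\varphi} S\to S$, $(y,s)\mapsto s$, is a morphism of restriction semigroups (immediate from \eqref{product-in-Y-rt-S} and \eqref{eq:def_star_plus}), so it preserves $\sim$ by Lemma~\ref{lem:prem_compatibility}; hence $(y,s)\sim(z,t)$ forces $s\sim t$. For the converse, assume $s\sim t$. By the common computation $(y,s)^+(z,t)=(y\wedge z,s^+t)$ and $(z,t)^+(y,s)=(z\wedge y,t^+s)$, and these coincide since $y\wedge z=z\wedge y$ and $s^+t=t^+s$. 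For the other half of compatibility, \eqref{product-in-Y-rt-S} and \eqref{eq:def_star_plus} give $(y,s)(z,t)^*=(\varphi_s(u),\,st^*)$ and $(z,t)(y,s)^*=(\varphi_t(u),\,ts^*)$, where $u=\varphi_s^{-1}(y)\wedge\varphi_t^{-1}(z)$; since $st^*=ts^*$, it remains to prove $\varphi_s(u)=\varphi_t(u)$. Here $u\le\varphi_s^{-1}(y)\in\operatorname{\mathrm{dom}}\varphi_s$ and $u\le\varphi_t^{-1}(z)\in\operatorname{\mathrm{dom}}\varphi_t$, so $u\in\operatorname{\mathrm{dom}}\varphi_s\cap\operatorname{\mathrm{dom}}\varphi_t$ by (A1); by (A5) and Lemma~\ref{lem:proj_identity}, $u\in\operatorname{\mathrm{dom}}\varphi_{t^*}$ with $\varphi_{t^*}(u)=u$, hence $u\in\operatorname{\mathrm{dom}}(\varphi_s\varphi_{t^*})$ and $\varphi_s\varphi_{t^*}(u)=\varphi_s(u)$; since $\varphi_s\varphi_{t^*}\le\varphi_{st^*}$ by (PM1), we get $\varphi_{st^*}(u)=\varphi_s(u)$, and symmetrically $\varphi_{ts^*}(u)=\varphi_t(u)$. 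As $st^*=ts^*$, the two sides agree. I expect this last equality $\varphi_s(u)=\varphi_t(u)$ to be the only genuinely non-bookkeeping step: it is where compatibility in $S$ and the premorphism axioms are combined to transfer a statement about partial bijections of $Y$, whereas parts~(1), (2) and the $^+$-half of part~(3) are direct consequences of the defining conditions (A1), (A5) and \eqref{product-in-Y-rt-S}, \eqref{eq:def_star_plus}.
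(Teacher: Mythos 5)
Your proof is correct and follows essentially the same route as the paper's: the same computation $(y,s)^+(z,t)=(y\wedge z,s^+t)$ drives parts (2) and (3), and the $^*$-half of (3) is handled by the identical reduction $\varphi_s(y'\wedge z')=\varphi_{st^*}(y'\wedge z')$ via (A1), (A5) and $\varphi_s\varphi_{t^*}\leq\varphi_{st^*}$. The only cosmetic difference is that you obtain the forward implication in (3) from the second-coordinate projection being a morphism, whereas the paper reads it off the same two displayed equivalences.
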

		
		\begin{proof}
			(1) Let $(y,s) \in P(Y\rtimes^q_{\varphi} S)$. Then $(y,s) = (y,s)^+ = (y, s^+) = (y, q(y))$. On the other hand, (A3) implies that $(y, q(y)) \in Y\rtimes^q_{\varphi} S$ whence $(y, q(y)) \in P(Y\rtimes^q_{\varphi} S)$ for any $y\in Y$. The claim now easily follows.
			
			(2) $(y,s)\leq (z,t)$ holds if and only if $(y,s) = (y,s)^+(z,t) = (y,s^+)(z,t) = (y\wedge z, s^+t)$. Thus $(y,s)\leq (z,t)$ is equivalent to $y\leq z$ and $s\leq t$.
			
			(3) Observe that 
			$$(y,s)^+(z,t)=(y,s^+)(z,t) =(\varphi_{s^+}(y\wedge z),s^+t) = (y\wedge z,s^+t),$$
			thus also $(z,t)^+(y,s)=(z\wedge y,t^+s)$.  
			It follows that $(y,s)^+(z,t)=(z,t)^+(y,s)$ holds if and only if $s^+t=t^+s$.
			
			Let $y'=\varphi_s^{-1}(y)$ and $z'=\varphi_t^{-1}(z)$. We have
			$$(y,s)(z,t)^*=(y,s)(z',t^*)=(\varphi_s(y'\wedge z'),st^*).$$
			Since $z'\in \operatorname{\mathrm{dom}}\varphi_t$ it follows from (A5) that $z'\in \operatorname{\mathrm{dom}}\varphi_{t^*}$. Hence
			$y'\wedge z' \in \operatorname{\mathrm{dom}}(\varphi_{t^*})$ by (A2). 
			It follows that $\varphi_s(y'\wedge z')=\varphi_s\varphi_{t^*}(y'\wedge z')$. Because $\varphi_s\varphi_{t^*}\leq \varphi_{st^*}$, we have that
			$y'\wedge z'\in \mathrm{dom}\varphi_{st^*}$ and $\varphi_s\varphi_{t^*}(y'\wedge z')=\varphi_{st^*}(y'\wedge z')$.
			Hence $(y,s)(z,t)^* = (\varphi_{st^*}(y'\wedge z'),st^*)$. Then also $(z,t)(y,z)^* = (\varphi_{ts^*}(y'\wedge z'),ts^*)$.
			It follows that $(y,s)(z,t)^*=(z,t)(y,z)^*$ holds if and only if $st^*=ts^*$. This finishes the proof.
		\end{proof}
		
		We define the map  $\Psi\colon Y\rtimes^q_{\varphi} S \to S$ by
		\begin{equation}\label{eq:def_Psi}
		\Psi(y,s) = s, \,\, (y,s)\in Y\rtimes^q_{\varphi} S.
		\end{equation}
		
		It is immediate that $\Psi$ preserves the multiplication and the unary operations $^*$ and $^+$. Thus it is a morphism of restriction semigroups.
		Lemma \ref{lem:properties_extension}(3) implies that $\Psi$ is proper.
		
		\subsection{The two underlying premorphisms of a proper extension}\label{subs:underlying}
		Let $\psi\colon T\to S$ be a proper morphism between restriction semigroups. We put $p=\psi|_{P(T)}$. Then $p\colon P(T) \to P(S)$ is a morphism of semilattices.
		
		Let $s\in S$. We introduce partially defined maps $\widehat{\psi}_s \colon P(T)\to P(T)$ and $\widetilde{\psi}_s \colon P(T)\to P(T)$ by setting
		$$
		\operatorname{\mathrm{dom}}\widehat{\psi}_s = \{e\in P(T)\colon e\leq t^* \text{ for some } t\in T \text{ such that } \psi(t)\leq s\},
		$$
		$$
		\operatorname{\mathrm{dom}}\widetilde{\psi}_s = \{e\in P(T)\colon e\leq t^* \text{ for some } t\in T \text{ such that } \psi(t)= s\}.
		$$
		For $e\in \operatorname{\mathrm{dom}}\widehat{\psi}_s$ we set
		\begin{equation*}\label{eq:def_psi_hat}
		\widehat{\psi}_s(e)=(te)^+ \text{ where } t\in T \text{ is such that } e\leq t^* \text{ and } \psi(t)\leq s.
		\end{equation*}
		Similarly, for $e\in \operatorname{\mathrm{dom}}\widetilde{\psi}_s$ we set  
		\begin{equation*}
		\widetilde{\psi}_s(e)=(te)^+ \text{ where } t\in T \text{ is such that } e\leq t^* \text{ and } \psi(t)= s.
		\end{equation*}
		
		\begin{example} {\em In the case where $\psi\colon S\to S$ is the identity morphism, for each $s\in S$ we have $\widehat{\psi}_s = \widetilde{\psi}_s$ and the map $s\mapsto \widehat{\psi}_s$ coincides with the Munn representation of $S$.}
		\end{example}
		
		\begin{lemma} \label{lem:well_def}\mbox{} 
			\begin{enumerate}[(1)]
				\item For all $s\in S$ and $e\in  \operatorname{\mathrm{dom}}\widehat{\psi}_s$  the value $\widehat{\psi}_s(e)$  is well defined. 
				\item For all $s\in S$ and $e\in  \operatorname{\mathrm{dom}}\widetilde{\psi}_s$  the value $\widetilde{\psi}_s(e)$  is well defined. 
				\item \label{ij2:3} For all $s\in S$ we have $\operatorname{\mathrm{dom}}\widetilde{\psi}_s\subseteq \operatorname{\mathrm{dom}}\widehat{\psi}_s$ and $\widetilde{\psi}_s(e) = \widehat{\psi}_s(e)$ for all $e\in \operatorname{\mathrm{dom}}\widetilde{\psi}_s$. That is, $\widetilde{\psi}_s$ is the restriction of $\widehat{\psi}_s$ to the set $\operatorname{\mathrm{dom}}\widetilde{\psi}_s$. 
				\item For all $s\in S$ the maps $\widehat{\psi}_s$ and $\widetilde{\psi}_s$ are injective. 
				\item For all $e\in P(S)$ the maps $\widehat{\psi}_e$ and $\widetilde{\psi}_e$ are the identity maps on their domains.
			\end{enumerate}
		\end{lemma}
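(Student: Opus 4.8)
The plan is to reduce all five parts to one elementary observation, after which properness of $\psi$ together with parts~\eqref{i:b62} and~\eqref{i:b8} of Lemma~\ref{lem:lem1} do all the work. The observation is: if $t\in T$ and $e\in P(T)$ satisfy $e\le t^*$, then $u:=te$ has $u^*=t^*e=e$ by~\eqref{eq:rule1}, hence $u=tu^*\le t$ by Lemma~\ref{lem:lem1}\eqref{i:b2}, and therefore $\psi(u)\le\psi(t)$ since morphisms of restriction semigroups are order-preserving. Call $t\in T$ a \emph{witness for $(e,s)$} if $e\le t^*$ and $\psi(t)\le s$. By definition, for $e\in\operatorname{\mathrm{dom}}\widehat\psi_s$ there is a witness for $(e,s)$, and $\widehat\psi_s(e)=(te)^+$ for any such witness; for $e\in\operatorname{\mathrm{dom}}\widetilde\psi_s$ there is moreover a witness $t$ with $\psi(t)=s$, and such $t$ is in particular a witness for $(e,s)$. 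This already proves the inclusion $\operatorname{\mathrm{dom}}\widetilde\psi_s\subseteq\operatorname{\mathrm{dom}}\widehat\psi_s$ of part~(3).

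For parts~(1) and~(2), let $t_1,t_2$ be witnesses for $(e,s)$ and set $u_i=t_ie$. Then $u_i^*=e$ and $\psi(u_i)\le s$ by the observation, so $\psi(u_1)\sim\psi(u_2)$ by Lemma~\ref{lem:lem1}\eqref{i:b62}, while $\psi(u_1)^*=\psi(e)=\psi(u_2)^*$; hence $\psi(u_1)=\psi(u_2)$ by Lemma~\ref{lem:lem1}\eqref{i:b8}. Properness of $\psi$ then gives $u_1\sim u_2$, and together with $u_1^*=e=u_2^*$ a second application of Lemma~\ref{lem:lem1}\eqref{i:b8} yields $u_1=u_2$, whence $(t_1e)^+=u_1^+=u_2^+=(t_2e)^+$. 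Thus $\widehat\psi_s(e)$ is well defined; since every witness used to define $\widetilde\psi_s(e)$ is also a witness for $(e,s)$, the value $\widetilde\psi_s(e)$ is well defined too and equals $\widehat\psi_s(e)$, which completes part~(3).

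For the injectivity in part~(4), suppose $\widehat\psi_s(e_1)=\widehat\psi_s(e_2)$ with witnesses $t_1,t_2$ for $(e_1,s)$ and $(e_2,s)$, and put $u_i=t_ie_i$, so $u_i^*=e_i$, $u_i^+=\widehat\psi_s(e_i)$ and $\psi(u_i)\le s$. As above $\psi(u_1)\sim\psi(u_2)$ by Lemma~\ref{lem:lem1}\eqref{i:b62}, and since $\psi(u_1)^+=\psi(u_1^+)=\psi(u_2^+)=\psi(u_2)^+$ we get $\psi(u_1)=\psi(u_2)$ by Lemma~\ref{lem:lem1}\eqref{i:b8}; properness gives $u_1\sim u_2$, and with $u_1^+=u_2^+$ we conclude $u_1=u_2$, so $e_1=u_1^*=u_2^*=e_2$. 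Injectivity of $\widetilde\psi_s$ follows as it is a restriction of $\widehat\psi_s$. For part~(5), let $e\in P(S)$, $f\in\operatorname{\mathrm{dom}}\widehat\psi_e$, and let $t$ be a witness for $(f,e)$, so $f\le t^*$ and $\psi(t)\le e$. Since $\psi(t)\le e\in P(S)$, Lemma~\ref{lem:lem1}\eqref{i:b1} and closure of $P(S)$ under multiplication give $\psi(t)\in P(S)$, hence $\psi(tf)=\psi(t)\psi(f)\in P(S)$; as proper morphisms are projection pure, $tf\in P(T)$, so $(tf)^+=(tf)^*=t^*f=f$ by~\eqref{eq:rule1} and $f\le t^*$. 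Thus $\widehat\psi_e(f)=(tf)^+=f$, and the identical computation gives $\widetilde\psi_e(f)=f$.

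There is no serious obstacle here; the one point that requires care is that $\psi$ need not be injective on $P(T)$, so each step that cancels down to an honest equality must pass through the ``$\sim$ together with equal ${}^*$ (or ${}^+$)'' implication of Lemma~\ref{lem:lem1}\eqref{i:b8} rather than through any naive purity-type statement, and in part~(5) one must genuinely invoke that proper morphisms are projection pure rather than attempt a more direct argument.
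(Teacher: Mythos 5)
Your argument is correct and follows essentially the same route as the paper: properness converts equality (or comparability) of $\psi$-images into compatibility, Lemma~\ref{lem:lem1}\eqref{i:b8} upgrades compatibility plus equal ${}^*$ or ${}^+$ to equality, and part (5) rests on projection purity of proper morphisms. The only cosmetic difference is that you first pass to the products $u_i=t_ie_i$ and invoke the definition of properness directly, whereas the paper transfers compatibility at the level of the witnesses $t_i$ via Lemma~\ref{lem:proper_compatibility} (and, for injectivity, Lemma~\ref{lem:aux14}(2)); both are equally valid.
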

		
		\begin{proof} (1) Let $e\leq t^*, r^*$ where $\psi(t), \psi(r)\leq s$. Then $\psi(t)\sim \psi(r)$. By Lemma \ref{lem:proper_compatibility} we have $t\sim r$.
			It follows that $(te)^+ = (tr^*e)^+ = (rt^*e)^+ = (re)^+$. Hence $\widehat{\psi}_s(e)$ is well defined. 
			
			(2) Let $e\leq t^*, r^*$ where $\psi(t) = \psi(r) = s$. Then $t\sim r$, since $\psi$ is proper. The same calculation as in the proof of part (1) shows that 
			that $(te)^+ = (re)^+$. Hence $\widetilde{\psi}_s(e)$ is well defined. 

			(3) This is immediate by the definitions of $\widehat{\psi}_s$ and  $\widetilde{\psi}_s$.
			
			(4) Let $e,f \in  \operatorname{\mathrm{dom}}\widehat{\psi}_s$, where $e\leq t^*$, $f\leq r^*$ with $t,r \in \psi^{-1}({s^{\downarrow}})$ and  assume that $\widehat{\psi}_s(e) = \widehat{\psi}_s(f)$, that is, $(te)^+ = (rf)^+$. Because $\psi(t) \sim \psi(r)$,  Lemma \ref{lem:proper_compatibility} implies that $t\sim r$. Using Lemma \ref{lem:aux14}(2) we obtain $te \sim rf$, thus $te=rf$, by Lemma \ref{lem:lem1}\eqref{i:b8}.
			Hence $e = (te)^* = (rf)^* =f$, so that $\widehat{\psi}_s$ is injective.
			The injectivity of $\widetilde{\psi}_s$ now follows from~(3).
			
			(5) Let $x\in \operatorname{\mathrm{dom}}\widehat{\psi}_e$. Then $x\leq t^*$, where $\psi(t) \leq e$. Since $e\in P(S)$, we have $\psi(t) = e\psi(t)^* \in P(S)$.  Since $\psi$ is proper, it is projection pure, so that $t\in P(T)$ and thus $t=t^*$. We then have $\widehat{\psi}_e(x) = (tx)^+=(t^*x)^+ = x^+=x$, as needed. The claim about $\widetilde{\psi}_e$ follows from part (3).
		\end{proof}
		
		We have defined two maps $\widehat{\psi} \colon S\to {\mathcal I}(P(T))$, $s\mapsto \widehat{\psi}_s$, and $\widetilde{\psi} \colon S\to {\mathcal I}(P(T))$, $s\mapsto \widetilde{\psi}_s$.
		Observe that $\widetilde{\psi}_s\leq \widehat{\psi}_s$ for all $s\in S$. Furthermore,
		$$
		\operatorname{\mathrm{ran}}\widehat{\psi}_s = \{e\in P(T)\colon e\leq t^+ \text{ for some } t\in T \text{ such that } \psi(t)\leq s\},
		$$
		$$
		\operatorname{\mathrm{ran}}\widetilde{\psi}_s = \{e\in P(T)\colon e\leq  t^+ \text{ for some } t\in T \text{ such that } \psi(t)= s\}.
		$$
		Note that for all $e\in \operatorname{\mathrm{ran}}\widehat{\psi}_s$ we have
		\begin{equation*}\label{eq:def_psi_hat11}
		\widehat{\psi}^{-1}_s(e)=(et)^* \text{ where } t\in T \text{ is such that } e\leq t^+ \text{ and } \psi(t)\leq s.
		\end{equation*}
		Similarly, for $e\in \operatorname{\mathrm{ran}}\widetilde{\psi}_s$ we have 
		\begin{equation*}
		\widetilde{\psi}^{-1}_s(e)= (et)^* \text{ where } t\in T \text{ is such that } e\leq t^+ \text{ and } \psi(t)= s.
		\end{equation*}

		\begin{proposition}\label{prop:psi_hat_tilde} Let $\psi\colon T\to S$ be a proper morphism.
			\begin{enumerate}[(1)]
				\item The map $\widehat{\psi}$ is a premorphism and the triple $(\widehat{\psi}, p, P(T))$ satisfies conditions (A1)--(A4). Moreover, it satisfies condition (A3a). 
				\item The map $\widetilde{\psi}$ is a premorphism and the triple $(\widetilde{\psi}, p, P(T))$ satisfies conditions (A1)--(A4). Moreover, it satisfies condition (A3b).
			\end{enumerate}
		\end{proposition}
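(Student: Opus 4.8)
The plan is to prove part~(1) in full and then obtain part~(2) by the same arguments, flagging the (essentially single) point where they differ. Throughout I would use that a morphism of restriction semigroups is order-preserving, that by Lemma~\ref{lem:well_def} each $\widehat{\psi}_s$ and $\widetilde{\psi}_s$ is a well-defined element of $\mathcal{I}(P(T))$ with $\widetilde{\psi}_s=\widehat{\psi}_s|_{\operatorname{\mathrm{dom}}\widetilde{\psi}_s}$, and the descriptions of $\operatorname{\mathrm{ran}}\widehat{\psi}_s$, $\operatorname{\mathrm{ran}}\widetilde{\psi}_s$ and of the inverse maps recorded just before the statement.

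First the routine conditions for $\widehat{\psi}$. Condition~(A1) is immediate because the defining inequalities $e\le t^*$ and $e\le t^+$ are inherited by smaller elements with the same witness $t$, so $\operatorname{\mathrm{dom}}\widehat{\psi}_s$ and $\operatorname{\mathrm{ran}}\widehat{\psi}_s$ are order ideals. For (PM2): if $e\in\operatorname{\mathrm{dom}}\widehat{\psi}_s$ has a witness $t$, then $\psi(t^*)=\psi(t)^*\le s^*$ and $e\le(t^*)^*$, so $e\in\operatorname{\mathrm{dom}}\widehat{\psi}_{s^*}$; as $s^*\in P(S)$, Lemma~\ref{lem:well_def}(5) says $\widehat{\psi}_{s^*}$ is the identity on its domain, which is exactly $\widehat{\psi}_s^*\le\widehat{\psi}_{s^*}$; (PM3) is dual, via the $\operatorname{\mathrm{ran}}$-description and $t^+$. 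For (A4): surjectivity of $\psi$ gives $t$ with $\psi(t)=s$, and then $t^*\in\operatorname{\mathrm{dom}}\widehat{\psi}_s$ with $p(t^*)=\psi(t)^*=s^*$. For (A3): a projection $g\in P(T)$ with $p(g)=e$ witnesses that every $f\le g$ lies in $\operatorname{\mathrm{dom}}\widehat{\psi}_e$, giving $(p^{-1}(e))^{\downarrow}\subseteq\operatorname{\mathrm{dom}}\widehat{\psi}_e$; conversely $\psi(t)\le e\in P(S)$ forces $\psi(t)\in P(S)$, so any $f\le t^*$ has $p(f)\le\psi(t^*)=\psi(t)\le e$. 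For (A2): $\widehat{\psi}_s$ is a bijection onto $\operatorname{\mathrm{ran}}\widehat{\psi}_s$ by Lemma~\ref{lem:well_def}(4) and the range description, and it and its inverse are order-preserving --- if $f\le e$ in $\operatorname{\mathrm{dom}}\widehat{\psi}_s$, a witness $t$ for $e$ also witnesses $f$ and $tf\le te$ gives $(tf)^+\le(te)^+$ by Lemma~\ref{lem:lem1}\eqref{i:b6}; the same reasoning applied to the formula $\widehat{\psi}_s^{-1}(e)=(et)^*$ handles $\widehat{\psi}_s^{-1}$. Hence $\widehat{\psi}_s\in\Sigma(P(T))$.

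The substantive step is (PM1). Given $e\in\operatorname{\mathrm{dom}}(\widehat{\psi}_s\widehat{\psi}_t)$, I would fix a witness $r$ for $e\in\operatorname{\mathrm{dom}}\widehat{\psi}_t$ (so $e\le r^*$, $\psi(r)\le t$, $\widehat{\psi}_t(e)=(re)^+$) and a witness $u$ for $(re)^+\in\operatorname{\mathrm{dom}}\widehat{\psi}_s$ (so $(re)^+\le u^*$, $\psi(u)\le s$, $\widehat{\psi}_s((re)^+)=(u(re)^+)^+$), and show that $ur$ is a witness for $e\in\operatorname{\mathrm{dom}}\widehat{\psi}_{st}$ returning the correct value. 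Indeed $\psi(ur)=\psi(u)\psi(r)\le st$ by Lemma~\ref{lem:lem1}\eqref{i:b5}; by \eqref{eq:mov_proj} one has $re=(re)^+r$, hence $u^*re=u^*(re)^+r=(re)^+r=re$, so by \eqref{eq:rule1} $(u^*r)^*e=(u^*re)^*=(re)^*=r^*e=e$, and $(u^*r)^*=(ur)^*$ by \eqref{eq:consequences}, giving $e\le(ur)^*$; and, using well-definedness, \eqref{eq:consequences} and $(re)^+\le r^+$,
\begin{equation*}
\widehat{\psi}_{st}(e)=(ure)^+=(u(re)^+r)^+=(u(re)^+r^+)^+=(u(re)^+)^+=\widehat{\psi}_s(\widehat{\psi}_t(e)).
\end{equation*}
Thus $\widehat{\psi}_s\widehat{\psi}_t\le\widehat{\psi}_{st}$, so $\widehat{\psi}$ is a premorphism. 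Finally, for (A3a): the inclusion $\supseteq$ is clear since a witness for $e\in\operatorname{\mathrm{dom}}\widehat{\psi}_t$ with $t\le s$ also witnesses $e\in\operatorname{\mathrm{dom}}\widehat{\psi}_s$; for $\subseteq$, given $e\in\operatorname{\mathrm{dom}}\widehat{\psi}_s$ with witness $r$, I would take $t=\psi(re)\le s$; then $(re)^*=r^*e=e$ shows $re$ witnesses $e\in\operatorname{\mathrm{dom}}\widehat{\psi}_t$, and $t^*=p(e)$ (because $p(e)\le\psi(r)^*$), so $e\in\operatorname{\mathrm{dom}}\widehat{\psi}_t\cap p^{-1}(t^*)$.

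For part~(2), conditions (A1), (A2), (A3), (A4), (PM1), (PM2), (PM3) go through verbatim with the witness condition $\psi(t)=s$ in place of $\psi(t)\le s$ (for (A2) one may instead just note that $\widetilde{\psi}_s$ is the restriction of the order isomorphism $\widehat{\psi}_s$ to the order ideal $\operatorname{\mathrm{dom}}\widetilde{\psi}_s$, with range the order ideal $\operatorname{\mathrm{ran}}\widetilde{\psi}_s$; in (PM1) one now has $\psi(ur)=\psi(u)\psi(r)=st$ exactly). The only genuine difference is that $\widetilde{\psi}$ satisfies (A3b) rather than (A3a): $\supseteq$ holds because $\operatorname{\mathrm{dom}}\widetilde{\psi}_s$ is an order ideal, and for $\subseteq$, if $e\in\operatorname{\mathrm{dom}}\widetilde{\psi}_s$ has witness $t$ then $t^*\in\operatorname{\mathrm{dom}}\widetilde{\psi}_s$, $p(t^*)=\psi(t)^*=s^*$ and $e\le t^*$, so $e\in(\operatorname{\mathrm{dom}}\widetilde{\psi}_s\cap p^{-1}(s^*))^{\downarrow}$. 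I expect the main obstacle to be precisely the (PM1) computation above --- establishing $e\le(ur)^*$ and the identity $(ure)^+=(u(re)^+)^+$ by correctly combining the one-sided axioms \eqref{eq:mov_proj}, \eqref{eq:rule1} and \eqref{eq:consequences}; everything else is bookkeeping with the definitions.
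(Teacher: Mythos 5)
Your proposal is correct and follows essentially the same route as the paper: the key (PM1) step composes the two witnesses exactly as the paper does (your chain $u^*re=re$ giving $e\le(ur)^*$ is the paper's $(vu)^*=(v^*u)^*\ge((ue)^+u)^*=(ue)^*\ge e$ in different clothing), and the (A3a)/(A3b) arguments coincide with the paper's. You are in fact slightly more complete, since you verify (A3) explicitly and spell out part (2), both of which the paper leaves to the reader.
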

		
		\begin{proof} (1) We show that $\widehat{\psi}$ is a premorphism. Let $s,t\in S$ and $e\in \operatorname{\mathrm{dom}}\widehat{\psi}_s\widehat{\psi}_t$. Then $e\leq u^*$ for some $u\in \psi^{-1}(t^{\downarrow})$ and  $(ue)^+\leq v^{*}$ for some $v\in \psi^{-1}(s^{\downarrow})$. Then 
			$\psi(vu)\leq st$ and also
			$$(vu)^* = (v^*u)^* \geq ((ue)^+u)^* = (ue)^* \geq e,$$ 
			so that $e\in \operatorname{\mathrm{dom}}\widehat{\psi}_{st}$. Moreover, $\widehat{\psi}_{st}(e) = (vue)^+ = (v(ue)^+)^+ = \widehat{\psi}_s\widehat{\psi}_t(e)$. Therefore, $\widehat{\psi}_{s}\widehat{\psi}_{t}\leq \widehat{\psi}_{st}$. Thus (PM1) holds. We now show  (PM2). Let $s\in S$. By Lemma \ref{lem:well_def}(5) $\widehat{\psi}_{s^*}$ is an idempotent. Since both $\widehat{\psi}_{s^*}$ and ${\widehat{\psi}_s}^{\,*}$  are idempotents and $\operatorname{\mathrm{dom}}\widehat{\psi}_s = \operatorname{\mathrm{dom}}{\widehat{\psi}_s}^{\,*}$, it suffices to show that  $\operatorname{\mathrm{dom}}\widehat{\psi}_s \subseteq \operatorname{\mathrm{dom}}\widehat{\psi}_{s^*}$. Let $e\in \operatorname{\mathrm{dom}}\widehat{\psi}_s$. Then $e\leq t^*$ where $\psi(t)\leq s$. But then $\psi(t^*) = \psi(t)^*\leq s^*$ so that $e\in \operatorname{\mathrm{dom}}\widehat{\psi}_{s^*}$. Condition (PM3) follows from a dual argument. Condition (A1) holds by the definition of  $\widehat{\psi}$. To show~(A2) we let $e,f \in \operatorname{\mathrm{dom}}\widehat{\psi}$ be such that $e\leq f$. Then $e\leq f\leq u^*$ for some $u\in \psi^{-1}(s^{\downarrow})$. We have $\widehat{\psi}_s(e)=(ue)^+ \leq (uf)^+$. Similarly one shows that if $e\leq f\leq u^+$ for some $u\in \psi^{-1}(s^{\downarrow})$ then $(eu)^*\leq (fu)^*$. Therefore, $\widehat{\psi}_s$ is an order automorphism. Condition~(A4) is immediate since $\psi$ is surjective. We finally check that~(A3a) holds. We need to show that, for $e\in P(T)$,
			$$
			e\in \operatorname{\mathrm{dom}}\widehat{\psi}_{s} \, \text{ if and only if } \, e\in \operatorname{\mathrm{dom}}\widehat{\psi}_{u} \cap \psi^{-1}(u^*) \text{ for some }  u\leq s.
			$$
			We first assume that $e\in \operatorname{\mathrm{dom}}\widehat{\psi}_{s}$ which means that $e\leq t^*$ where $t\in \psi^{-1}(s^{\downarrow})$. Putting $u=\psi(te)$ we have $u=\psi(t)\psi(e) \leq s$, 
			$u^*=\psi (te)^* = \psi ((te)^*) = \psi (t^*e) = \psi(e)$ so that $e\in \psi^{-1}(u^*)$, and also $e= (te)^*$ where $te \in \psi^{-1}(u^{\downarrow})$ so that $e\in \operatorname{\mathrm{dom}}\widehat{\psi}_{u}$. In the reverse direction, assuming that 
			$e\in \operatorname{\mathrm{dom}}\widehat{\psi}_{u} \cap \psi^{-1}(u^*)$  for some   $u\leq s$, we have that  $e\leq r^*$ where $\psi(r)\leq u$. It follows that $e\in \operatorname{\mathrm{dom}}\widehat{\psi}_{s}$.

			Part (2) is proved similarly.
		\end{proof}
		
		Proposition \ref{prop:psi_hat_tilde}(1) and Lemma \ref{lem:prop_hat_tilde1}(1) imply that $\widehat{\psi}$ is order-preserving.
		
		The premorphisms $\widehat{\psi}$ and $\widetilde{\psi}$ will be called the {\em upper underlying premorphism} and the {\em lower underlying premorphism} of $\psi$, respectively. 
		
		\begin{lemma} \label{lem:aux15} Let $s\sim t$ and $e \in \operatorname{\mathrm{dom}}\widetilde{\psi}_s \cap \operatorname{\mathrm{dom}}\widetilde{\psi}_t$ $($resp. $e \in \operatorname{\mathrm{dom}}\widehat{\psi}_s \cap \operatorname{\mathrm{dom}}\widehat{\psi}_t)$. Then $\widetilde{\psi}_s(e) = \widetilde{\psi}_t(e)$ $($resp. $\widehat{\psi}_s(e) = \widehat{\psi}_t(e))$.
		\end{lemma}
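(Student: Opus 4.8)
The plan is to reduce both assertions to a single one-line computation in $T$: if $u\sim v$ in $T$ and a projection $e\in P(T)$ satisfies $e\le u^*$ and $e\le v^*$, then $ue=ve$, and hence $(ue)^+=(ve)^+$. Granting this, the two cases are handled in exactly the same way. Pick representatives $u,v\in T$ with $e\le u^*$, $e\le v^*$ and with $\psi(u)\le s$, $\psi(v)\le t$ in the $\widehat{\psi}$-case, resp.\ $\psi(u)=s$, $\psi(v)=t$ in the $\widetilde{\psi}$-case. By Lemma~\ref{lem:well_def}(1),(2) the values $\widehat{\psi}_s(e)=(ue)^+$, $\widehat{\psi}_t(e)=(ve)^+$ (resp.\ $\widetilde{\psi}_s(e)=(ue)^+$, $\widetilde{\psi}_t(e)=(ve)^+$) do not depend on the choice of $u$ and $v$, so it suffices to show that $(ue)^+=(ve)^+$.

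The first step is to deduce $\psi(u)\sim\psi(v)$ in $S$. In the $\widetilde{\psi}$-case this is immediate, since $\psi(u)=s\sim t=\psi(v)$ by hypothesis. In the $\widehat{\psi}$-case it follows from Lemma~\ref{lem:aux14}(2) applied to $s\sim t$, $\psi(u)\le s$, $\psi(v)\le t$. The second step is to upgrade this to $u\sim v$ in $T$: since $\psi$ is a proper (in particular surjective) morphism, Lemma~\ref{lem:proper_compatibility} gives $\psi(u)\sim\psi(v)\iff u\sim v$, so $u\sim v$.

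The final step is the key computation. From $u\sim v$ we have $uv^*=vu^*$, and since $e$, $u^*$, $v^*$ all lie in the semilattice $P(T)$ with $e\le u^*$ and $e\le v^*$, we have $u^*e=v^*e=e$. Therefore $ue=u(v^*e)=(uv^*)e=(vu^*)e=v(u^*e)=ve$, whence $(ue)^+=(ve)^+$, which is exactly $\widehat{\psi}_s(e)=\widehat{\psi}_t(e)$ (resp.\ $\widetilde{\psi}_s(e)=\widetilde{\psi}_t(e)$).

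I do not expect any genuine obstacle here: all the substantive content has already been packaged into Lemmas~\ref{lem:aux14}, \ref{lem:proper_compatibility} and \ref{lem:well_def}, and what remains is the three-term chain $ue=uv^*e=vu^*e=ve$. The only point that must be stated with a little care is the reduction at the outset, namely that the values $\widehat{\psi}_s(e)$ and $\widetilde{\psi}_s(e)$ are independent of the chosen representatives, which is precisely the well-definedness established in Lemma~\ref{lem:well_def}(1),(2).
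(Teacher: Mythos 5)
Your proof is correct, but it takes a different route from the paper's. The paper argues at the level of $\mathcal{I}(P(T))$: since $\widetilde{\psi}$ and $\widehat{\psi}$ are premorphisms, Lemma~\ref{lem:prem_compatibility} gives $\widetilde{\psi}_s\sim\widetilde{\psi}_t$ (and likewise for $\widehat{\psi}$, using also that $\widetilde{\psi}_r\leq\widehat{\psi}_r$ implies compatibility), and then it invokes the standard fact that compatible partial bijections agree on the intersection of their domains. You instead unwind the definitions and work concretely in $T$, transporting the compatibility $s\sim t$ up to representatives $u\sim v$ via Lemma~\ref{lem:aux14}(2) and Lemma~\ref{lem:proper_compatibility}, and finishing with the computation $ue=uv^*e=vu^*e=ve$; this is essentially the same device the paper already uses inside the proof of Lemma~\ref{lem:well_def}(1),(2), now applied across two base points. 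Both arguments are short and correct: the paper's buys brevity by reusing general facts about compatibility in symmetric inverse semigroups, while yours is more self-contained and makes the dependence on properness of $\psi$ (through Lemma~\ref{lem:proper_compatibility}) completely explicit. All steps in your argument check out, including the reduction to well-definedness at the outset.
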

		
		\begin{proof} Since $s\sim t$ we have $\widetilde{\psi}_s \sim \widetilde{\psi}_t \sim  \widehat{\psi}_t \sim  \widehat{\psi}_s$, by Lemma \ref{lem:prem_compatibility} and Lemma \ref{lem:well_def}(3). The statement follows since compatible partial bijections act in the same way on the intersection of their domains (see \cite[Section 1.2, Proposition 1(2)]{Lawson:book}). 
		\end{proof}

		\subsection{Decomposition of a proper extension into a partial action product} 
		
		Let $\psi\colon T\to S$ be a proper morphism between restriction semigroups. We put $p= \psi|_{P(T)}$ and let $\widehat{\psi}$ and $\widetilde{\psi}$ be the two  underlying premorphisms of $\psi$. Proposition~\ref{prop:psi_hat_tilde} ensures that we can form the partial action products $P(T)\rtimes^p_{\widehat{\psi}} S$ and $P(T)\rtimes^p_{\widetilde{\psi}} S$. Lemma \ref{lem:well_def}\eqref{ij2:3} implies that
		$P(T)\rtimes^p_{\widehat{\psi}} S = P(T)\rtimes^p_{\widetilde{\psi}} S$ as restriction semigroups.

		\begin{theorem}\label{th:isom_epsilon} Let $\psi\colon T\to S$ be a proper morphism between restriction semigroups. Then the following isomorphism holds:
			$$T \,\,\, \simeq \,\,\, P(T)\rtimes^p_{\widehat{\psi}} S \,\,\,  = \,\,\, P(T)\rtimes^p_{\widetilde{\psi}} S .$$
		\end{theorem}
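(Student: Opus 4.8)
The plan is to write down an explicit isomorphism and verify it directly. Define $\Theta\colon T\to P(T)\rtimes^p_{\widetilde\psi}S$ by $\Theta(t)=(t^+,\psi(t))$; by the equality $P(T)\rtimes^p_{\widehat\psi}S=P(T)\rtimes^p_{\widetilde\psi}S$ recorded just before the statement it is immaterial which of the two partial action products one works with, and working with $\widetilde\psi$ keeps the surjectivity step cleanest. First I would check that $\Theta$ is well defined: $p(t^+)=\psi(t^+)=\psi(t)^+$, and since $t^+\le t^+$ with $\psi(t)=\psi(t)$, the description of $\operatorname{\mathrm{ran}}\widetilde\psi_s$ (taking the witness to be $t$ itself) gives $t^+\in\operatorname{\mathrm{ran}}\widetilde\psi_{\psi(t)}$; hence $(t^+,\psi(t))$ meets both membership conditions defining $P(T)\rtimes^p_{\widetilde\psi}S$. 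I would also record that $\Psi\circ\Theta=\psi$, where $\Psi$ is the proper morphism of \eqref{eq:def_Psi}, so $\Theta$ is a morphism over $S$.

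Next I would verify that $\Theta$ preserves the three operations; since both sides are already known to be restriction semigroups, associativity need not be rechecked. For $^+$ this is immediate from \eqref{eq:def_star_plus}: $\Theta(t)^+=(t^+,\psi(t)^+)=((t^+)^+,\psi(t^+))=\Theta(t^+)$. For $^*$, the explicit formula for $\widetilde\psi_s^{-1}$ applied with the witness $t$ gives $\widetilde\psi_{\psi(t)}^{-1}(t^+)=(t^+t)^*=t^*$, so by \eqref{eq:def_star_plus} $\Theta(t)^*=(t^*,\psi(t)^*)=((t^*)^+,\psi(t^*))=\Theta(t^*)$. For the product, expanding $\Theta(t)\Theta(r)$ via \eqref{product-in-Y-rt-S} and using $\widetilde\psi_{\psi(t)}^{-1}(t^+)=t^*$ yields first coordinate $\widetilde\psi_{\psi(t)}(t^*\wedge r^+)$; as meet in $P(T)$ is multiplication, $t^*\wedge r^+=t^*r^+\le t^*$, and the defining formula for $\widetilde\psi$ (again with witness $t$) turns this into $(t\,t^*r^+)^+=(tr^+)^+=(tr)^+$ by \eqref{eq:consequences}. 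Hence $\Theta(t)\Theta(r)=((tr)^+,\psi(tr))=\Theta(tr)$.

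Finally I would establish bijectivity. Injectivity: if $\Theta(t)=\Theta(r)$ then $t^+=r^+$ and $\psi(t)=\psi(r)$; properness of $\psi$ forces $t\sim r$, and then $t=r$ by Lemma~\ref{lem:lem1}\eqref{i:b8}. Surjectivity: given $(y,s)\in P(T)\rtimes^p_{\widetilde\psi}S$, the membership conditions read $\psi(y)=p(y)=s^+$ and $y\in\operatorname{\mathrm{ran}}\widetilde\psi_s$, so there is $t\in T$ with $y\le t^+$ and $\psi(t)=s$; then $u:=yt$ satisfies $u^+=(yt)^+=yt^+=y$ (by \eqref{eq:rule1} and $y\le t^+$) and $\psi(u)=\psi(y)\psi(t)=s^+s=s$, whence $\Theta(u)=(y,s)$. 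Thus $\Theta$ is a bijective morphism of restriction semigroups, hence an isomorphism, which gives $T\simeq P(T)\rtimes^p_{\widetilde\psi}S$; the remaining equality in the statement is the one recorded before it. I do not expect any single step to be a genuine obstacle: the whole content lies in choosing the map $t\mapsto(t^+,\psi(t))$, after which the verifications are routine unwindings of Lemma~\ref{lem:well_def} and of the formulas \eqref{product-in-Y-rt-S}--\eqref{eq:def_star_plus}; the one place where the seemingly redundant constraint $p(y)=s^+$ is actually needed is in constructing the preimage $u=yt$ in the surjectivity argument.
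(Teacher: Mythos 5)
Your proof is correct and takes essentially the same route as the paper: the same map $t\mapsto(t^+,\psi(t))$, with the same verifications of bijectivity (via properness) and preservation of the operations. The only difference is that you phrase everything through $\widetilde{\psi}$ rather than $\widehat{\psi}$, which slightly shortens the surjectivity step since the witness then satisfies $\psi(t)=s$ exactly; this is a cosmetic variation, as the two partial action products coincide.
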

		
		\begin{proof} We abbreviate $\psi\colon T\to S$ by $(\psi, T)$ and define the map ${\widehat{\eta}}_{(\psi, T)}\colon T\to P(T)\rtimes^p_{\widehat{\psi}} S$ by ${\widehat{\eta}}_{(\psi, T)}(t)=(t^+,\psi(t))$.  Applying Proposition \ref{prop:proper} to $\psi$ it follows that ${\widehat{\eta}}_{(\psi, T)}$ is injective. To show that it is surjective, let $(e,s)\in P(T)\rtimes^p_{\widehat{\psi}} S$. Then $e\in \operatorname{\mathrm{ran}}\widehat{\psi}_s$ and $\psi(e)=s^+$. Hence $e\leq t^+$ for some $t$ satisfying $\psi(t)\leq s$. But then $s^+=\psi(e)\leq \psi(t^+)\leq s^+$ yielding $\psi(e)=\psi(t^+)$. Putting $t_1=et$, we have $e=t_1^+$. In addition, $\psi(t_1)^+=s^+$ and $\psi(t_1)\leq s$ imply that $\psi(t_1)=s$. It follows that $(e,s) = (t_1^+, \psi(t_1))$, and we have shown that ${\widehat{\eta}}_{(\psi, T)}$ is a bijection. 
			 For all $t,u\in T$, applying the definition of ${\widehat{\psi}}_{\psi(t)}$, we have:
			\begin{align*}
			{\widehat{\eta}}_{(\psi, T)}(t){\widehat{\eta}}_{(\psi, T)}(u) &= (t^+, \psi(t))(u^+,\psi(u))     & \\
			&= ({\widehat{\psi}}_{\psi(t)}({\widehat{\psi}}^{-1}_{\psi(t)}(t^+)\wedge u^+), \psi(t)\psi(u))    & (\text{by } \eqref{product-in-Y-rt-S})\\
			&= ({\widehat{\psi}}_{\psi(t)}(t^*u^+), \psi(tu))  =((tt^*u^+)^+, \psi(tu))   & \\
			& = ((tu)^+, \psi(tu)) = {\widehat{\eta}}_{(\psi, T)}(tu). &  (\text{by } \eqref{eq:axioms:star} \text{ and } \eqref{eq:consequences})
			\end{align*}
			Furthermore, for all $t\in T$ we have:
			\begin{align*}
			{\widehat{\eta}}_{(\psi, T)}(t)^*  & = (t^+, \psi(t))^* = ({\widehat{\psi}}^{-1}_{\psi(t)}(t^+), \psi(t)^*)    & (\text{by } \eqref{eq:def_star_plus})\\
			&= (t^*, \psi(t^*)) = {\widehat{\eta}}_{(\psi, T)}(t^*),
			\end{align*}
			\begin{align*}
			{\widehat{\eta}}_{(\psi, T)}(t)^+  & = (t^+, \psi(t))^+ = (t^+, \psi(t)^+)    & (\text{by } \eqref{eq:def_star_plus})\\
			&= (t^+, \psi(t^+)) = {\widehat{\eta}}_{(\psi, T)}(t^+).
			\end{align*}
			It follows that ${\widehat{\eta}}_{(\psi, T)}$ is a morphism of restriction semigroups.  \end{proof}
		
		\begin{remark}  (1)  Let $S$ be an inverse semigroup, $(S, X, Y )$  a fully strict O'Carroll's triple \cite{OC77, Kh17} and $L_m(S,X,Y)$ the corresponding (inverse) $L$-semigroup. Let $\psi\colon L_m(S,X,Y)\to S$ be the induced proper morphism.  Because $\widehat{\psi}$ is order-preserving, it is globalizable \cite{GH09}, and it can be verified, along the lines of \cite[Section 4]{Kh17}, that the original O'Carroll's triple $(S, X, Y )$ can be reconstructed from $\widehat{\psi}$ applying globalization. Thus the isomorphism $T  \simeq P(T)\rtimes^p_{\widehat{\psi}} S$ given in Theorem \ref{th:isom_epsilon} provides the partial action variant of the O'Carroll's structure result \cite{OC77}.
			
			(2)   Premorphisms $S\to {\mathcal{I}}(X)$ satisfying (A1)--(A4) were considered in \cite{Kh17} for the case where $S$ is an inverse semigroup and were termed {\em fully strict} there which goes back to O'Carroll's work \cite{OC77}.
			
			(3) The underlying premorphism of a proper extension $\psi\colon T\to S$ ($T$ and $S$ being inverse semigroups) constructed in \cite[Theorem 3.4]{Kh17} coincides with our $\widetilde{\psi}$. It is noted in \cite{Kh17} that it is not always globalizable. Theorem \ref{th:isom_epsilon} shows, however, that the partial action in the formulation of \cite[Theorem 3.4]{Kh17} can always be chosen globalizable: this is the one corresponding to the premorphism $\widehat{\psi}$, which does not appear in \cite{Kh17}. Thus the claim of  Theorem \ref{th:isom_epsilon} that $T  \simeq P(T)\rtimes^p_{\widetilde{\psi}} S$ is the extension of \cite[Theorem 3.4]{Kh17} from inverse to restriction semigroups, whereas  the construction of $\widehat{\psi}$ and the claim that $T  \simeq P(T)\rtimes^p_{\widehat{\psi}} S$ are new already in the context of inverse semigroups and  provide a direct connection with O'Carroll's work \cite{OC77} (see part~(1) above).
			
		\end{remark} 
		
		\begin{remark}
			Let $S$ in the formulation of Theorem \ref{th:isom_epsilon} be a monoid. Then $T$ is a proper restriction semigroup and $S\simeq T/\sigma$. It is easy to see that  $\widehat{\psi} = \widetilde{\psi}$ coincides with the underlying premorphism of $T$ from \cite{Kud15} (which first appeared in \cite{CG} as a certain double action). Thus in this case Theorem \ref{th:isom_epsilon} specializes to the Cornock-Gould result \cite{CG} on the structure of proper restriction semigroups (see also \cite{Kud15}).
		\end{remark}
		
		\section{Classes of  proper extensions}\label{s:classes}
		Throughout this section, $\psi\colon T\to S$ is a proper morphism between restriction semigroups.
		\subsection{Order-proper extensions}
		We call $\psi$ {\em order-proper} if the premorphism $\widetilde{\psi}$ is order-preserving.
 Recall that the premorphism $\widehat{\psi}$ is always order-preserving.
				
		\begin{proposition}\label{prop:psi_coincide} Let $\psi\colon T\to S$ be a proper morphism between restriction semigroups. The following statements are equivalent:
			\begin{enumerate}[(1)]
				\item $\psi$ is order-proper;
				\item $\widetilde{\psi} = \widehat{\psi}$;
				\item  for all $s,t\in S$ such that $s\leq t$ and all $u\in \psi^{-1}(s)$ there is $v\in \psi^{-1}(t)$ such that $u\leq v$;
				\item $\psi^{-1}(s^{\downarrow}) = (\psi^{-1}(s))^{\downarrow}$, for all $s\in S$.
			\end{enumerate}
		\end{proposition}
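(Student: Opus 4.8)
The plan is to prove the chain $(1)\Leftrightarrow(2)$, $(2)\Rightarrow(3)$, $(3)\Leftrightarrow(4)$ and $(4)\Rightarrow(1)$, the substance being concentrated in $(2)\Rightarrow(3)$. Throughout I will use two elementary facts: $\widehat{\psi}$ is always order-preserving, and, by Lemma~\ref{lem:well_def}\eqref{ij2:3}, $\widetilde{\psi}_s$ is the restriction of $\widehat{\psi}_s$ to $\operatorname{\mathrm{dom}}\widetilde{\psi}_s$ for every $s\in S$. In particular $\operatorname{\mathrm{dom}}\widetilde{\psi}_s\subseteq\operatorname{\mathrm{dom}}\widehat{\psi}_s$ always holds, and $\widetilde{\psi}=\widehat{\psi}$ is equivalent to $\operatorname{\mathrm{dom}}\widetilde{\psi}_s=\operatorname{\mathrm{dom}}\widehat{\psi}_s$ for all $s$.

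For $(1)\Leftrightarrow(2)$: the implication $(2)\Rightarrow(1)$ is immediate since $\widehat{\psi}$ is order-preserving. For $(1)\Rightarrow(2)$, given $e\in\operatorname{\mathrm{dom}}\widehat{\psi}_s$ one writes $e\leq b^*$ with $\psi(b)\leq s$; then $e\in\operatorname{\mathrm{dom}}\widetilde{\psi}_{\psi(b)}$, and order-preservation of $\widetilde{\psi}$ together with $\psi(b)\leq s$ gives $\widetilde{\psi}_{\psi(b)}\leq\widetilde{\psi}_s$, hence $e\in\operatorname{\mathrm{dom}}\widetilde{\psi}_s$. So $\operatorname{\mathrm{dom}}\widehat{\psi}_s\subseteq\operatorname{\mathrm{dom}}\widetilde{\psi}_s$, the two domains coincide, and $(2)$ follows.

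For $(2)\Rightarrow(3)$ --- the heart of the argument --- let $s\leq t$ in $S$ and $u\in\psi^{-1}(s)$. Since $\psi(u)=s\leq t$, the element $u$ witnesses $u^*\in\operatorname{\mathrm{dom}}\widehat{\psi}_t$, so by $(2)$ we have $u^*\in\operatorname{\mathrm{dom}}\widetilde{\psi}_t$; thus there is $r\in T$ with $\psi(r)=t$ and $u^*\leq r^*$. Now $\psi(u)=s\leq t=\psi(r)$ gives $\psi(u)\sim\psi(r)$ by Lemma~\ref{lem:lem1}\eqref{i:b61}, hence $u\sim r$ by properness of $\psi$ and Lemma~\ref{lem:proper_compatibility}; combining $u\sim r$ with $u^*\leq r^*$ and Lemma~\ref{lem:lem1}\eqref{i:b7} yields $u\leq r$. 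Taking $v=r$ proves $(3)$. The one genuinely nontrivial point --- and the step I expect to be the main obstacle --- is precisely this: membership of $u^*$ in $\operatorname{\mathrm{dom}}\widetilde{\psi}_t$ only produces some $r$ above $t$ with $u^*\leq r^*$, and one must upgrade it to $u\leq r$; this is where properness of $\psi$ is essential.

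Finally, $(3)\Leftrightarrow(4)$ is a short unwinding of the definition of a downset: a morphism of restriction semigroups preserves the order, so $(\psi^{-1}(s))^{\downarrow}\subseteq\psi^{-1}(s^{\downarrow})$ always holds, while the reverse inclusion applied to an arbitrary $u$ with $\psi(u)\leq s$ is exactly the assertion of $(3)$ with $\psi(u)$ in place of $s$. And for $(4)\Rightarrow(1)$: given $s\leq t$ and $e\in\operatorname{\mathrm{dom}}\widetilde{\psi}_s$, write $e\leq b^*$ with $\psi(b)=s$; by $(4)$, $b\in\psi^{-1}(s)\subseteq\psi^{-1}(t^{\downarrow})=(\psi^{-1}(t))^{\downarrow}$, so $b\leq v$ for some $v$ with $\psi(v)=t$, whence $e\leq b^*\leq v^*$ and $e\in\operatorname{\mathrm{dom}}\widetilde{\psi}_t$. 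Since $s\leq t$ implies $s\sim t$, Lemma~\ref{lem:aux15} shows that $\widetilde{\psi}_s$ and $\widetilde{\psi}_t$ agree on $\operatorname{\mathrm{dom}}\widetilde{\psi}_s$, so $\widetilde{\psi}_s\leq\widetilde{\psi}_t$; hence $\widetilde{\psi}$ is order-preserving, i.e., $\psi$ is order-proper.
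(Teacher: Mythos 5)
Your proof is correct and follows essentially the same route as the paper: the cycle $(1)\Rightarrow(2)\Rightarrow(3)\Rightarrow(4)\Rightarrow(1)$, with the same key steps — using properness via Lemma~\ref{lem:proper_compatibility} together with Lemma~\ref{lem:lem1}\eqref{i:b7} to upgrade $u^*\leq r^*$ to $u\leq r$ in $(2)\Rightarrow(3)$, and Lemma~\ref{lem:aux15} to reduce order-preservation of $\widetilde{\psi}$ to an inclusion of domains in $(4)\Rightarrow(1)$. No gaps.
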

		
		\begin{proof} (1) $\Rightarrow$ (2) We assume that $\widetilde{\psi}$ is order-preserving and let $s\in S$.  It is enough to show that $\operatorname{\mathrm{dom}}\widehat{\psi}_s \subseteq \operatorname{\mathrm{dom}}\widetilde{\psi}_s$. Let $e\in \operatorname{\mathrm{dom}}\widehat{\psi}_s$. Then $e\leq u^*$ for some $u\in T$ satisfying $\psi(u)\leq s$. Then $e\in  \operatorname{\mathrm{dom}}\widetilde{\psi}_{\psi(u)}$. The assumption implies that $\operatorname{\mathrm{dom}}\widetilde{\psi}_{\psi(u)}\subseteq \operatorname{\mathrm{dom}}\widetilde{\psi}_{s}$ whence $e\in \operatorname{\mathrm{dom}}\widetilde{\psi}_{s}$, as needed.
			
			(2) $\Rightarrow$ (3) 
			Let $s,t\in S$ be such that $s\leq t$ and let $u\in \psi^{-1}(s)$. Then $u^*\in \operatorname{\mathrm{dom}}\widetilde{\psi}_s = \operatorname{\mathrm{dom}}\widehat{\psi}_s$. It follows that $u^*\in \operatorname{\mathrm{dom}}\widehat{\psi}_t = \operatorname{\mathrm{dom}}\widetilde{\psi}_t$, which shows that $u^* \leq v^*$ for some $v\in \psi^{-1}(t)$. Since $\psi(v) \sim \psi(u)$, we also have $v\sim u$, by Lemma \ref{lem:proper_compatibility}. Therefore $u\leq v$.
			
			(3) $\Rightarrow$ (4) It is easy to see that the inclusion $(\psi^{-1}(s))^{\downarrow}\subseteq \psi^{-1}(s^{\downarrow})$ always holds and assuming part (3), the reverse inclusion holds, too.
			
			(4) $\Rightarrow$ (1) We assume that condition in part (4) holds and let us show that $\widetilde{\psi}$ is order-preserving. Let $s,t\in S$ be such that $s\leq t$. We show that $\widetilde{\psi}_s\leq \widetilde{\psi}_t$. In view of Lemma \ref{lem:aux15}, it  suffices to show that $\operatorname{\mathrm{dom}}\widetilde{\psi}_s\subseteq  \operatorname{\mathrm{dom}}\widetilde{\psi}_t$. Let $e\in \operatorname{\mathrm{dom}}\widetilde{\psi}_s$. Then $e\leq u^*$ for some $u\in \psi^{-1}(s)$. By assumption, there is $v\in \psi^{-1}(t)$ satisfying $u\leq v$. Since $e\leq u^*\leq v^*$ it follows that $e\in \operatorname{\mathrm{dom}}\widetilde{\psi}_t$, as needed. 
		\end{proof}

		\subsection{Extra proper extensions}
		
		We establish a connection between local strongness of  $\widehat{\psi}$ and $\widetilde{\psi}$.
		
		\begin{theorem}\label{th:extra_locally_strong}
			Let $\psi\colon T\to S$ be a proper morphism between restriction semigroups. The following statements are equivalent:
			\begin{enumerate}[(1)]
				\item $\widehat{\psi}$  satisfies condition (LSr) (respectively (LSl));
				\item $\widehat{\psi}$  satisfies condition (Sr) (respectively (Sl));
				\item $\widetilde{\psi}$ satisfies condition (LSr) (respectively (LSl)).
				
			\end{enumerate}
		\end{theorem}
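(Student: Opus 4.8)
The plan is to handle $(1)\Leftrightarrow(2)$ immediately and then prove $(2)\Leftrightarrow(3)$ by two domain-chasing arguments; the ``respectively'' clauses will follow by the evident left--right dual reasoning. Now $(2)\Rightarrow(1)$ is trivial, since (Sr) implies (LSr) by definition, and $(1)\Rightarrow(2)$ follows from Theorem~\ref{th:strong_restr}(1) because $\widehat\psi$ is order-preserving (Proposition~\ref{prop:psi_hat_tilde}(1) and Lemma~\ref{lem:prop_hat_tilde1}(1)). So it remains to prove $(2)\Rightarrow(3)$ and $(3)\Rightarrow(2)$, and I would use two reformulations. By Lemma~\ref{prop:inverse}, $\widetilde\psi$ satisfies (LSr) iff it satisfies (LSr$'$). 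Moreover $\widehat\psi_s\widehat\psi_t\le\widehat\psi_{st}\widehat\psi_t^*$ always (and likewise for $\widetilde\psi$), with equality precisely when their domains coincide; hence (Sr) for $\widehat\psi$ says exactly that $\widehat\psi_t(e)\in\operatorname{\mathrm{dom}}\widehat\psi_s$ whenever $e\in\operatorname{\mathrm{dom}}\widehat\psi_{st}\cap\operatorname{\mathrm{dom}}\widehat\psi_t$, while (LSr$'$) for $\widetilde\psi$ says that $\widetilde\psi_t(e)\in\operatorname{\mathrm{dom}}\widetilde\psi_s$ whenever $s^*\le t^+$ and $e\in\operatorname{\mathrm{dom}}\widetilde\psi_{st}\cap\operatorname{\mathrm{dom}}\widetilde\psi_t$. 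I would also use repeatedly that $\widetilde\psi_s=\widehat\psi_s|_{\operatorname{\mathrm{dom}}\widetilde\psi_s}$ (Lemma~\ref{lem:well_def}(3)), that each $\widehat\psi_s$ is an order isomorphism between order ideals of $P(T)$, so $\widetilde\psi_s$ is order-preserving on its domain, that $\operatorname{\mathrm{dom}}\widetilde\psi_s$ is a downset, that $\widehat\psi$ is order-preserving, Lemma~\ref{lem:q}(1) (valid for both $(\widehat\psi,p,P(T))$ and $(\widetilde\psi,p,P(T))$), Lemma~\ref{lem:aux14}(1), and Lemma~\ref{lem:lem1}.

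For $(2)\Rightarrow(3)$, fix $s,t$ with $s^*\le t^+$ and $e\in\operatorname{\mathrm{dom}}\widetilde\psi_{st}\cap\operatorname{\mathrm{dom}}\widetilde\psi_t$; pick $\tilde v,\tilde u\in T$ with $\psi(\tilde v)=st$, $\psi(\tilde u)=t$, $e\le\tilde v^*\wedge\tilde u^*$. First I would reduce to $p(e)=(st)^*$: replace $e$ by $e_1:=\tilde v^*\wedge\tilde u^*\ge e$, which still lies in $\operatorname{\mathrm{dom}}\widetilde\psi_{st}\cap\operatorname{\mathrm{dom}}\widetilde\psi_t$ and has $p(e_1)=(st)^*\wedge t^*=(st)^*$ (since $(st)^*=(s^*t)^*\le t^*$); because $\widetilde\psi_t(e)\le\widetilde\psi_t(e_1)$ and $\operatorname{\mathrm{dom}}\widetilde\psi_s$ is a downset, it suffices to treat $e_1$. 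Assuming $p(e)=(st)^*$, Lemma~\ref{lem:q}(1) with \eqref{eq:mov_proj}, \eqref{eq:rule1}, \eqref{eq:consequences} gives $p(\widetilde\psi_t(e))=(t\,p(e))^+=(t(st)^*)^+=(s^*t)^+=s^*t^+=s^*$, the last step using $s^*\le t^+$. Now (2) applies: $f:=\widehat\psi_t(e)=\widetilde\psi_t(e)$ lies in $\operatorname{\mathrm{dom}}\widehat\psi_s$, so $f\le w^*$ for some $w\in T$ with $\psi(w)\le s$. The key point is that $s^*=p(f)\le\psi(w)^*\le s^*$ (the last inequality by Lemma~\ref{lem:lem1}\eqref{i:b6}), forcing $\psi(w)^*=s^*$; together with $\psi(w)\le s$ (hence $\psi(w)\sim s$) Lemma~\ref{lem:lem1}\eqref{i:b8} yields $\psi(w)=s$, so $f\in\operatorname{\mathrm{dom}}\widetilde\psi_s$, as required.

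For $(3)\Rightarrow(2)$, fix $s,t$ and $e\in\operatorname{\mathrm{dom}}\widehat\psi_{st}\cap\operatorname{\mathrm{dom}}\widehat\psi_t$; pick $\tilde u,\tilde v\in T$ with $t':=\psi(\tilde u)\le t$, $s'':=\psi(\tilde v)\le st$, $e\le\tilde u^*\wedge\tilde v^*$, and set $f:=\widehat\psi_t(e)=\widetilde\psi_{t'}(e)$. Then $p(f)=(t'\,p(e))^+\le t'^+$, and since $p(e)\le s''^*\le(st)^*$ and $t'\,p(e)=t\,p(e)$ (because $t'=t\,t'^*$ and $p(e)\le t'^*$), Lemma~\ref{lem:aux14}(1) gives $p(f)\le s^*$. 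I would then set $u:=s\,p(f)\le s$, so $u^*=s^*\,p(f)=p(f)\le t'^+$, and verify $e\in\operatorname{\mathrm{dom}}\widetilde\psi_{ut'}$: by \eqref{eq:mov_proj}, $ut'=s\,p(f)\,t'=s\,t'\,p(e)=s\,t\,p(e)=s''\,p(e)$ (using $t'=t\,t'^*$, $s''=(st)\,s''^*$ and $p(e)$ below $t'^*$ and $s''^*$), while $\tilde v e$ satisfies $\psi(\tilde v e)=s''\,p(e)=ut'$ and $(\tilde v e)^*=e$. Applying (3) as (LSr$'$) to the pair $(u,t')$ gives $\widetilde\psi_u\widetilde\psi_{t'}=\widetilde\psi_{ut'}\widetilde\psi_{t'}^*$; since $e$ lies in the domain of the right-hand side it lies in that of the left, i.e. $f=\widetilde\psi_{t'}(e)\in\operatorname{\mathrm{dom}}\widetilde\psi_u\subseteq\operatorname{\mathrm{dom}}\widehat\psi_u\subseteq\operatorname{\mathrm{dom}}\widehat\psi_s$ (the last inclusion since $u\le s$ and $\widehat\psi$ is order-preserving). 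Hence $\widehat\psi_t(e)\in\operatorname{\mathrm{dom}}\widehat\psi_s$, and (2) follows.

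The hard part, I expect, is $(2)\Rightarrow(3)$: in general $\operatorname{\mathrm{dom}}\widetilde\psi_s$ is strictly smaller than $\operatorname{\mathrm{dom}}\widehat\psi_s$ (they coincide exactly when $\psi$ is order-proper, by Proposition~\ref{prop:psi_coincide}), so $f\in\operatorname{\mathrm{dom}}\widehat\psi_s$ cannot be promoted to $f\in\operatorname{\mathrm{dom}}\widetilde\psi_s$ in isolation; the reduction to $p(e)=(st)^*$ --- which makes $p(f)=s^*$ and so pins the image of the witness $w$ down to exactly $s$ --- is precisely what makes the argument go through, and it is here that the hypothesis $s^*\le t^+$ is used essentially. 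In $(3)\Rightarrow(2)$ the subtle steps are the choice $u=s\,p(f)$ and the absorption identities $t'\,p(e)=t\,p(e)$ and $s''\,p(e)=st\,p(e)$, which together give $ut'=s''\,p(e)$ and hence $e\in\operatorname{\mathrm{dom}}\widetilde\psi_{ut'}$. The statements with (Sl) and (LSl) follow by the left--right dual arguments, using ranges in place of domains, Lemma~\ref{lem:q}(2), and the dual of Lemma~\ref{lem:aux14}(1).
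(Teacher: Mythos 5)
Your proof is correct and follows essentially the same route as the paper's: the equivalence (1)$\Leftrightarrow$(2) via Theorem~\ref{th:strong_restr} and the order-preservation of $\widehat{\psi}$, followed by two domain-chasing arguments that first normalize the witness projections (your $e_1=\tilde v^*\wedge\tilde u^*$ plays the role of the paper's reduction to $e=u_1^*=v_1^*$) and then use properness together with Lemma~\ref{lem:lem1}\eqref{i:b8} to pin the relevant preimages down exactly. Your endgame for (2)$\Rightarrow$(3) --- deducing $\psi(w)=s$ directly from $p(f)=s^*$ rather than reconstructing the factorization $v_1=wu_1$ as the paper does --- and your systematic use of (LSr$'$) in place of (LSr) are mild but genuine streamlinings of the same argument.
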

		
		\begin{proof} Since $\widehat{\psi}$ is order-preserving, the equivalence (1) $\Longleftrightarrow$ (2) follows from Theorem \ref{th:strong_restr}.
			
			(1) $\Rightarrow$ (3)
			We assume that $\widehat{\psi}$ satisfies (LSr) and show that so does $\widetilde{\psi}$. For $s,t\in S$ we show that $\widetilde{\psi}_{st^+}\widetilde{\psi}_t = \widetilde{\psi}_{st}\widetilde{\psi}_t^{\,*}$. Note that $\widetilde{\psi}_{st^+}\widetilde{\psi}_t = \widetilde{\psi}_{st^+}\widetilde{\psi}_t\widetilde{\psi}_t^{\,*} \leq \widetilde{\psi}_{st}\widetilde{\psi}_t^{\,*}$ holds because $\widetilde{\psi}$ is a premorphism. It is thus enough to prove that 
			$$\operatorname{\mathrm{dom}}\widetilde{\psi}_{st}\widetilde{\psi}_t^{\,*} \subseteq \operatorname{\mathrm{dom}}\widetilde{\psi}_{st^+}\widetilde{\psi}_t.$$
			Let $e\in \operatorname{\mathrm{dom}}\widetilde{\psi}_{st}\widetilde{\psi}_t^{\,*}$. Since $\operatorname{\mathrm{dom}}\widetilde{\psi}_t^{\,*} = \operatorname{\mathrm{dom}}\widetilde{\psi}_t$ and $\widetilde{\psi}_t^{\,*}$ acts identically on its domain, it follows that $e\in \operatorname{\mathrm{dom}}\widetilde{\psi}_t \cap \operatorname{\mathrm{dom}}\widetilde{\psi}_{st}$. Then $e\leq u^*$ where $\psi(u) = t$ and $e\leq v^*$ where $\psi(v)=st$. Hence $e\leq u^*v^* = (uv^*)^*=(vu^*)^*$. Put $u_1=uv^*$ and $v_1 = vu^*$. Then $e\leq u_1^*= v_1^*$ and $\psi(u_1)= t(st)^*$, $\psi(v_1)=st$. If we show that $u_1^* \in \operatorname{\mathrm{dom}}\widetilde{\psi}_{st^+}\widetilde{\psi}_t$ then $e\in \operatorname{\mathrm{dom}}\widetilde{\psi}_{st^+}\widetilde{\psi}_t$ as well because $\operatorname{\mathrm{dom}}\widetilde{\psi}_{st^+}\widetilde{\psi}_t$ is an order ideal. Therefore, without loss of generality we can assume that $e=u_1^*= v_1^*$. To show that $e\in \operatorname{\mathrm{dom}}\widetilde{\psi}_{st^+}\widetilde{\psi}_t$ it suffices to show that $\widetilde{\psi}_t(e) \in \operatorname{\mathrm{dom}}\widetilde{\psi}_{st^+}$. Since $\widehat{\psi}$ satisfies condition (LSr) and $e\in \operatorname{\mathrm{dom}}\widehat{\psi}_{st}\widehat{\psi}_t^{\,*}$ it follows that
			$e\in \operatorname{\mathrm{dom}}\widehat{\psi}_{st^+}\widehat{\psi}_t$, that is, $\widetilde{\psi}_t(e) = \widehat{\psi}_t(e) \in \operatorname{\mathrm{dom}}\widehat{\psi}_{st^+}$. By definition, this means that $\widetilde{\psi}_t(e)\leq w^*$ where $\psi(w)\leq st^+$.
			Let $p=\psi(w)$. Note that $pt\leq st$. We have $\widetilde{\psi}_{st}(e) = \widehat{\psi}_{st}(e) = \widehat{\psi}_{st^+}\widetilde{\psi}_t(e)$, that is, $v_1^+ = (wu_1)^+$. Because $\psi(wu_1) \leq st = \psi(v_1)$, we have $\psi(v_1) \sim \psi(wu_1)$. By Lemma \ref{lem:proper_compatibility} this yields $v_1\sim wu_1$ whence $v_1=wu_1$. But then $st=\psi(v_1) = \psi(wu_1)= pt (st)^*=pt$.  It follows that  $(st^+)^+ = (pt^+)^+$ whence $st^+=pt^+$ thus $p \leq st^+ = pt^+ \leq p$. Hence $p=st^+$ and $\widetilde{\psi}_t(e)\in \operatorname{\mathrm{dom}}\widetilde{\psi}_{st^+}$, as needed. 
			
			(3) $\Rightarrow$ (1) We assume that $\widetilde{\psi}$ satisfies (LSr) and show that so does $\widehat{\psi}$. Let $s,t\in S$. Just as before, it is thus enough to prove that 
			$$\operatorname{\mathrm{dom}}\widehat{\psi}_{st}\widehat{\psi}_t^{\,*} \subseteq \operatorname{\mathrm{dom}}\widehat{\psi}_{st^+}\widehat{\psi}_t.$$
			Let $e\in \operatorname{\mathrm{dom}}\widehat{\psi}_{st}\widehat{\psi}_t^{\,*}$. 
			Then $e\in \operatorname{\mathrm{dom}}\widehat{\psi}_t \cap \operatorname{\mathrm{dom}}\widehat{\psi}_{st}$, so that $e\leq u^*$ where $\psi(u) = t_1\leq  t$ and $e\leq v^*$ where $\psi(v) \leq st$.  Then $\psi(v)=stf = ss^*tf$ for some $f\in P(S)$. We put $s^*tf = t_2$ and note that $s^*t_2 = t_2$. Because $t_1\sim t_2$, we have
			$t_1t_2^* = t_2t_1^*$, denote this element by $t_3$. Let $u_1=uv^*$ and $v_1=vu^*$. Then $e\leq u_1^*$ where $\psi(u_1)=\psi(uv^*) = t_1(st_2)^* = t_1(st_2)^*t_2^*= t_3(st_2)^* =  t_3(s^*t_2)^* = t_3t_2^* = t_3$ and $e\leq v_1^*$ with $\psi(v_1) = st_2t_1^* = st_3$. Therefore,
			$e\in  \operatorname{\mathrm{dom}}\widetilde{\psi}_{t_3} \cap \operatorname{\mathrm{dom}}\widetilde{\psi}_{st_3}$ so that $e\in \operatorname{\mathrm{dom}}\widetilde{\psi}_{st_3}\widetilde{\psi}_{t_3}^*$. From  $\widetilde{\psi}_{st_3} \widetilde{\psi}_{t_3}^* = \widetilde{\psi}_{s t_3^+}\widetilde{\psi}_{t_3}$ it now follows that
			$$
			\widehat{\psi}_{t}(e) = \widetilde{\psi}_{t_3}(e) \in  \operatorname{\mathrm{dom}}\widetilde{\psi}_{st_3^+} \subseteq \operatorname{\mathrm{dom}}\widehat{\psi}_{st_3^+}.
			$$ 
			Since $\widehat{\psi}$ is order-preserving and $st_3^+ \leq st^+$, we obtain $\widehat{\psi}_{t}(e) \in  \operatorname{\mathrm{dom}}\widehat{\psi}_{st^+}$, as needed.
		\end{proof}
		
		We say that $\psi$ is {\em extra proper} provided that $\widetilde{\psi}$ is locally strong (or, equivalently, $\widehat{\psi}$ is strong). Extra proper morphisms form a class narrower than that of proper morphisms which still generalizes the class  of idempotent pure morphisms between inverse semigroups (since $\widetilde{\psi}$ is always inverse when $S$ and $T$ are inverse semigroups). In the case where $S$ is a monoid and $\psi$ is extra proper,  $T$ is an extra proper restriction  semigroup \cite{CG, Kud19}. It follows that extra proper morphisms generalize idempotent pure morphisms between inverse semigroups, on the one hand, and extra proper restriction semigroups, on the other hand.

		\subsection{Perfect extensions}\label{subs:perfect} We now establish a connection between local multiplicativity of $\widehat{\psi}$ and $\widetilde{\psi}$.
		\begin{theorem} \label{th:perfect} Let $\psi\colon T\to S$ be a proper moprhism. 
			The following statements are equivalent:
			\begin{enumerate}[(1)]
				\item $\widehat{\psi}$ is multiplicative;
				\item $\widehat{\psi}$ is locally multiplicative;
				\item $\widetilde{\psi}$ is locally multiplicative;
				\item  the equality $\psi^{-1}((st)^{\downarrow}) = \psi^{-1}(s^{\downarrow})\psi^{-1}(t^{\downarrow})$ holds for all $s,t\in S$.
			\end{enumerate}
		\end{theorem}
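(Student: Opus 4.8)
The plan is to prove $(1)\Leftrightarrow(2)$ first, then $(1)\Leftrightarrow(4)$ directly, and to close the loop through $(4)\Rightarrow(3)\Rightarrow(1)$. Since $\widehat{\psi}$ is always order-preserving (Proposition~\ref{prop:psi_hat_tilde}(1) and Lemma~\ref{lem:prop_hat_tilde1}(1)), the equivalence $(1)\Leftrightarrow(2)$ is immediate from Proposition~\ref{prop:local2}, which for a premorphism satisfying (OP) gives that (M) is equivalent to (LM). All the remaining work consists of domain comparisons, for which I would repeatedly use that $\operatorname{\mathrm{dom}}\widehat{\psi}_{st}=\bigcup\{(w^*)^{\downarrow}\colon w\in T,\ \psi(w)\le st\}$, that $\operatorname{\mathrm{dom}}\widetilde{\psi}_{st}$ is the same union taken over $w$ with $\psi(w)=st$, and that $\operatorname{\mathrm{dom}}(\widehat{\psi}_s\widehat{\psi}_t)$ and $\operatorname{\mathrm{dom}}(\widetilde{\psi}_s\widetilde{\psi}_t)$ are order ideals (by (A1), (A2)), so that inclusions between such domains need only be tested on the elements $w^*$.

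For $(1)\Leftrightarrow(4)$: the inclusion $\psi^{-1}(s^{\downarrow})\psi^{-1}(t^{\downarrow})\subseteq\psi^{-1}((st)^{\downarrow})$ always holds by Lemma~\ref{lem:lem1}\eqref{i:b5}, so $(4)$ is really the opposite inclusion. For $(4)\Rightarrow(1)$ I would show $\operatorname{\mathrm{dom}}\widehat{\psi}_{st}\subseteq\operatorname{\mathrm{dom}}(\widehat{\psi}_s\widehat{\psi}_t)$ (the reverse inclusion and the equality of the maps themselves coming from (PM1)): if $e\le w^*$ with $\psi(w)\le st$, factor $w=uv$ with $\psi(u)\le s$, $\psi(v)\le t$; then $e\le w^*=(uv)^*=(u^*v)^*\le v^*$, so $e\in\operatorname{\mathrm{dom}}\widehat{\psi}_t$, and Lemma~\ref{lem:aux14}(1) applied to $e\le(uv)^*$ gives $\widehat{\psi}_t(e)=(ve)^+\le u^*$, so $\widehat{\psi}_t(e)\in\operatorname{\mathrm{dom}}\widehat{\psi}_s$. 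For $(1)\Rightarrow(4)$, given $\psi(w)\le st$ I would use $w^*\in\operatorname{\mathrm{dom}}\widehat{\psi}_{st}=\operatorname{\mathrm{dom}}(\widehat{\psi}_s\widehat{\psi}_t)$ to produce $r\in T$ with $w^*\le r^*$, $\psi(r)\le t$ and $q\in T$ with $(rw^*)^+\le q^*$, $\psi(q)\le s$, and then set $v=rw^*$, $u=qv^+$; one checks $v^*=w^*$, $u^*=v^+$, $\psi(u)\le s$, $\psi(v)\le t$, hence $(uv)^*=w^*$ and $\psi(uv)\le st$. Since also $\psi(w)\le st$, Lemma~\ref{lem:lem1}\eqref{i:b62} gives $\psi(uv)\sim\psi(w)$, and $\psi(uv)^*=\psi(w)^*$ then forces $\psi(uv)=\psi(w)$ by Lemma~\ref{lem:lem1}\eqref{i:b8}; properness (Lemma~\ref{lem:proper_compatibility}) gives $uv\sim w$, and a second application of Lemma~\ref{lem:lem1}\eqref{i:b8} gives $uv=w$, i.e.\ $w\in\psi^{-1}(s^{\downarrow})\psi^{-1}(t^{\downarrow})$.

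For $(4)\Rightarrow(3)$ I would verify (LM$'$) for $\widetilde{\psi}$: given $s^*=t^+$ and $w$ with $\psi(w)=st$, factor $w=ab$ with $\psi(a)\le s$, $\psi(b)\le t$ by $(4)$; since $s^*=t^+$ forces $(st)^+=s^+$ and $(st)^*=t^*$ (via \eqref{eq:consequences} and the basic axioms), the identity $\psi(a)\psi(b)=st$ together with $(\psi(a)\psi(b))^+\le\psi(a)^+\le s^+$ and $(\psi(a)\psi(b))^*\le\psi(b)^*\le t^*$ forces $\psi(a)^+=s^+$ and $\psi(b)^*=t^*$, hence $\psi(a)=s$ and $\psi(b)=t$ by Lemma~\ref{lem:lem1}\eqref{i:b61},\eqref{i:b8}; then $w^*\le b^*$ puts $w^*\in\operatorname{\mathrm{dom}}\widetilde{\psi}_t$, and $\widetilde{\psi}_t(w^*)=(bw^*)^+\le a^*$ by Lemma~\ref{lem:aux14}(1), so $w^*\in\operatorname{\mathrm{dom}}(\widetilde{\psi}_s\widetilde{\psi}_t)$, whence $\widetilde{\psi}_s\widetilde{\psi}_t=\widetilde{\psi}_{st}$ with (PM1). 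For $(3)\Rightarrow(1)$, given arbitrary $a,b\in S$ and $e\le w^*$ with $\psi(w)\le ab$, I would replace $(a,b)$ by the \emph{composable} pair $a_1=a(bg)^+$, $b_1=a^*(bg)$ with $g=\psi(w)^*$, which satisfies $a_1b_1=\psi(w)$, $a_1^*=b_1^+$, $a_1\le a$, $b_1\le b$ (a routine calculation with \eqref{eq:mov_proj} and \eqref{eq:rule1}); applying $(3)$ gives $\widetilde{\psi}_{a_1}\widetilde{\psi}_{b_1}=\widetilde{\psi}_{\psi(w)}$, and since $w^*\in\operatorname{\mathrm{dom}}\widetilde{\psi}_{\psi(w)}$, transporting the resulting membership to $\widehat{\psi}$ via Lemma~\ref{lem:well_def}\eqref{ij2:3}, the order-preservation of $\widehat{\psi}$, and Lemma~\ref{lem:aux15} (compatible partial bijections agree on common domains) yields $w^*\in\operatorname{\mathrm{dom}}(\widehat{\psi}_a\widehat{\psi}_b)$, hence $e\in\operatorname{\mathrm{dom}}(\widehat{\psi}_a\widehat{\psi}_b)$; with (PM1) this gives $\widehat{\psi}_a\widehat{\psi}_b=\widehat{\psi}_{ab}$, i.e.\ (1).

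I expect the main obstacle to be the recurring passage from an inequality to an equality: in $(1)\Rightarrow(4)$, turning ``$\psi(uv)\le st$ and $\psi(uv)\sim\psi(w)$'' into ``$uv=w$'', which forces one to normalize the chosen witnesses so that the $^*$-components coincide exactly; and in $(4)\Rightarrow(3)$, deducing the exact values $\psi(a)=s$, $\psi(b)=t$ from a factorization that a priori only respects the order. Both hinge on the properness of $\psi$ through Lemma~\ref{lem:proper_compatibility} and on the order/compatibility dictionary of Lemma~\ref{lem:lem1}, in the same spirit as the proofs of Proposition~\ref{prop:psi_coincide} and Theorem~\ref{th:extra_locally_strong}; everything else reduces to routine identities \eqref{eq:mov_proj}, \eqref{eq:rule1}, \eqref{eq:consequences} and to the order-ideal structure of the relevant domains.
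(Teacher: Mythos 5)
Your proposal is correct and follows essentially the same route as the paper's proof: $(1)\Leftrightarrow(2)$ via Proposition~\ref{prop:local2}, then the cycle $(1)\Rightarrow(4)\Rightarrow(3)\Rightarrow(1)$, with the same key devices (properness plus Lemma~\ref{lem:lem1}\eqref{i:b8} to upgrade $\psi(uv)\le st$ to an exact factorization, and the composable replacement $a_1=a(b\psi(w)^*)^+$, $b_1=a^*(b\psi(w)^*)$ in $(3)\Rightarrow(1)$, which is literally the paper's choice of $r$ and $q$). The only deviations are cosmetic: you add a short direct $(4)\Rightarrow(1)$ via Lemma~\ref{lem:aux14}(1), verify (LM$'$) rather than (LM) in $(4)\Rightarrow(3)$, and in $(1)\Rightarrow(4)$ normalize the witnesses to get $uv=w$ exactly where the paper shows $rq\ge p$ and multiplies by $p^*$.
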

		
		\begin{proof} 
			The equivalence (1) $\Leftrightarrow$ (2) follows from Proposition \ref{prop:local2}.
			
			(1) $\Rightarrow$ (4) Since the inclusion $\psi^{-1}(s^{\downarrow})\psi^{-1}(t^{\downarrow}) \subseteq \psi^{-1}((st)^{\downarrow})$ clearly always holds, we prove the reverse inclusion, under the assumption that  $\widehat{\psi}$ is  multiplicative. Let $p\in \psi^{-1}((st)^{\downarrow})$. Then  $\psi(p)\leq st$.
			Observe that $p^*\in \operatorname{\mathrm{dom}}\widehat{\psi}_{st}$.
			Then by assumption $p^*\in \operatorname{\mathrm{dom}}\widehat{\psi}_{s}\widehat{\psi}_{t}$, that is, $p^*\in \operatorname{\mathrm{dom}}\widehat{\psi}_{t}$ and $\widehat{\psi}_{t}(p^*) \in \operatorname{\mathrm{dom}}\widehat{\psi}_{s}$. This means that there is $q\in \psi^{-1}(t^{\downarrow})$ such that $p^*\leq q^*$. Furthermore, there is $r\in \psi^{-1}(s^{\downarrow})$ satisfying $\widehat{\psi}_{t}(p^*) = (qp^*)^+\leq r^{*}$. Observe that $\psi(rq),\psi(p)\leq st$ whence, in view of Lemma \ref{lem:proper_compatibility}, $rq \sim p$. 
			Because, in addition,
			\begin{align*}
			(rq)^* = & (r^*q)^* \geq ((qp^*)^+q)^* & \\
			& = (qp^*)^* & (\text{since } (qp^*)^+q = qp^* \text{ by } \eqref{eq:mov_proj})\\
			& = q^*p^* = p^*,
			\end{align*}
			we obtain $rq \geq p$. Then $p=rqp^*$. Since $r\in \psi^{-1}(s^{\downarrow})$ and $qp^*\in \psi^{-1}(t^{\downarrow})$ it follows that $p\in \psi^{-1}(s^{\downarrow})\psi^{-1}(t^{\downarrow})$, as desired.
			
			(4) $\Rightarrow$ (3) We assume that (4) holds and let $s,t\in S$. Since $\widetilde{\psi}_{st^+}\widetilde{\psi}_{s^*t} \leq \widetilde{\psi}_{st}$, we prove only the inclusion
			$\operatorname{\mathrm{dom}}\widetilde{\psi}_{st}\subseteq \operatorname{\mathrm{dom}}\widetilde{\psi}_{st^+}\widetilde{\psi}_{s^*t}$. Let $e\in \operatorname{\mathrm{dom}}\widetilde{\psi}_{st}$. This means that $e\leq p^*$ for some $p\in \psi^{-1}(st) = \psi^{-1}((st^+)(s^*t))$. The assumption yields that $p\in \psi^{-1}((st^+)^{\downarrow})\psi^{-1}((s^*t)^{\downarrow})$, that is, $p= rq$ where $\psi(r)\leq st^+$ and $\psi(q) \leq s^*t$. Let $r_1=rq^+$ and $q_1=r^*q$. Then
			$p=r_1q_1$ and the equalities $p^*=q_1^*$, $p^+= r_1^+$, $q_1^+ = r_1^*$ hold.  In addition, $$st = \psi(r_1q_1) = \psi(r_1)\psi(q_1) \leq \psi(r)\psi(q) \leq st^+s^*t = st,$$
			so that $\psi(r_1)\psi(q_1) = st$. But then 
			$$(s^*t)^*=(st)^* = (\psi(r_1)\psi(q_1))^* = (\psi(r_1)^*\psi(q_1))^* = (\psi(q_1)^+\psi(q_1))^* = \psi(q_1)^*,$$
			implying that $\psi(q_1)=s^*t$. Similarly, $\psi(r_1) = st^+$. Because $e\leq p^* = (q_1)^*$ and $\psi(q_1) = s^*t$, it follows that that $e \in \operatorname{\mathrm{dom}}\widetilde{\psi}_{s^*t}$ and, furthermore,
			$\widetilde{\psi}_{s^*t}(e) =  (q_1e)^+ \leq q_1^+ = r_1^*$. Hence $\widetilde{\psi}_{s^*t}(e) \in \operatorname{\mathrm{dom}}\widetilde{\psi}_{st^+}$.
			
			(3) $\Rightarrow$ (1) We assume (3) holds
			and let $s,t\in S$. Since $\widehat{\psi}_s\widehat{\psi}_t \leq \widehat{\psi}_{st}$, we prove only the inclusion
			$\operatorname{\mathrm{dom}}\widehat{\psi}_{st}\subseteq \operatorname{\mathrm{dom}}\widehat{\psi}_s\widehat{\psi}_t$.  Consider any element $e\in \operatorname{\mathrm{dom}}\widehat{\psi}_{st}$ and note that $e \leq p^*$ where  $u=\psi(p)\leq st$. Then $e\in \operatorname{\mathrm{dom}}\widetilde{\psi}_u$. Let $r=s(tu^*)^+$ and $q=s^*(tu^*)$. We have $u=rq$ where $r\leq s$, $q \leq t$ and $r^*=q^+$. It follows from (3) that $e\in \operatorname{\mathrm{dom}}\widetilde\psi_r\widetilde\psi_{q}$. Since $\widetilde\psi_{r}\leq \widehat\psi_{r}$, $\widetilde\psi_{q}\leq \widehat\psi_{q}$ and $\widehat\psi$ is order-preserving, we have $e\in \operatorname{\mathrm{dom}}\widehat\psi_{r}\widehat\psi_{q}\subseteq\operatorname{\mathrm{dom}}\widehat\psi_{s}\widehat\psi_{t}$, as needed.
		\end{proof}
		
		We say that $\psi$ is {\em perfect} provided that $\widehat{\psi}$ is multiplicative (and thus all the equivalent conditions of Theorem \ref{th:perfect} hold). 
		
		If $S$ is a monoid then $\psi\colon T\to S$ is perfect if and only if $T$ is an almost perfect restriction semigroup \cite{Jones16} (termed an ultra proper restriction semigroup in \cite{Kud15}). It follows that perfect extensions generalize almost perfect restriction semigroups.
		
		\section{Categories of proper extensions of $S$ and of partial actions of $S$}\label{s:categorical}
		
		Throughout this section, we fix $S$ to be a restriction semigroup. 
		
		\subsection{The categories of premorphisms from $S$}\label{subs:domains}
		We define the category ${\mathcal{A}}(S)$ as follows. Its objects are triples $(\alpha, p, X)$ where $X$ is a semilattice,  $p\colon X\to P(S)$ a morphism of semilattices and $\alpha\colon S\to {\mathcal{I}}(X)$ a premorphism, such that conditions (A1)--(A4) are satisfied. A {\em morphism} from  $(\alpha, p_{\alpha}, X)$ to $(\beta, p_{\beta}, Y)$ is a semilattice morphism
		$f\colon X\to Y$ such that the following conditions hold:
		\begin{enumerate}
			\item[(M1)] $p_{\alpha}=p_{\beta}f$;
			\item[(M2)] $f(\operatorname{\mathrm{dom}}\alpha_s) \subseteq \operatorname{\mathrm{dom}}\beta_s$ and $\beta_s(f(e)) = f(\alpha_s(e))$ for all $s\in S$ and $e\in \operatorname{\mathrm{dom}}\alpha_s$.
		\end{enumerate} 
		
		Condition (M2) points out that our notion of a morphism between partial actions agrees with  that in the sense of Abadie~\cite{Abadie03}. It is easy to verify that ${\mathcal{A}}(S)$  is indeed a category.
		
		In Subsection \ref{subs:equiv} we will need the following lemma.
		
		\begin{lemma}\label{lem:tech} A morphism $f\colon X\to Y$ in the category ${\mathcal{A}}(S)$ satisfies condition:
			\begin{enumerate}
				\item[{\em (M2r)}] $f(\operatorname{\mathrm{ran}}\alpha_s)\subseteq \operatorname{\mathrm{ran}}\beta_s$ and $\beta_s^{-1}(f(e)) = f(\alpha_s^{-1}(e))$ for all $e\in \operatorname{\mathrm{ran}}\alpha_s$.
			\end{enumerate}
		\end{lemma}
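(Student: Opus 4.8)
The plan is to read off (M2r) from (M2) by passing to preimages, using only that every $\alpha_s$ and every $\beta_s$ is a partial bijection. First I would fix $s\in S$ and an element $e\in\operatorname{\mathrm{ran}}\alpha_s$, and set $x=\alpha_s^{-1}(e)\in\operatorname{\mathrm{dom}}\alpha_s$, so that $\alpha_s(x)=e$. Applying (M2) to $x$ gives $f(x)\in\operatorname{\mathrm{dom}}\beta_s$ together with $\beta_s(f(x))=f(\alpha_s(x))=f(e)$. In particular $f(e)=\beta_s(f(x))\in\operatorname{\mathrm{ran}}\beta_s$, which is precisely the inclusion $f(\operatorname{\mathrm{ran}}\alpha_s)\subseteq\operatorname{\mathrm{ran}}\beta_s$; since $s$ and $e$ were arbitrary, this settles the first half of (M2r).

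For the identity $\beta_s^{-1}(f(e))=f(\alpha_s^{-1}(e))$, note that the left-hand side makes sense exactly because of the inclusion just proved, namely $f(e)\in\operatorname{\mathrm{ran}}\beta_s$. Now $f(x)\in\operatorname{\mathrm{dom}}\beta_s$ and $\beta_s$ is injective on its domain, so applying $\beta_s^{-1}$ to the equality $\beta_s(f(x))=f(e)$ yields $\beta_s^{-1}(f(e))=f(x)=f(\alpha_s^{-1}(e))$, as required. I do not expect any genuine obstacle here: (M2r) is simply the range-side mirror of (M2) and comes out of a one-line computation; the only thing to be careful about is the order of the two claims, since the well-definedness of $\beta_s^{-1}(f(e))$ in the second part relies on the inclusion established in the first part. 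Condition (M1) is not even needed for this lemma.
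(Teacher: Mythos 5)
Your proof is correct and is essentially the paper's own argument: set $x=\alpha_s^{-1}(e)$, apply (M2) to get $\beta_s(f(x))=f(\alpha_s(x))=f(e)$, and read off both the range inclusion and the identity $\beta_s^{-1}(f(e))=f(\alpha_s^{-1}(e))$. Nothing further is needed.
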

		
		\begin{proof} Let $s\in S$ and $e\in \operatorname{\mathrm{ran}}\alpha_s$. Then  $\alpha_s^{-1}(e) \in \operatorname{\mathrm{dom}}\alpha_s$. From (M2) we have $f(\alpha_s^{-1}(e)) \in \operatorname{\mathrm{dom}}\beta_s$ and $\beta_s(f(\alpha_s^{-1}(e))) = f(\alpha_s\alpha_s^{-1}(e)) = f(e)$. Hence $f(e)\in \operatorname{\mathrm{ran}}\beta_s$ and $f(\alpha_s^{-1}(e)) = \beta_s^{-1}(f(e))$, as  required.
		\end{proof}
		
		It is easy to see that (M2) is in fact equivalent to (M2r).

		We now define two subcategories of the category ${\mathcal{A}}(S)$. Their objects arise from premorphisms underlying proper extensions of $S$ (see Proposition \ref{prop:psi_hat_tilde}). Namely, we put ${\widehat{\mathcal{A}}}(S)$ to be the full subcategory of  ${\mathcal{A}}(S)$ whose objects satisfy condition (A3a), and ${\widetilde{\mathcal{A}}}(S)$ to be the full subcategory of  ${\mathcal{A}}(S)$ whose objects satisfy condition (A3b).

		Let  ${\widehat{I}}\colon {\widehat{\mathcal{A}}}(S) \to {\mathcal{A}}(S)$ and ${\widetilde{I}}\colon {\widetilde{\mathcal{A}}}(S) \to {\mathcal{A}}(S)$ be the inclusion functors.  We now construct functors in the reverse directions.
		Let $(\varphi, p,X)$ be an object of ${\mathcal{A}}(S)$.   
		
		For each $s\in S$  
		we define ${\widehat{\varphi}}_s \in {\mathcal{I}}(X)$ by $$\operatorname{\mathrm{dom}}{\widehat{\varphi}}_s = \{x\colon x\in \operatorname{\mathrm{dom}}\varphi_t \text{ for some } t\leq s\}$$
		and ${\widehat{\varphi}}_s(x) = \varphi_t(x)$, where $t\leq s$ is such that $x\in \operatorname{\mathrm{dom}}\varphi_t$. By  Lemma  \ref{lem:prem_compatibility} this is well defined. It is easy to check that this defines an object $({\widehat{\varphi}},p,X)$ of ${\widehat{\mathcal{A}}}(S)$. We put ${\widehat{F}}(\varphi, p,X) = ({\widehat{\varphi}},p,X)$. This  assignment gives rise to a functor ${\widehat{F}}\colon {\mathcal{A}}(S)\to {\widehat{\mathcal{A}}}(S)$ which we call the {\em extension of domains} functor.
		
		For each $s\in S$  
		we now define ${\widetilde{\varphi}}_s \in {\mathcal{I}}(X)$ by restricting $\varphi_s$ to the set
		$$\operatorname{\mathrm{dom}}{\widetilde{\varphi}}_s=\{x\in \operatorname{\mathrm{dom}}\varphi_s\colon p(x) = s^*\}^{\downarrow}.$$
		This defines an object $({\widetilde{\varphi}},p,X)$ of ${\widetilde{\mathcal{A}}}(S)$. We put ${\widetilde{F}}(\varphi, p,X) = ({\widetilde{\varphi}},p,X)$. This assignment gives rise to a functor ${\widetilde{F}}\colon {\mathcal{A}}(S)\to {\widetilde{\mathcal{A}}}(S)$ which we call the  {\em restriction of domains} functor.
		
		\begin{theorem}\label{th:adjunctions} Let $S$ be a restriction semigroup. 
			\begin{enumerate}[(1)]
				\item The functor ${\widehat{F}}\colon {\mathcal{A}}(S) \to {\widehat{\mathcal{A}}}(S)$ is a left adjoint to the functor ${\widehat{I}}\colon {\widehat{\mathcal{A}}}(S) \to {\mathcal{A}}(S)$.
				\item The functor ${\widetilde{F}}\colon {\mathcal{A}}(S) \to {\widetilde{\mathcal{A}}}(S)$ is a right adjoint to the functor ${\widetilde{I}}\colon {\widetilde{\mathcal{A}}}(S) \to {\mathcal{A}}(S)$.
				\item The functors  ${\widetilde{F}}{\widehat{I}} \colon {\widehat{\mathcal{A}}}(S)\to {\widetilde{\mathcal{A}}}(S)$ and ${\widehat{F}}{\widetilde{I}}\colon {\widetilde{\mathcal{A}}}(S)\to {\widehat{\mathcal{A}}}(S)$ establish an isomorphism between the categories ${\widehat{\mathcal{A}}}(S)$ and ${\widetilde{\mathcal{A}}}(S)$.
			\end{enumerate}
		\end{theorem}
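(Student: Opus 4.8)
The plan is to prove all three parts through one device: the unit of the adjunction in (1) and the counit of the adjunction in (2) will both be the identity map on the underlying semilattice, so that each adjunction reduces to a short verification of conditions (M1)--(M2), after which part (3) follows at the level of objects. I will use freely that $\widehat{F}(\varphi,p,X)$ and $\widetilde{F}(\varphi,p,X)$ are genuine objects of $\widehat{\mathcal{A}}(S)$ and $\widetilde{\mathcal{A}}(S)$ (already established in the construction of these functors), that $\widehat{I},\widetilde{I}$ are inclusions of full subcategories, and that a morphism in any of these categories is determined by its underlying map.

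For (1), fix an object $(\alpha,p,X)$ of $\mathcal{A}(S)$. Taking $t=s$ in the definition of $\widehat{\alpha}$ shows that $\operatorname{\mathrm{dom}}\alpha_s\subseteq\operatorname{\mathrm{dom}}\widehat{\alpha}_s$ with $\widehat{\alpha}_s$ restricting $\alpha_s$, so $\mathrm{id}_X\colon(\alpha,p,X)\to(\widehat{\alpha},p,X)=\widehat{I}\widehat{F}(\alpha,p,X)$ is a morphism of $\mathcal{A}(S)$. I claim it is a universal arrow from $(\alpha,p,X)$ to $\widehat{I}$, which, given the remarks above, amounts to: for every $(\beta,q,Y)\in\widehat{\mathcal{A}}(S)$ and every morphism $g\colon(\alpha,p,X)\to(\beta,q,Y)$ of $\mathcal{A}(S)$, the same map $g$ is already a morphism $(\widehat{\alpha},p,X)\to(\beta,q,Y)$. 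Only (M2) needs checking: given $x\in\operatorname{\mathrm{dom}}\widehat{\alpha}_s$, pick $t\le s$ with $x\in\operatorname{\mathrm{dom}}\alpha_t$ and $\widehat{\alpha}_s(x)=\alpha_t(x)$; (M2) for $g$ at $t$ gives $g(x)\in\operatorname{\mathrm{dom}}\beta_t$ and $\beta_t(g(x))=g(\alpha_t(x))$; and since $(\beta,q,Y)$ satisfies (A3a), the observations preceding Lemma~\ref{lem:prop_hat_tilde1} give $\operatorname{\mathrm{dom}}\beta_t\subseteq\operatorname{\mathrm{dom}}\beta_s$ and $\beta_t\le\beta_s$, whence $g(x)\in\operatorname{\mathrm{dom}}\beta_s$ and $\beta_s(g(x))=\beta_t(g(x))=g(\widehat{\alpha}_s(x))$. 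This yields $\widehat{F}\dashv\widehat{I}$. Part (2) is dual: the counit component $\mathrm{id}_X\colon(\widetilde{\alpha},p,X)=\widetilde{I}\widetilde{F}(\alpha,p,X)\to(\alpha,p,X)$ is a morphism of $\mathcal{A}(S)$ because $\widetilde{\alpha}_s$ restricts $\alpha_s$, and for couniversality one takes $(\beta,q,Y)\in\widetilde{\mathcal{A}}(S)$ and a morphism $g\colon(\beta,q,Y)\to(\alpha,p,X)$ and shows $g$ is a morphism $(\beta,q,Y)\to(\widetilde{\alpha},p,X)$: given $y\in\operatorname{\mathrm{dom}}\beta_s$, condition (A3b) for $(\beta,q,Y)$ provides $y_0\in\operatorname{\mathrm{dom}}\beta_s$ with $q(y_0)=s^*$ and $y\le y_0$, and then (M1)--(M2) for $g$ give $g(y_0)\in\operatorname{\mathrm{dom}}\alpha_s$ with $p(g(y_0))=q(y_0)=s^*$, so $g(y)\le g(y_0)$ lies in $\{x\in\operatorname{\mathrm{dom}}\alpha_s\colon p(x)=s^*\}^{\downarrow}=\operatorname{\mathrm{dom}}\widetilde{\alpha}_s$, with the action equality immediate since $\widetilde{\alpha}_s$ restricts $\alpha_s$. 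Hence $\widetilde{I}\dashv\widetilde{F}$.

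For (3), since $\widehat{I},\widetilde{I}$ are inclusions and $\widehat{F},\widetilde{F}$ act as the identity on underlying maps (so they preserve morphisms, by the same computations as above), it suffices to check on objects that $\widehat{F}\widetilde{F}(\alpha,p,X)=(\alpha,p,X)$ for $(\alpha,p,X)\in\widehat{\mathcal{A}}(S)$ and $\widetilde{F}\widehat{F}(\beta,q,Y)=(\beta,q,Y)$ for $(\beta,q,Y)\in\widetilde{\mathcal{A}}(S)$, i.e.\ that $\widehat{(\widetilde{\alpha})}=\alpha$ and $\widetilde{(\widehat{\beta})}=\beta$. The first is straightforward: $\operatorname{\mathrm{dom}}\widehat{(\widetilde{\alpha})}_s=\bigcup_{t\le s}\operatorname{\mathrm{dom}}\widetilde{\alpha}_t$, and (A3a) gives both inclusions against $\operatorname{\mathrm{dom}}\alpha_s$ --- one from $\operatorname{\mathrm{dom}}\alpha_t\subseteq\operatorname{\mathrm{dom}}\alpha_s$ for $t\le s$, the other from the defining formula of (A3a) --- while the actions agree because (A3a) makes $\alpha$ order-preserving, so $\alpha_t\le\alpha_s$. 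The identity $\widetilde{(\widehat{\beta})}=\beta$ is the delicate step and the one I expect to require the real argument: here $\operatorname{\mathrm{dom}}\widetilde{(\widehat{\beta})}_s=\{y\in\operatorname{\mathrm{dom}}\widehat{\beta}_s\colon q(y)=s^*\}^{\downarrow}$, and to identify this with $\operatorname{\mathrm{dom}}\beta_s$ the inclusion ``$\supseteq$'' runs as in part (2) via (A3b), whereas for ``$\subseteq$'' I would note that if $y\in\operatorname{\mathrm{dom}}\beta_t$ with $t\le s$ and $q(y)=s^*$, then $q(y)\le t^*$ (by (A5) and (A3)) and $t^*\le s^*$ (Lemma~\ref{lem:lem1}\eqref{i:b6}), so $t^*=s^*$; since $t\le s$ gives $t\sim s$ (Lemma~\ref{lem:lem1}\eqref{i:b61}), Lemma~\ref{lem:lem1}\eqref{i:b8} forces $t=s$, and then $y\in\operatorname{\mathrm{dom}}\beta_s$ because $\operatorname{\mathrm{dom}}\beta_s$ is an order ideal by (A1). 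That $\widetilde{(\widehat{\beta})}_s$ and $\beta_s$ agree as maps is then immediate, using the well-definedness of $\widehat{\beta}$ (Lemma~\ref{lem:prem_compatibility}). Assembling these identities, $\widehat{F}\widetilde{I}$ and $\widetilde{F}\widehat{I}$ are mutually inverse functors between $\widehat{\mathcal{A}}(S)$ and $\widetilde{\mathcal{A}}(S)$, giving the asserted isomorphism; the collapse $t=s$ just used is precisely what makes restriction and extension of domains invertible on these two subcategories but not on all of $\mathcal{A}(S)$, and is where I expect the main effort to go.
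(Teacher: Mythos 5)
Your proof is correct and follows essentially the same route as the paper: the unit of the first adjunction and the counit of the second are both $\mathrm{id}_X$, the universal properties reduce to checking that a morphism into (resp.\ out of) an object of $\widehat{\mathcal{A}}(S)$ (resp.\ $\widetilde{\mathcal{A}}(S)$) automatically respects the extended (resp.\ restricted) domains, and part (3) comes down to the object-level identities $\widehat{F}\widetilde{I}\widetilde{F}\widehat{I}=\mathrm{id}$ and $\widetilde{F}\widehat{I}\widehat{F}\widetilde{I}=\mathrm{id}$. The only difference is that you supply the verifications (notably the $t^*=s^*\Rightarrow t=s$ collapse via Lemma~\ref{lem:lem1}) that the paper leaves as ``observations''.
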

		
		\begin{proof} (1)  Let $(\varphi, p_{\varphi}, X)$ be an object of ${\mathcal{A}}(S)$. Then $\eta_{(\varphi, p_{\varphi}, X)}={\mathrm{id}}_X$ is a morphism from  $(\varphi, p_{\varphi}, X)$ to $(\widehat{\varphi}, p_{\varphi}, X)=\widehat{I}\widehat{F}(\varphi, p_{\varphi}, X)$  in ${\mathcal{A}}(S)$ which is natural in $(\varphi, p_{\varphi}, X)$. Moreover, for any
			object $(\psi, p_{\psi}, Y)$ in  ${\widehat{\mathcal{A}}}(S)$ and any morphism $f\colon X \to Y$ from $(\varphi, p_{\varphi}, X)$ to $(\psi, p_{\psi}, Y)=\widehat{I}(\psi, p_{\psi}, Y)$ in ${\mathcal{A}}(S)$ the map $f$ is the unique morphism from $(\widehat{\varphi}, p_{\varphi}, X)=\widehat{F}(\varphi, p_{\varphi}, X)$ to $(\psi, p_{\psi}, Y)$ such that the diagram below commutes:
			\begin{center}
				\begin{tikzpicture}
				\node (psi) at (0,0) {$(\psi, p_{\psi}, Y)$};
				\node[left = 0.8 cm of psi] (varphi) {$(\varphi, p_{\varphi}, X)$};
				\node[below = 1 cm of varphi] (varphi_tilde) {$(\widehat{\varphi}, p_{\varphi}, X)$};
				\draw[<-] (psi)--(varphi) node [midway,above] {$f$};
				\draw[<-,dashed] (psi)--(varphi_tilde) node [midway,below] {$f$};
				\draw[<-] (varphi_tilde)--(varphi) node [midway,left] {$\eta_{(\varphi, p_{\varphi}, X)}$};
				\end{tikzpicture}
			\end{center} 
			
			(2) Let $(\varphi, p_{\varphi}, X)$ be an object of ${\mathcal{A}}(S)$. Then $\varepsilon_{(\varphi, p_{\varphi}, X)}={\mathrm{id}}_X$ is a morphism from $(\widetilde{\varphi}, p_{\varphi}, X)=\widetilde{I}\widetilde{F}(\varphi, p_{\varphi}, X)$ to $(\varphi, p_{\varphi}, X)$  in ${\mathcal{A}}(S)$ which is natural in $(\varphi, p_{\varphi}, X)$. Moreover, for any
			$(\psi, p_{\psi}, Y)$ in ${\widetilde{\mathcal{A}}}(S)$ and any morphism $f\colon Y \to X$ from $(\psi, p_{\psi}, Y)=\widetilde{I}(\psi, p_{\psi}, Y)$  to $(\varphi, p_{\varphi}, X)$ in ${\mathcal{A}}(S)$ the map $f$ is the unique morphism from $(\psi, p_{\psi}, Y)$ to $(\widetilde{\varphi}, p_{\varphi}, X)=\widetilde{F}(\varphi, p_{\varphi}, X)$ such that the diagram below commutes:
			
			\begin{center}
				\begin{tikzpicture}
				\node (psi) at (-1,0) {$(\psi, p_{\psi}, Y)$};
				\node[right= 0.8 cm of psi] (varphi) {$(\varphi, p_{\varphi}, X)$};
				\node[above= 1 cm of varphi] (varphi_hat) {$(\widetilde{\varphi}, p_{\varphi}, X)$};
				\draw[->] (psi)--(varphi) node [midway,below] {$f$};
				\draw[->,dashed] (psi)--(varphi_hat) node [midway,above] {$f$};
				\draw[->] (varphi_hat)--(varphi) node [midway,right] {$\varepsilon_{(\varphi, p_{\varphi}, X)}$};
				\end{tikzpicture}
			\end{center} 
			
			Part (3) follows from the observation that for any $(\varphi, p_{\varphi}, X)$ in ${\widehat{\mathcal{A}}}(S)$ and any $(\psi, p_{\psi}, Y)$ in ${\widetilde{\mathcal{A}}}(S)$ we have $(\varphi, p_{\varphi}, X)={\widehat{F}}{\widetilde{I}}{\widetilde{F}}{\widehat{I}}(\varphi, p_{\varphi}, X)$ and $(\psi, p_{\psi}, Y) = {\widetilde{F}}{\widehat{I}}{\widehat{F}}{\widetilde{I}}(\psi, p_{\psi}, Y)$. 
		\end{proof}
		
		\begin{corollary}\label{cor:cor2} Let $S$ be a restriction semigroup. Then:
			\begin{enumerate}[(1)]
				\item ${\widehat{\mathcal{A}}}(S)$ is a reflective subcategory of ${\mathcal{A}}(S)$, the reflector being the functor ${\widehat{F}}$;
				\item  ${\widetilde{\mathcal{A}}}(S)$ is a coreflective subcategory of ${\mathcal{A}}(S)$, the coreflector being the functor ${\widetilde{F}}$.
			\end{enumerate}
		\end{corollary}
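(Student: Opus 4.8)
The plan is to deduce both statements directly from Theorem \ref{th:adjunctions} together with the way ${\widehat{\mathcal{A}}}(S)$ and ${\widetilde{\mathcal{A}}}(S)$ were introduced in Subsection \ref{subs:domains}. First I would recall the relevant definitions: a full subcategory $\mathcal{C}$ of a category $\mathcal{D}$ is \emph{reflective} if the inclusion functor $\mathcal{C}\hookrightarrow\mathcal{D}$ has a left adjoint, called the \emph{reflector}, and it is \emph{coreflective} if the inclusion has a right adjoint, called the \emph{coreflector}. Since ${\widehat{\mathcal{A}}}(S)$ and ${\widetilde{\mathcal{A}}}(S)$ were defined as \emph{full} subcategories of ${\mathcal{A}}(S)$ --- they are singled out by imposing conditions (A3a) and (A3b) on objects only, with no restriction on morphisms --- verifying reflectivity and coreflectivity reduces to exhibiting the appropriate adjoints to the inclusion functors ${\widehat{I}}$ and ${\widetilde{I}}$.

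For part (1), Theorem \ref{th:adjunctions}(1) states exactly that ${\widehat{F}}\colon{\mathcal{A}}(S)\to{\widehat{\mathcal{A}}}(S)$ is a left adjoint of ${\widehat{I}}$, so ${\widehat{\mathcal{A}}}(S)$ is reflective with reflector ${\widehat{F}}$; for part (2), Theorem \ref{th:adjunctions}(2) states that ${\widetilde{F}}\colon{\mathcal{A}}(S)\to{\widetilde{\mathcal{A}}}(S)$ is a right adjoint of ${\widetilde{I}}$, so ${\widetilde{\mathcal{A}}}(S)$ is coreflective with coreflector ${\widetilde{F}}$. If one wants to be fully explicit about the (co)reflection morphisms, one can point to the unit $\eta_{(\varphi, p_{\varphi}, X)}={\mathrm{id}}_X$ and counit $\varepsilon_{(\varphi, p_{\varphi}, X)}={\mathrm{id}}_X$ produced in the proof of Theorem \ref{th:adjunctions}: $\eta_{(\varphi, p_{\varphi}, X)}$ is the reflection of $(\varphi, p_{\varphi}, X)$ into ${\widehat{\mathcal{A}}}(S)$, and $\varepsilon_{(\varphi, p_{\varphi}, X)}$ is the coreflection of $(\varphi, p_{\varphi}, X)$ out of ${\widetilde{\mathcal{A}}}(S)$.

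I do not expect any genuine obstacle: the mathematical content lives entirely in Theorem \ref{th:adjunctions}, and this corollary is purely a translation into the language of reflective and coreflective subcategories. The only thing to be careful about is to have recorded (as was done in Subsection \ref{subs:domains}) that ${\widehat{\mathcal{A}}}(S)$ and ${\widetilde{\mathcal{A}}}(S)$ are full subcategories of ${\mathcal{A}}(S)$, since ``reflective subcategory'' by convention refers to a full one.
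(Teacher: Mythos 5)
Your proposal is correct and matches the paper exactly: the corollary is stated there as an immediate consequence of Theorem \ref{th:adjunctions}, with no further argument needed beyond unwinding the definitions of reflective and coreflective (full) subcategories. Your added care in noting that ${\widehat{\mathcal{A}}}(S)$ and ${\widetilde{\mathcal{A}}}(S)$ are full subcategories is appropriate and consistent with how they are defined in Subsection \ref{subs:domains}.
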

		
		Let ${\mathcal{A}}_s(S)$, ${\widetilde{\mathcal{A}}}_s(S)$ and ${\widehat{\mathcal{A}}}_s(S)$ be the subcategories of  ${\mathcal{A}}(S)$, ${\widetilde{\mathcal{A}}}(S)$ and ${\widehat{\mathcal{A}}}(S)$, respectively, whose morphisms satisfy the following additional requirement:
		\begin{enumerate}
			\item[(M3)] ${\operatorname{\mathrm{dom}}\beta_s}\cap p_{\beta}^{-1}(s^*) = f(\operatorname{\mathrm{dom}}\alpha_s \cap p_{\alpha}^{-1}(s^*))$ for all $s\in S$.
		\end{enumerate}
		One can check that  (M3) is equivalent to the condition
		\begin{enumerate}
			\item[{(M3r)}] $\operatorname{\mathrm{ran}}\beta_s\cap p_{\beta}^{-1}(s^+) =  f(\operatorname{\mathrm{ran}}\alpha_s \cap p_{\alpha}^{-1}(s^+))$ for all $s\in S$.
		\end{enumerate}
		
		\begin{remark} \label{rem:n19} We note that for the morphisms of the categories ${\widetilde{\mathcal{A}}}_s(S)$ and ${\widehat{\mathcal{A}}}_s(S)$, each of (M3) and (M3r) is equivalent to $f(\operatorname{\mathrm{dom}}\alpha_s) = \operatorname{\mathrm{dom}}\beta_s$ for all $s\in S$.
		\end{remark}

		Let ${\widehat{F}}_s$, ${\widetilde{F}}_s$ be the restrictions of the functors ${\widehat{F}}$ and ${\widetilde{F}}$, respectively, to the category ${\mathcal{A}}_s(S)$; and ${\widetilde{I}}_s$, ${\widehat{I}}_s$ be the restrictions of the functors ${\widetilde{I}}$, ${\widehat{I}}$ to the categories  ${\widetilde{\mathcal{A}}}_s(S)$ and  ${\widehat{\mathcal{A}}}_s(S)$, respectively. The following is an immediate consequence of Theorem \ref{th:adjunctions}.
		
		\begin{corollary}\label{cor:adjunctions} Let $S$ be a restriction semigroup.
			\begin{enumerate}[(1)]
				\item The functor ${\widehat{F}}_s\colon {\mathcal{A}}_s(S) \to {\widehat{\mathcal{A}}}_s(S)$ is a left adjoint to the functor ${\widehat{I}}_s\colon {\widehat{\mathcal{A}}}_s(S) \to {\mathcal{A}}_s(S)$.
				\item The functor ${\widetilde{F}}_s\colon {\mathcal{A}}_s(S) \to {\widetilde{\mathcal{A}}}_s(S)$ is a right adjoint to the functor ${\widetilde{I}}_s\colon {\widetilde{\mathcal{A}}}_s(S) \to {\mathcal{A}}_s(S)$.
				\item The functors  ${\widetilde{F}}_s{\widehat{I}}_s \colon {\widehat{\mathcal{A}}}_s(S)\to {\widetilde{\mathcal{A}}}_s(S)$ and ${\widehat{F}}_s{\widetilde{I}}_s\colon {\widetilde{\mathcal{A}}}_s(S)\to {\widehat{\mathcal{A}}}_s(S)$ establish an isomorphism between the categories ${\widehat{\mathcal{A}}}_s(S)$ and ${\widetilde{\mathcal{A}}}_s(S)$.
			\end{enumerate}
		\end{corollary}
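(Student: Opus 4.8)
The plan is to derive the corollary from Theorem~\ref{th:adjunctions} by checking that everything constructed in its proof — the functors $\widehat{F},\widetilde{F},\widehat{I},\widetilde{I}$, the unit maps $\eta_{(\varphi,p_{\varphi},X)}={\mathrm{id}}_X$ and the counit maps $\varepsilon_{(\varphi,p_{\varphi},X)}={\mathrm{id}}_X$ — restricts to the subcategories cut out by condition (M3). The only ingredient that is not purely formal is the identification of the slices of the domains that enter (M3), which I would isolate as an auxiliary fact.

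The key auxiliary fact is that for every object $(\varphi,p,X)$ of $\mathcal{A}(S)$ and every $s\in S$,
\begin{equation}\label{eq:m3aux}
\operatorname{\mathrm{dom}}\widehat{\varphi}_s\cap p^{-1}(s^*)\;=\;\operatorname{\mathrm{dom}}\varphi_s\cap p^{-1}(s^*)\;=\;\operatorname{\mathrm{dom}}\widetilde{\varphi}_s\cap p^{-1}(s^*).
\end{equation}
For the second equality note that $\operatorname{\mathrm{dom}}\widetilde{\varphi}_s\subseteq\operatorname{\mathrm{dom}}\varphi_s$ by the definition of $\widetilde{\varphi}_s$ and (A1); conversely if $x\in\operatorname{\mathrm{dom}}\widetilde{\varphi}_s$ and $p(x)=s^*$ then $x\le y$ for some $y\in\operatorname{\mathrm{dom}}\varphi_s$, so $x\in\operatorname{\mathrm{dom}}\varphi_s$ since $\operatorname{\mathrm{dom}}\varphi_s$ is an order ideal. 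For the first equality $\operatorname{\mathrm{dom}}\varphi_s\subseteq\operatorname{\mathrm{dom}}\widehat{\varphi}_s$ is trivial, and if $x\in\operatorname{\mathrm{dom}}\widehat{\varphi}_s$ with $p(x)=s^*$, choose $t\le s$ with $x\in\operatorname{\mathrm{dom}}\varphi_t$; then $x\in\operatorname{\mathrm{dom}}\varphi_{t^*}$ by (A5) and $p(x)\le t^*$ by (A3), so $s^*\le t^*$, while $t^*\le s^*$ by Lemma~\ref{lem:lem1}\eqref{i:b6}, hence $t^*=s^*$; since $t\sim s$ by Lemma~\ref{lem:lem1}\eqref{i:b61}, Lemma~\ref{lem:lem1}\eqref{i:b8} gives $t=s$ and so $x\in\operatorname{\mathrm{dom}}\varphi_s$.

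Granting \eqref{eq:m3aux}, the rest is bookkeeping. The functors $\widehat{F}$, $\widetilde{F}$ leave the underlying map of a morphism unchanged, so by \eqref{eq:m3aux} the two sides of (M3) for $\widehat{F}(f)$ and $\widetilde{F}(f)$ agree with those for $f$; the inclusions preserve (M3) trivially. Hence $\widehat{F}_s,\widetilde{F}_s,\widehat{I}_s,\widetilde{I}_s$ are well defined. Applying \eqref{eq:m3aux} to $(\varphi,p_{\varphi},X)$ shows that the identity maps $\eta_{(\varphi,p_{\varphi},X)}$ and $\varepsilon_{(\varphi,p_{\varphi},X)}$ satisfy (M3), hence are morphisms of $\mathcal{A}_s(S)$. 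The universal properties then carry over verbatim from Theorem~\ref{th:adjunctions}: for a morphism $f$ of $\mathcal{A}_s(S)$ into $\widehat{I}_s(\psi,p_{\psi},Y)$ the factorisation through $\eta_{(\varphi,p_{\varphi},X)}$ produced there is again the map $f$, which satisfies (M3) as a morphism out of $\widehat{F}_s(\varphi,p_{\varphi},X)$ by another use of \eqref{eq:m3aux}, and uniqueness is inherited from $\mathcal{A}(S)$; this proves (1), and (2) is dual. Part (3) follows since the object-level identities $\widehat{F}\widetilde{I}\widetilde{F}\widehat{I}=\mathrm{id}$ and $\widetilde{F}\widehat{I}\widehat{F}\widetilde{I}=\mathrm{id}$ from the proof of Theorem~\ref{th:adjunctions}(3) restrict and $\widetilde{F}\widehat{I}$, $\widehat{F}\widetilde{I}$ preserve (M3).

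I expect no genuine obstacle: the whole content is that the identity unit/counit maps remain morphisms after passing to the (M3)-subcategories, which is exactly what \eqref{eq:m3aux} secures. An alternative route is via Remark~\ref{rem:n19}, under which (M3) for morphisms of $\widehat{\mathcal{A}}_s(S)$ and $\widetilde{\mathcal{A}}_s(S)$ becomes the visibly functorial condition $f(\operatorname{\mathrm{dom}}\alpha_s)=\operatorname{\mathrm{dom}}\beta_s$, with \eqref{eq:m3aux} again used to match this with the general form of (M3) in $\mathcal{A}_s(S)$; compare also Corollary~\ref{cor:cor2}.
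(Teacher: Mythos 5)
Your proof is correct and fleshes out exactly what the paper leaves implicit: the corollary is stated there as an ``immediate consequence'' of Theorem~\ref{th:adjunctions}, and your auxiliary identity $\operatorname{\mathrm{dom}}\widehat{\varphi}_s\cap p^{-1}(s^*)=\operatorname{\mathrm{dom}}\varphi_s\cap p^{-1}(s^*)=\operatorname{\mathrm{dom}}\widetilde{\varphi}_s\cap p^{-1}(s^*)$ is precisely the fact that makes the unit, counit and the functors $\widehat{F},\widetilde{F}$ restrict to the (M3)-subcategories. The only blemish is expository: in your verification of the second equality the ``conversely'' clause re-proves the inclusion $\operatorname{\mathrm{dom}}\widetilde{\varphi}_s\cap p^{-1}(s^*)\subseteq\operatorname{\mathrm{dom}}\varphi_s$ rather than the (immediate) reverse inclusion $\operatorname{\mathrm{dom}}\varphi_s\cap p^{-1}(s^*)\subseteq\operatorname{\mathrm{dom}}\widetilde{\varphi}_s$, but both inclusions are trivially available, so nothing is lost.
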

		
		A statement analogous to Corollary \ref{cor:cor2} also follows.

		\begin{remark}\label{rem:trivial} Let $S$ be a monoid and $(\varphi, p, X)$ an object of ${\mathcal{A}}(S)$. Then $p$ is the only morphism $X\to P(S)$. Note that condition (A3) says that $\varphi$ is unital, and, moreover,  (A3a) and (A3b) trivially hold. It follows that ${\mathcal{A}}(S) = {\widetilde{\mathcal{A}}}(S) = {\widehat{\mathcal{A}}}(S)$,  and, similarly, ${\mathcal{A}}_s(S) = {\widetilde{\mathcal{A}}}_s(S) = {\widehat{\mathcal{A}}}_s(S)$, and in this case the statements of Theorem \ref{th:adjunctions}, Corollary \ref{cor:adjunctions} and Corollary \ref{cor:cor2} become trivial.
		\end{remark}

		\subsection{Equivalence of categories of proper extensions and partial actions of $S$} \label{subs:equiv}
		Let ${\mathcal{P}}(S)$ be the category whose objects are proper morphisms $\psi\colon T\to S$ where $T$ is a restriction semigroup (and $S$ is our fixed restriction semigroup). A {\em morphism} from $\psi_1\colon T_1\to S$ to $\psi_2\colon T_2\to S$ is a morphism $\gamma\colon T_1\to T_2$ of restriction semigroups such that $\psi_2\gamma = \psi_1$.
		
		We construct a functor $U\colon  {\mathcal{A}}(S)\to {\mathcal{P}}(S)$. Let $(\alpha, p_{\alpha}, X)$ be an object of ${\mathcal{A}}(S)$.  We put $U(\alpha, p_{\alpha}, X)$ to be the proper morphism $X \rtimes^{p_{\alpha}}_{\alpha} S \to S$, $(x,s)\mapsto s$. Let now $p\colon X\to Y$ be a morphism from $(\alpha, p_{\alpha}, X)$  to $(\beta, p_{\beta}, Y)$. We define $U(p) \colon X \rtimes^{p_{\alpha}}_{\alpha} S \to Y \rtimes^{p_{\beta}}_{\beta} S$ by the assignment $(x,s)\mapsto (p(x),s)$. By  (M1) and (M2r) this is well defined. We show that $U(p)$ is a morphism of restriction semigroups. Let $(x,s), (y,t)$ $\in X \rtimes^{p_{\alpha}}_{\alpha} S$. We have $U(p)((x,s)(y,t)) = (p(\alpha_s(\alpha_s^{-1}(x)\wedge y)), st)$ and 
		$$U(p)(x,s)U(p)(y,t) = (p(x),s)(p(y),t) = (\beta_s(\beta_s^{-1}(p(x))\wedge p(y)), st).$$  
		It suffices to show that 
		$p(\alpha_s(\alpha_s^{-1}(x)\wedge y)) = \beta_s(\beta_s^{-1}(p(x))\wedge p(y))$. The left-hand side of the latest equality is equal to
		$\beta_s(p(\alpha_s^{-1}(x)\wedge y))$. It is thus enough to verify the equality $p(\alpha_s^{-1}(x)\wedge y)=\beta_s^{-1}(p(x))\wedge p(y)$. The latter equality easily follows because $p$ is semilattice  morphism and since $p(\alpha_s^{-1}(x)) =  \beta_s^{-1}(p(x))$ by (M2r). That $U(p)$ respects the unary operations $^*$ and $^+$ is easier to show, and we leave this to the reader. It follows that $U$ is a functor. We denote $\widehat{U} = U\widehat{I} \colon {\widehat{\mathcal{A}}}(S)\to {\mathcal{P}}(S)$ and $\widetilde{U} = U\widetilde{I} \colon {\widetilde{\mathcal{A}}}(S)\to {\mathcal{P}}(S)$.

		The following is an immediate consequence of (M3r).
		\begin{lemma}\label{lem:n1} If a morphism $p\colon X\to Y$ from $(\alpha, p_{\alpha}, X)$ to $(\beta, p_{\beta}, Y)$ satisfies (M3)  then  the map $U(p)$ is surjective.
		\end{lemma}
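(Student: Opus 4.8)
The plan is to simply unwind the definitions of the two partial action products involved and invoke condition (M3r). First I would fix an arbitrary element $(y,s)$ of the codomain $Y\rtimes^{p_{\beta}}_{\beta} S$ of $U(p)$ and recall that, by the definition of the partial action product, the membership $(y,s)\in Y\rtimes^{p_{\beta}}_{\beta} S$ means precisely that $y\in\operatorname{\mathrm{ran}}\beta_s$ and $p_{\beta}(y)=s^+$; equivalently, $y\in\operatorname{\mathrm{ran}}\beta_s\cap p_{\beta}^{-1}(s^+)$.

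Next, since $p$ is a morphism in the relevant category satisfying (M3), it also satisfies the equivalent condition (M3r) recorded just above, so $\operatorname{\mathrm{ran}}\beta_s\cap p_{\beta}^{-1}(s^+)=p\bigl(\operatorname{\mathrm{ran}}\alpha_s\cap p_{\alpha}^{-1}(s^+)\bigr)$. Hence there is an element $x\in\operatorname{\mathrm{ran}}\alpha_s\cap p_{\alpha}^{-1}(s^+)$ with $p(x)=y$. By the very same description of the partial action product, $x\in\operatorname{\mathrm{ran}}\alpha_s$ together with $p_{\alpha}(x)=s^+$ yields $(x,s)\in X\rtimes^{p_{\alpha}}_{\alpha} S$, and then $U(p)(x,s)=(p(x),s)=(y,s)$. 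As $(y,s)$ was arbitrary, $U(p)$ is surjective.

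There is essentially no obstacle in this argument; the single point worth care is to use the range form (M3r) of the condition rather than (M3) itself, since elements of a partial action product are indexed by the range of the corresponding partial bijection, and the equivalence of (M3) and (M3r) has already been established earlier in this subsection.
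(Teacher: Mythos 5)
Your argument is correct and is exactly the intended one: the paper gives no details, stating only that the lemma is ``an immediate consequence of (M3r)'', and your proof fills in precisely that unwinding of the definitions (an element $(y,s)$ of $Y\rtimes^{p_{\beta}}_{\beta} S$ lies in $\operatorname{\mathrm{ran}}\beta_s\cap p_{\beta}^{-1}(s^+)$ in its first coordinate, and (M3r) produces a preimage in $\operatorname{\mathrm{ran}}\alpha_s\cap p_{\alpha}^{-1}(s^+)$). Your remark that the range form (M3r), rather than (M3) itself, is the one to use is also exactly the point the paper is making.
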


		We now construct a functor ${\widehat{G}}\colon {\mathcal{P}}(S) \to {\widehat{\mathcal{A}}}(S)$. Let $\psi\colon T\to S$ be an object of  ${\mathcal{P}}(S)$. We define ${\widehat{G}}(\psi)=(\widehat{\psi}, \psi|_{P(T)}, P(T))$. This is well defined by Proposition \ref{prop:psi_hat_tilde}. Let $\gamma\colon T_1\to T_2$ be a morphism, from $\alpha\colon T_1\to S$ to $\beta\colon T_2\to S$, in ${\mathcal{P}}(S)$. Put ${\widehat{G}}(\gamma) = \gamma|_{P(T_1)}\colon P(T_1)\to P(T_2)$. It is routine to verify that ${\widehat{G}}(\gamma)$ is well defined. That the assignment $\gamma\mapsto {\widehat{G}}(\gamma)$ is functorial is immediate. Thus ${\widehat{G}}$ is a functor.

		\begin{lemma}\label{lem:n2} If $\gamma\colon T_1 \to  T_2$ is surjective then ${\widehat{G}}(\gamma)$ satisfies (M3). 
		\end{lemma}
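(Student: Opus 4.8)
The plan is to verify condition (M3) directly for the semilattice morphism $f={\widehat{G}}(\gamma)=\gamma|_{P(T_1)}$. Write $\alpha\colon T_1\to S$ and $\beta\colon T_2\to S$ for the proper morphisms with $\beta\gamma=\alpha$, and set $p_\alpha=\alpha|_{P(T_1)}$, $p_\beta=\beta|_{P(T_2)}$; then $f$ is a morphism from $(\widehat{\alpha},p_\alpha,P(T_1))$ to $(\widehat{\beta},p_\beta,P(T_2))$, and (M3) for $f$ reads
$$\operatorname{\mathrm{dom}}\widehat{\beta}_s\cap p_\beta^{-1}(s^*)=\gamma\bigl(\operatorname{\mathrm{dom}}\widehat{\alpha}_s\cap p_\alpha^{-1}(s^*)\bigr)\qquad\text{for all }s\in S.$$
First I would note that the inclusion $\supseteq$ holds for \emph{every} morphism of ${\widehat{\mathcal{A}}}(S)$, with no surjectivity needed: if $e\in\operatorname{\mathrm{dom}}\widehat{\alpha}_s\cap p_\alpha^{-1}(s^*)$ then $\gamma(e)\in\operatorname{\mathrm{dom}}\widehat{\beta}_s$ by (M2) and $p_\beta(\gamma(e))=p_\alpha(e)=s^*$ by (M1). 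So the content of the lemma is the reverse inclusion, and that is precisely where surjectivity of $\gamma$ will be used.

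The key preliminary step is the description
$$\operatorname{\mathrm{dom}}\widehat{\psi}_s\cap(\psi|_{P(T)})^{-1}(s^*)=\{\,t^*\colon t\in T,\ \psi(t)=s\,\},$$
valid for any proper morphism $\psi\colon T\to S$. Here the inclusion $\supseteq$ is immediate: $t^*\le t^*$ and $\psi(t)=s\le s$ give $t^*\in\operatorname{\mathrm{dom}}\widehat{\psi}_s$, while $\psi(t^*)=\psi(t)^*=s^*$. For $\subseteq$, given $e$ in the left-hand side we have $\psi(e)=s^*$ and $e\le t^*$ for some $t\in T$ with $\psi(t)\le s$; I would put $u=te$ and check, using \eqref{eq:rule1} and $e\le t^*$, that $u^*=t^*e=e$ and $u=tu^*$, so $u\le t$ by Lemma~\ref{lem:lem1}\eqref{i:b2}, whence $\psi(u)=\psi(t)\psi(u)^*\le\psi(t)\le s$; since moreover $\psi(u)^*=\psi(e)=s^*$, Lemma~\ref{lem:lem1}\eqref{i:b61} and \eqref{i:b8} force $\psi(u)=s$. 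Thus $e=u^*$ with $\psi(u)=s$, as claimed.

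Granting this, the lemma finishes at once. Let $e\in\operatorname{\mathrm{dom}}\widehat{\beta}_s\cap p_\beta^{-1}(s^*)$; applying the description to $\beta$ yields $u\in T_2$ with $\beta(u)=s$ and $u^*=e$. As $\gamma$ is surjective, choose $r\in T_1$ with $\gamma(r)=u$. Then $\gamma(r^*)=\gamma(r)^*=u^*=e$, and $\alpha(r)=\beta\gamma(r)=\beta(u)=s$, so $r^*\in\operatorname{\mathrm{dom}}\widehat{\alpha}_s$ (because $\alpha(r)=s\le s$) with $p_\alpha(r^*)=\alpha(r)^*=s^*$; hence $r^*\in\operatorname{\mathrm{dom}}\widehat{\alpha}_s\cap p_\alpha^{-1}(s^*)$ and $\gamma(r^*)=e$, so $e\in\gamma\bigl(\operatorname{\mathrm{dom}}\widehat{\alpha}_s\cap p_\alpha^{-1}(s^*)\bigr)$, which is the inclusion $\subseteq$.

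I expect the only non-formal point to be the reduction in the preliminary step: rewriting an arbitrary element of $\operatorname{\mathrm{dom}}\widehat{\psi}_s\cap p^{-1}(s^*)$ as $u^*$ with $\psi(u)=s$ \emph{exactly} (not merely $\psi(u)\le s$), which is what allows $u$ to be lifted through the surjection $\gamma$. Everything else is bookkeeping with the definition of $\widehat{\psi}_s$, conditions (M1) and (M2), and the order and compatibility facts in Lemma~\ref{lem:lem1}. One could alternatively establish the two inclusions of (M3) directly, without isolating this description, but the computation would be essentially the same.
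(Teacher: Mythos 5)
Your proposal is correct and follows essentially the same route as the paper: both proofs rewrite an element $e$ of $\operatorname{\mathrm{dom}}\widehat{\beta}_s\cap p_\beta^{-1}(s^*)$ as $u^*$ for some $u\in T_2$ with $\beta(u)=s$ exactly (the paper forms $t_1=ty$ and you form $u=te$, which is the same element), and then lift $u$ through the surjection $\gamma$. Your only additions — explicitly checking the trivial inclusion $\supseteq$ and isolating the description of $\operatorname{\mathrm{dom}}\widehat{\psi}_s\cap p^{-1}(s^*)$ as a preliminary step — are cosmetic reorganizations of the same argument.
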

		\begin{proof} Suppose that $\gamma$ is a surjective morphism from $\alpha\colon T_1\to S$ to $\beta\colon T_2\to S$, and let $s\in S$. The proof amounts to verification of the inclusion
			${\operatorname{\mathrm{dom}}{\widehat\beta}_s} \, \cap \, \beta^{-1}(s^*) \subseteq \gamma({\operatorname{\mathrm{dom}}{\widehat\alpha}_s} \, \cap \, \alpha^{-1}(s^*))$. Let $y\in {\operatorname{\mathrm{dom}}{\widehat\beta}_s} \, \cap \, \beta^{-1}(s^*)$. Then $y\leq t^*$ for some $t\in T_2$ such that $\beta(t)\leq s$ and $\beta(y)=s^*$. Since $s^* = \beta(y)\leq \beta(t^*)\leq s^*$, we have $\beta(t^*)=s^*$. Thus $\beta(t)^*=s^*$. Since also $\beta(t)\leq s$, we have $\beta(t) = s$, by parts \eqref{i:b61} and \eqref{i:b8} of Lemma \ref{lem:lem1}.  Let $t_1=ty$. Then $\beta(t_1) = \beta(ty) = \beta(t)\beta(y) = ss^* =s$ and $y=t^*y=(ty)^*=t_1^*$. Because $\gamma$ is surjective, there is  $u \in T_1$ such that $\gamma(u)=t_1$. Observe that $\alpha(u) = \beta\gamma(u) = s$, thus $u^*\in {\operatorname{\mathrm{dom}}{\widehat\alpha}_s} \, \cap \, \alpha^{-1}(s^*)$, so that $y\in \gamma({\operatorname{\mathrm{dom}}{\widehat\alpha}_s} \, \cap \, \alpha^{-1}(s^*))$.
		\end{proof}
		
		\begin{theorem} \label{th:equivalence_of_cats}  Let $S$ be a restriction semigroup.  The functors ${\widehat{U}}\colon  {\widehat{\mathcal{A}}}(S)\to {\mathcal{P}}(S)$ and ${\widehat{G}}\colon {\mathcal{P}}(S) \to {\widehat{\mathcal{A}}}(S)$ establish an equivalence between the categories ${\widehat{\mathcal{A}}}(S)$ and ${\mathcal{P}}(S)$.
		\end{theorem}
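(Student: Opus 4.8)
The plan is to produce natural isomorphisms ${\widehat U}{\widehat G}\cong\mathrm{Id}_{{\mathcal P}(S)}$ and ${\widehat G}{\widehat U}\cong\mathrm{Id}_{{\widehat{\mathcal A}}(S)}$; since the two functors are already known to be well defined, this is all that an equivalence requires. The first isomorphism comes essentially for free from Theorem~\ref{th:isom_epsilon}. Indeed, for an object $\psi\colon T\to S$ of ${\mathcal P}(S)$, writing $p=\psi|_{P(T)}$, we have ${\widehat G}(\psi)=(\widehat\psi,p,P(T))$ and ${\widehat U}{\widehat G}(\psi)$ is the proper morphism $P(T)\rtimes^p_{\widehat\psi}S\to S$, $(e,s)\mapsto s$, while Theorem~\ref{th:isom_epsilon} supplies an isomorphism of restriction semigroups ${\widehat\eta}_{(\psi,T)}\colon T\to P(T)\rtimes^p_{\widehat\psi}S$, $t\mapsto(t^+,\psi(t))$. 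I would take ${\widehat\eta}_{(\psi,T)}$ as the component at $\psi$ of a natural transformation $\mathrm{Id}_{{\mathcal P}(S)}\Rightarrow{\widehat U}{\widehat G}$; it is a morphism of ${\mathcal P}(S)$ because the composite $T\to P(T)\rtimes^p_{\widehat\psi}S\to S$ equals $\psi$, and naturality is the one-line check that for $\gamma\colon T_1\to T_2$ from $\psi_1$ to $\psi_2$ both $({\widehat U}{\widehat G}(\gamma))\circ{\widehat\eta}_{(\psi_1,T_1)}$ and ${\widehat\eta}_{(\psi_2,T_2)}\circ\gamma$ send $t$ to $(\gamma(t^+),\psi_1(t))$.

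For the second composite, fix $(\varphi,p,X)$ in ${\widehat{\mathcal A}}(S)$, put $T=X\rtimes^p_\varphi S$ and let $\Psi\colon T\to S$, $(y,s)\mapsto s$, be ${\widehat U}(\varphi,p,X)$, so that ${\widehat G}{\widehat U}(\varphi,p,X)=(\widehat\Psi,\Psi|_{P(T)},P(T))$. By Lemma~\ref{lem:properties_extension}(1) the map $\theta\colon X\to P(T)$, $y\mapsto(y,p(y))$, is a semilattice isomorphism, and I claim it is an isomorphism $(\varphi,p,X)\to(\widehat\Psi,\Psi|_{P(T)},P(T))$ in ${\widehat{\mathcal A}}(S)$, natural in $(\varphi,p,X)$. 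Condition (M1) is immediate since $\Psi|_{P(T)}(\theta(y))=p(y)$. The content is (M2): one must show that $\theta$ maps $\operatorname{\mathrm{dom}}\varphi_s$ \emph{onto} $\operatorname{\mathrm{dom}}\widehat\Psi_s$ and intertwines $\varphi_s$ with $\widehat\Psi_s$. Unwinding the definition of $\widehat\Psi_s$ and the description of the order on $T$ from Lemma~\ref{lem:properties_extension}(2), one checks that $(y,p(y))\in\operatorname{\mathrm{dom}}\widehat\Psi_s$ exactly when there are $t\le s$ and $f\in\operatorname{\mathrm{dom}}\varphi_t$ with $p(f)=t^*$ and $y\le f$; here the ``if'' direction uses the element $(\varphi_t(f),t)\in T$ and the ``only if'' direction sets $f=\varphi_t^{-1}(x)$ for the witnessing $(x,t)\in T$, membership and $p$-values being computed through Lemma~\ref{lem:q}. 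Condition (A3a), which holds since $(\varphi,p,X)\in{\widehat{\mathcal A}}(S)$, identifies this set with $\{y: y\le f\text{ for some }f\in\operatorname{\mathrm{dom}}\varphi_s\}$, which equals $\operatorname{\mathrm{dom}}\varphi_s$ because of the order-ideal condition (A1); thus $\theta(\operatorname{\mathrm{dom}}\varphi_s)=\operatorname{\mathrm{dom}}\widehat\Psi_s$.

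For the action, with $(x,t)$ as above and $f=\varphi_t^{-1}(x)\ge y$, we get $(x,t)(y,p(y))=(\varphi_t(y),t\,p(y))$, hence $\widehat\Psi_s(\theta(y))=((x,t)(y,p(y)))^+=(\varphi_t(y),(t\,p(y))^+)$, and this equals $\theta(\varphi_s(y))$ since $\varphi_t(y)=\varphi_s(y)$ (because $\varphi$ is order-preserving by Lemma~\ref{lem:prop_hat_tilde1}(1), so $\varphi_t$ is the restriction of $\varphi_s$) and $(t\,p(y))^+=p(\varphi_t(y))$ by Lemma~\ref{lem:q}(1). As $\theta^{-1}$ is again a semilattice morphism and the domain equality above holds, both $\theta$ and $\theta^{-1}$ satisfy (M1) and (M2), so $\theta$ is an isomorphism in ${\widehat{\mathcal A}}(S)$; its naturality is the observation that a morphism $f$ of ${\widehat{\mathcal A}}(S)$ induces $U(f)\colon(y,s)\mapsto(f(y),s)$, whose restriction to projections is $\theta_{X'}\circ f\circ\theta_X^{-1}$.

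The only genuine obstacle is the verification of (M2) for $\theta$ — matching the reconstructed premorphism $\widehat\Psi$ with the original $\varphi$ on the nose through the canonical isomorphism $X\cong P(T)$. This requires careful bookkeeping: translating $\operatorname{\mathrm{dom}}\widehat\Psi_s$ and $\widehat\Psi_s$ into statements about $X$ via Lemma~\ref{lem:properties_extension}(2), constructing and checking witnesses in $T$ with the help of Lemma~\ref{lem:q}, and invoking precisely condition (A3a) together with (A1) to collapse the ``there exists $t\le s$'' description back to $\operatorname{\mathrm{dom}}\varphi_s$ and to obtain the domain equality that makes $\theta^{-1}$ a morphism. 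Everything else — well-definedness and functoriality of ${\widehat U}$ and ${\widehat G}$, that the components land in the correct categories, and naturality — is routine.
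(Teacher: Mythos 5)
Your proposal is correct and follows essentially the same route as the paper: the unit is the map $x\mapsto(x,p(x))$ (the paper's $f_{(\varphi,p,X)}$), with (M2) and the domain equality verified via Lemma~\ref{lem:properties_extension}, Lemma~\ref{lem:q}, (A3a), (A1) and order-preservation exactly as you describe, and the counit is ${\widehat\eta}_{(\psi,T)}$ from Theorem~\ref{th:isom_epsilon}, with the same naturality computations.
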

		
		\begin{proof} 
			Let $(\varphi, p, X)$ be an object of ${\widehat{\mathcal{A}}}(S)$. We show that the map $$f_{(\varphi,p,X)}\colon X\to P(X\rtimes^p_{\varphi} S), \, x\mapsto (x,p(x)),$$ is an isomorphism, from $(\varphi, p, X)$ to ${\widehat{G}}{\widehat{U}}(\varphi, p, X)$, in ${\widehat{\mathcal{A}}}(S)$. We put ${\widehat{U}}(\varphi, p, X)=\Psi$ where $\Psi\colon X\rtimes^p_{\varphi} S \to S$, $(x,s)\mapsto s$. Then $${\widehat{G}}{\widehat{U}}(\varphi, p, X) = (\widehat{\Psi}, \widehat{\Psi}|_{P(X\rtimes^p_{\varphi} S)}, P(X\rtimes^p_{\varphi} S)).$$ 
			Clearly, $f_{(\varphi,p,X)}$ is an isomorphism of semilattices. We are left to show that 
			$$\operatorname{\mathrm{dom}}\widehat{\Psi}_s = f_{(\varphi,p,X)}(\operatorname{\mathrm{dom}}\varphi_s) \text{ and } \widehat{\Psi}_s(f_{(\varphi,p,X)}(x)) =f_{(\varphi,p,X)}(\varphi_s(x))$$ 
			for all $s\in S$ and $x\in \operatorname{\mathrm{dom}}\varphi_s$.
			Let $f_{(\varphi,p,X)}(x)=(x, p(x)) \in \operatorname{\mathrm{dom}}\widehat{\Psi}_s$. By  the definition of $ \widehat{\Psi}_s$ this means that $(x, p(x))\leq (y,r)^*$ where $\Psi(y,r)\leq s$. Since $(y,r)\in X\rtimes^p_{\varphi} S$, we also have $y\in \operatorname{\mathrm{ran}}\varphi_r$ and $p(y)=r^+$. Applying~\eqref{eq:def_Psi}, \eqref{eq:def_star_plus} and Lemma \ref{lem:properties_extension}, we obtain: $x\leq \varphi_r^{-1}(y)$, $p(x)\leq r^*$ and $r\leq s$. It follows that $x\in \operatorname{\mathrm{dom}}\varphi_r$. Since, by Lemma \ref{lem:prop_hat_tilde1}, $\varphi$ is order-preserving, it follows that $x\in \operatorname{\mathrm{dom}}\varphi_s$.
			Conversely, suppose that $x\in \operatorname{\mathrm{dom}}\varphi_s$ and  let us show that $f_{(\varphi,p,X)}(x)=(x, p(x)) \in \operatorname{\mathrm{dom}}\widehat{\Psi}_s$. By (A3a) we have that $x\in \operatorname{\mathrm{dom}}\varphi_t\cap p^{-1}(t^*)$ for some $t\leq s$. We have $\varphi_t(x)\in \operatorname{\mathrm{ran}}\varphi_t$ and, by Lemma \ref{lem:q}\eqref{i:lem:q1}, $\varphi_t(x) \in p^{-1}(t^+)$ so that
			$(\varphi_t(x), t)\in X\rtimes^p_{\varphi} S$.  Since $(x, t^*) = (\varphi_t(x), t)^*$ and $\Psi(\varphi_t(x), t) = t\leq s$, it follows that $f_{(\varphi,p,X)}(x)=(x, t^*)\in \operatorname{\mathrm{dom}}\widehat{\Psi}_s$, as needed. Finally, let $(x, p(x)) \in \operatorname{\mathrm{dom}}\widehat{\Psi}_s = f_{(\varphi,p,X)}(\operatorname{\mathrm{dom}}\varphi_s)$. Take $(y,r)\in X\rtimes^p_{\varphi} S$ such that $(x, p(x))\leq (y,r)^*$ where $r\leq s$.
			Then
			\begin{align*}\widehat{\Psi}_s (f_{(\varphi,p,X)}(x)) & = \widehat{\Psi}_s (x, p(x)) = ((y,r)(x,p(x)))^+ & \\
			& = (\varphi_r(\varphi_r^{-1}(y)\wedge x), rp(x))^+  &  \\
			& = (\varphi_r(x), (rp(x))^+) & (\text{since } x\leq \varphi_r^{-1}(y))\\
			& = f_{(\varphi,p,X)}(\varphi_r(x)) &(\text{by Lemma \ref{lem:q}(1)}) \\
			& = f_{(\varphi,p,X)}(\varphi_s(x)). &(\text{by Lemma \ref{lem:prop_hat_tilde1}(1)})
			\end{align*}
			Moreover, if $(\varphi_i, p_i, X_i)$, $i=1,2$, are objects of ${\widehat{\mathcal{A}}}(S)$ and $\gamma\colon X_1\to X_2$ is a morphism from $(\varphi_1, p_1, X_1)$ to $(\varphi_2, p_2, X_2)$ then
			${\widehat{G}}{\widehat{U}}(\gamma)\colon P(X_1\rtimes^{p_1}_{\varphi_1} S) \to P(X_2\rtimes^{p_2}_{\varphi_2} S)$ is given by  $(x,p_1(x))\mapsto (\gamma(x), p_1(x)) = (\gamma(x), p_2(\gamma(x)))$ whence the isomorphism $f_{(\varphi, p, X)}$ is easily seen to be natural in $(\varphi, p, X)$.

			In the reverse direction, let $\psi\colon T\to S$ be an object of the category ${\mathcal{P}}(S)$. Let us abbreviate $\psi\colon T\to S$ by $(\psi, T)$. Set  $p=\psi|_{P(T)}$. Then ${\widehat{U}}{\widehat{G}}(\psi,T)$ is the proper morphism $\Psi_{(\psi, T)}\colon P(T)\rtimes^p_{\widehat{\psi}} S \to S$, $(e,s)\mapsto s$. The proof of Theorem~\ref{th:isom_epsilon}  implies that the map ${\widehat{\eta}}_{(\psi, T)} \colon T \to P(T)\rtimes^p_{\widehat{\psi}} S$ thereof is an isomorphism from $(\psi, T)$ to $(\Psi_{(\psi, T)}, P(T)\rtimes^p_{\widehat{\psi}} S)$ in the category ${\mathcal{P}}(S)$. Moreover, if $\gamma\colon T_1\to T_2$ is a morphism from $(\psi_1, T_1)$ to $(\psi_2, T_2)$ in the category ${\mathcal{P}}(S)$ then ${\widehat{U}}{\widehat{G}}(\gamma)\colon (\Psi_{(\psi_1, T_1)}, P(T_1)\rtimes^{p_1}_{\widehat{\psi_1}} S) \to (\Psi_{(\psi_2, T_2)}, P(T_2)\rtimes^{p_2}_{\widehat{\psi_2}} S)$ is given by $(e,s)\mapsto (\gamma(e),s)$.  Hence ${\widehat{\eta}}_{(\psi, T)}$ is natural in $(\psi, T)$.
			This completes the proof. 
		\end{proof}
		
		We put ${\widetilde{G}} = {\widetilde{F}}{\widehat{I}}{\widehat{G}}\colon {\mathcal{P}}(S) \to {\widetilde{\mathcal{A}}}(S)$.   Theorem \ref{th:adjunctions} implies the following.
		
		\begin{corollary}\label{cor:cor12} Let $S$ be a restriction semigroup. 
			\begin{enumerate}[(1)]
				\item The functors ${\widetilde{U}}\colon  {\widetilde{\mathcal{A}}}(S)\to {\mathcal{P}}(S)$ and ${\widetilde{G}}\colon {\mathcal{P}}(S) \to {\widetilde{\mathcal{A}}}(S)$ establish an equivalence between the categories ${\widetilde{\mathcal{A}}}(S)$ and ${\mathcal{P}}(S)$.
				\item The functor $U\colon {\mathcal{A}}(S)\to {\mathcal{P}}(S)$ is a left adjoint to the functor ${\widehat{I}}{\widehat{G}}\colon {\mathcal{P}}(S) \to  {\mathcal{A}}(S)$.
				\item The functor $U\colon {\mathcal{A}}(S)\to {\mathcal{P}}(S)$ is a right adjoint to the functor ${\widetilde{I}}{\widetilde{G}}\colon {\mathcal{P}}(S) \to  {\mathcal{A}}(S)$.
			\end{enumerate}
		\end{corollary}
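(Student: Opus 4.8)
The plan is to deduce all three statements formally from the equivalence $(\widehat{U},\widehat{G})$ of Theorem~\ref{th:equivalence_of_cats} together with the adjunctions and the category isomorphism of Theorem~\ref{th:adjunctions}, the only non-formal ingredient being a single identity of functors.

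First I would establish that $\widehat{U}\widehat{F}=U=\widetilde{U}\widetilde{F}$ as functors $\mathcal{A}(S)\to\mathcal{P}(S)$. It is enough to check that for every object $(\varphi,p,X)$ of $\mathcal{A}(S)$ the three partial action products coincide on the nose, $X\rtimes^p_{\varphi}S=X\rtimes^p_{\widehat{\varphi}}S=X\rtimes^p_{\widetilde{\varphi}}S$, together with their proper morphisms to $S$, where $\widehat{\varphi},\widetilde{\varphi}$ are the premorphisms produced by $\widehat{F},\widetilde{F}$. The key point is that the underlying set $\{(y,s):y\in\operatorname{\mathrm{ran}}\varphi_s,\ p(y)=s^+\}$ is unchanged: if $y\in\operatorname{\mathrm{ran}}\widehat{\varphi}_s$ and $p(y)=s^+$, then $y=\varphi_t(x)$ for some $t\le s$, whence by (A5), Lemma~\ref{lem:proj_identity} and (A3) one gets $p(y)\le t^+\le s^+=p(y)$, so $t^+=s^+$; since $t\le s$ gives $t\sim s$ by Lemma~\ref{lem:lem1}\eqref{i:b61}, Lemma~\ref{lem:lem1}\eqref{i:b8} forces $t=s$, so $y\in\operatorname{\mathrm{ran}}\varphi_s$; dually, $\{y\in\operatorname{\mathrm{ran}}\varphi_s:p(y)=s^+\}\subseteq\operatorname{\mathrm{ran}}\widetilde{\varphi}_s$ follows from Lemma~\ref{lem:q}\eqref{i:lem:q2} applied to $x=\varphi_s^{-1}(y)$. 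Since the domains are order ideals by (A1), once the underlying sets agree the operations \eqref{product-in-Y-rt-S} and \eqref{eq:def_star_plus} agree as well; and $U$ acts on a morphism $f$ by $(x,s)\mapsto(f(x),s)$, independently of the premorphism, so the identity of functors follows. (This is the analogue for general objects of $\mathcal{A}(S)$ of the identity $P(T)\rtimes^p_{\widehat{\psi}}S=P(T)\rtimes^p_{\widetilde{\psi}}S$ coming from Lemma~\ref{lem:well_def}(3).)

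The remaining argument is bookkeeping with composites of adjoints and equivalences, using that an equivalence may be taken to be an adjoint equivalence on both sides. For~(1): composing the equivalence $(\widehat{U},\widehat{G})$ with the mutually inverse isomorphisms $\widetilde{F}\widehat{I}\colon\widehat{\mathcal{A}}(S)\to\widetilde{\mathcal{A}}(S)$ and $\widehat{F}\widetilde{I}\colon\widetilde{\mathcal{A}}(S)\to\widehat{\mathcal{A}}(S)$ of Theorem~\ref{th:adjunctions}(3) gives an equivalence between $\widetilde{\mathcal{A}}(S)$ and $\mathcal{P}(S)$ realized by $\widehat{U}\widehat{F}\widetilde{I}$ and $\widetilde{F}\widehat{I}\widehat{G}=\widetilde{G}$ (the composites $\widehat{F}\widetilde{I}\widetilde{F}\widehat{I}$ and $\widetilde{F}\widehat{I}\widehat{F}\widetilde{I}$ being identity functors by Theorem~\ref{th:adjunctions}(3)); by the identity above $\widehat{U}\widehat{F}\widetilde{I}=U\widetilde{I}=\widetilde{U}$, so $(\widetilde{U},\widetilde{G})$ is an equivalence. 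For~(2): from $\widehat{F}\dashv\widehat{I}$ (Theorem~\ref{th:adjunctions}(1)) and $\widehat{U}\dashv\widehat{G}$ one gets $U=\widehat{U}\widehat{F}\dashv\widehat{I}\widehat{G}$. For~(3): from $\widetilde{I}\dashv\widetilde{F}$ (Theorem~\ref{th:adjunctions}(2)) and $\widetilde{G}\dashv\widetilde{U}$ (from~(1)) one gets $\widetilde{I}\widetilde{G}\dashv\widetilde{U}\widetilde{F}=U$, i.e.\ $U$ is right adjoint to $\widetilde{I}\widetilde{G}$.

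The main obstacle is the identity of functors $\widehat{U}\widehat{F}=U=\widetilde{U}\widetilde{F}$; everything after it is formal. One should also verify that the natural isomorphisms witnessing the equivalence $(\widehat{U},\widehat{G})$ — namely $\widehat{\eta}$ from Theorem~\ref{th:isom_epsilon} and the maps $f_{(\varphi,p,X)}$ from the proof of Theorem~\ref{th:equivalence_of_cats} — transport correctly along $\widetilde{F}\widehat{I}$ and $\widehat{F}\widetilde{I}$, which is immediate since these are isomorphisms of categories.
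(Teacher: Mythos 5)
Your proof is correct and follows the route the paper intends: the paper derives this corollary formally from Theorem~\ref{th:adjunctions} together with Theorem~\ref{th:equivalence_of_cats} (after defining ${\widetilde{G}} = {\widetilde{F}}{\widehat{I}}{\widehat{G}}$), and your argument is precisely that formal derivation by composition of adjoints and equivalences. The one substantive ingredient you make explicit --- the functor identity ${\widehat{U}}{\widehat{F}}=U={\widetilde{U}}{\widetilde{F}}$, i.e.\ that $X\rtimes^{p}_{\varphi} S$, $X\rtimes^{p}_{\widehat{\varphi}} S$ and $X\rtimes^{p}_{\widetilde{\varphi}} S$ coincide --- is left implicit in the paper but is indeed the necessary bridge, and your verification of it (via Lemma~\ref{lem:q} and the order-ideal property of the domains) is the correct generalization of the identity $P(T)\rtimes^p_{\widehat{\psi}} S = P(T)\rtimes^p_{\widetilde{\psi}} S$ from Lemma~\ref{lem:well_def}(3).
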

		
		${\mathcal{P}}_s(S)$ be the subcategory of the category ${\mathcal{P}}(S)$ whose morphisms are surjective. By Lemmas  \ref{lem:n1} and \ref{lem:n2}, restricting the functors of Corollary \ref{cor:cor12} we obtain that each of the categories  ${\widetilde{\mathcal{A}}}_s(S)$ and  ${\widehat{\mathcal{A}}}_s(S)$ is equivalent to the category ${\mathcal{P}}_s(S)$, and also that there are adjunctions between the categories ${\mathcal{A}}_s(S)$ and ${\mathcal{P}}_s(S)$.
		
		For our final result in this section, we assume that $S$ is a monoid.  In this case the  objects of the categories ${\mathcal{P}}(S)$ and ${\mathcal{P}}_s(S)$ are  proper restriction semigroups $T$ satisfying $S\simeq T/\sigma$. Taking into account Remark \ref{rem:trivial}, we obtain the following statement.
		
		\begin{corollary} Let $S$ be a monoid. 
			\begin{enumerate}
				\item The categories ${\mathcal{P}}(S)$ and ${\mathcal{A}}(S)$ are equivalent.
				\item The categories ${\mathcal{P}}_s(S)$ and ${\mathcal{A}}_s(S)$ are equivalent.
			\end{enumerate}
		\end{corollary}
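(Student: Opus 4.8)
The plan is to deduce this corollary directly from Theorem~\ref{th:equivalence_of_cats}, Remark~\ref{rem:trivial} and Lemmas~\ref{lem:n1} and~\ref{lem:n2}, with no new construction required; the whole point is that the monoid hypothesis collapses the three categories of partial actions into one.

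First I would record the consequences of $S$ being a monoid. By Remark~\ref{rem:trivial}, conditions (A3a) and (A3b) hold automatically for every object of ${\mathcal{A}}(S)$, and since ${\widehat{\mathcal{A}}}(S)$ and ${\widetilde{\mathcal{A}}}(S)$ are by definition the full subcategories of ${\mathcal{A}}(S)$ carved out by these conditions, we get the equalities of categories ${\mathcal{A}}(S) = {\widehat{\mathcal{A}}}(S) = {\widetilde{\mathcal{A}}}(S)$; passing to the (non-full) subcategories whose morphisms satisfy (M3) likewise gives ${\mathcal{A}}_s(S) = {\widehat{\mathcal{A}}}_s(S) = {\widetilde{\mathcal{A}}}_s(S)$. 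In particular the inclusion functor ${\widehat{I}}\colon {\widehat{\mathcal{A}}}(S)\to{\mathcal{A}}(S)$ is the identity, so ${\widehat{U}} = U{\widehat{I}} = U$ and ${\widehat{G}}$ is already a functor into ${\mathcal{A}}(S)$.

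For part~(1), Theorem~\ref{th:equivalence_of_cats} asserts that ${\widehat{U}}\colon {\widehat{\mathcal{A}}}(S)\to{\mathcal{P}}(S)$ and ${\widehat{G}}\colon {\mathcal{P}}(S)\to{\widehat{\mathcal{A}}}(S)$, together with the natural isomorphisms $f_{(\varphi,p,X)}$ and ${\widehat{\eta}}_{(\psi,T)}$ built in its proof, form an equivalence of categories. Substituting ${\widehat{\mathcal{A}}}(S) = {\mathcal{A}}(S)$ and ${\widehat{U}} = U$, this is exactly an equivalence between ${\mathcal{A}}(S)$ and ${\mathcal{P}}(S)$, witnessed by $U$ and ${\widehat{G}}$. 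For part~(2) I would check that this equivalence restricts to the wide subcategories of surjective morphisms: by Lemma~\ref{lem:n1}, $U$ sends a morphism satisfying (M3) to a surjective morphism, so $U$ restricts to a functor ${\mathcal{A}}_s(S)\to{\mathcal{P}}_s(S)$; by Lemma~\ref{lem:n2}, ${\widehat{G}}$ sends a surjective morphism to one satisfying (M3), so ${\widehat{G}}$ restricts to a functor ${\mathcal{P}}_s(S)\to{\mathcal{A}}_s(S)$. The only extra remark needed is that the components of the unit and counit survive this restriction: each $f_{(\varphi,p,X)}$ is an isomorphism of ${\mathcal{A}}(S)$, hence satisfies (M3) (an isomorphism carries domains onto domains, cf.\ Remark~\ref{rem:n19}), and each ${\widehat{\eta}}_{(\psi,T)}$ is an isomorphism of ${\mathcal{P}}(S)$, hence surjective; so both families are natural isomorphisms in ${\mathcal{A}}_s(S)$ and ${\mathcal{P}}_s(S)$ respectively. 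Combined with ${\mathcal{A}}_s(S) = {\widehat{\mathcal{A}}}_s(S)$ from the first step, this yields the equivalence between ${\mathcal{P}}_s(S)$ and ${\mathcal{A}}_s(S)$.

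I expect no genuine obstacle here beyond bookkeeping: all the mathematical content lives in Theorem~\ref{th:equivalence_of_cats} and the two lemmas preceding it, and the monoid hypothesis only serves to identify ${\mathcal{A}}(S)$ with ${\widehat{\mathcal{A}}}(S)$. The mildest point of care is confirming that the natural transformations of the equivalence remain morphisms in the smaller categories ${\mathcal{A}}_s(S)$ and ${\mathcal{P}}_s(S)$, which is automatic since isomorphisms in ${\mathcal{A}}(S)$ satisfy (M3) and isomorphisms in ${\mathcal{P}}(S)$ are surjective.
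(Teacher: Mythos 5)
Your proposal is correct and follows the paper's own route exactly: Remark~\ref{rem:trivial} collapses ${\mathcal{A}}(S)$, ${\widehat{\mathcal{A}}}(S)$, ${\widetilde{\mathcal{A}}}(S)$ (and their $s$-versions) into one category, after which Theorem~\ref{th:equivalence_of_cats} gives part~(1) and Lemmas~\ref{lem:n1} and~\ref{lem:n2} give the restriction to surjective morphisms for part~(2). Your extra check that the unit and counit components remain morphisms of the subcategories is a detail the paper leaves implicit, but it does not change the argument.
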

		
		We remark that the category ${\mathcal{P}}(S)$ with $S$ being an inverse semigroup was considered in \cite{Lawson96} (denoted by ${\mathbf{IP}}_S$ in \cite{Lawson96}) where an equivalence of the category ${\mathcal{P}}(S)$ and a certain category of effective ordered coverings of $S$ was established (\cite[Theorem 4.4]{Lawson96}). This leads to a connection between the categories ${\widetilde{\mathcal{A}}}(S)$,  ${\widehat{\mathcal{A}}}(S)$, ${\mathcal{A}}(S)$ and the  categories studied in~\cite{Lawson96}.
		
		\section{$F$-morphisms}\label{s:f}
		
		Let $\psi\colon T\to S$ be a proper morphism between restriction semigroups. In this section we consider the situation where $\psi^{-1}(s)$ has a maximum element for each $s\in S$.  Then a map $\tau\colon S\to T$ can be defined by 
		\begin{equation}\label{eq:tau}
		\tau(s) = {\mathrm{max}}\{t\in T\colon \psi(t)=s\}.
		\end{equation}
		It is easy to see that $\tau$ is a premorphism. 
		
		\begin{proposition}\label{prop:7.1} Let $\psi\colon T\to S$ be a surjective morphism between restriction semigroups such that $\psi^{-1}(s)$ has a maximum element for each $s\in S$. Then $\psi$ is proper.
		\end{proposition}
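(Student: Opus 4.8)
The plan is to reduce properness directly to part~\eqref{i:b62} of Lemma~\ref{lem:lem1}. Recall that, by definition, $\psi$ is proper precisely when it is surjective and $\psi(a)=\psi(b)$ implies $a\sim b$ for all $a,b\in T$. Surjectivity is assumed, so only the second condition has to be checked, and the maximum-element hypothesis is tailor-made to supply a common upper bound for any two elements of a fibre of $\psi$.

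First I would fix $s\in S$ and note that, since $\psi$ is surjective, $\psi^{-1}(s)\neq\varnothing$, so the element $\tau(s)=\max\psi^{-1}(s)$ defined in~\eqref{eq:tau} makes sense and lies in $T$. Then I would take arbitrary $a,b\in T$ with $\psi(a)=\psi(b)$ and denote this common value by $s$. Since $a,b\in\psi^{-1}(s)$ and $\tau(s)$ is the maximum of this set with respect to the natural partial order, we have $a\leq\tau(s)$ and $b\leq\tau(s)$.

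Thus $a$ and $b$ lie below one and the same element of $T$, and Lemma~\ref{lem:lem1}\eqref{i:b62} immediately gives $a\sim b$. Since $a,b$ were an arbitrary pair with $\psi(a)=\psi(b)$, the morphism $\psi$ is proper. I do not expect any real obstacle in this argument: the only point to be careful about is that the maximum is taken in the partial order of $T$ (so that the compatibility criterion of Lemma~\ref{lem:lem1} applies verbatim), and that surjectivity is what guarantees the fibres are nonempty so that $\tau$ is everywhere defined.
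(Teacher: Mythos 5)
Your proof is correct and is essentially identical to the paper's: both arguments observe that any two elements of a fibre lie below the maximum $\tau\psi(a)=\tau\psi(b)$ and then invoke Lemma~\ref{lem:lem1}\eqref{i:b62} to conclude compatibility. No issues.
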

		
		\begin{proof}  Let $s,t\in  T$ and assume that $\psi(s)=\psi(t)$. Then $s\leq \tau\psi(s) = \tau\psi(t)$ and $t\leq \tau\psi(t)$. It follows that $s\sim t$, so that $\psi$ is proper.
		\end{proof}
		
		Note that, under the conditions of Proposition \ref{prop:7.1}, for each $s\in S$ we have
		$$
		\operatorname{\mathrm{dom}}\widetilde{\psi}_s = (\tau(s)^{*})^{\downarrow}  \text{ and } \widetilde{\psi}_s(e) = (\tau(s)e)^+, e\leq \tau(s)^{*}.
		$$
		It follows that $\widetilde{\psi} = \alpha\tau$ where $\alpha$ is the Munn representation of $T$.
		
		\begin{proposition}  Let $\psi\colon T\to S$ be a proper morphism and assume that each $\psi$-class has a maximum element. Let $\tau \colon S\to T$  be the premorphism defined in \eqref{eq:tau}. Then $\tau$ is order-preserving (resp. locally strong, strong, locally multiplicative, multiplicative) if and only if $\widetilde{\psi}$ is order-preserving (resp. locally strong, strong, locally multiplicative, multiplicative).
		\end{proposition}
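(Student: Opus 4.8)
The plan is to exploit the factorization $\widetilde{\psi}=\alpha\tau$ recorded just above, where $\alpha\colon T\to T_{P(T)}$ is the Munn representation of $T$. The point is that $\alpha$ is a \emph{morphism} of restriction semigroups, so it preserves $\cdot$, $^*$, $^+$ and, by Lemma~\ref{lem:lem1}\eqref{i:b1}, the natural partial order. Hence one direction is purely formal: each of the conditions (OP), (Sr), (Sl), (LSr), (LSl), (LM), (M) is an identity (for (OP), an implication between inequalities) between expressions built from values of a premorphism using $\cdot$, $^*$ and $^+$; applying $\alpha$ to such an identity holding for $\tau$ and using that $\alpha$ commutes with these operations yields the same identity for $\alpha\tau=\widetilde\psi$ (for (OP) one uses that $\alpha$ is order-preserving). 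This settles the five ``only if'' implications at once, and ``strong'' and ``locally strong'' then follow since they just mean ``(Sr) and (Sl)'' and ``(LSr) and (LSl)''.

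For the converse I would first isolate the following rigidity property of the elements $\tau(s)$, which is exactly where properness is used. \emph{If $u,v\in T$ satisfy $\psi(u)=\psi(v)$ and $\alpha_u=\alpha_v$, then $u=v$; and if $\psi(u)\sim\psi(v)$ and $\alpha_u\le\alpha_v$, then $u\le v$.} Indeed, $\psi(u)=\psi(v)$ (resp.\ $\psi(u)\sim\psi(v)$) forces $u\sim v$ by Lemma~\ref{lem:proper_compatibility}; and since $\operatorname{\mathrm{dom}}\alpha_w=(w^*)^{\downarrow}$ for every $w\in T$, the equality $\alpha_u=\alpha_v$ forces $u^*=v^*$ while $\alpha_u\le\alpha_v$ forces $u^*\le v^*$. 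The conclusions then follow from parts \eqref{i:b8} and \eqref{i:b7} of Lemma~\ref{lem:lem1}, respectively.

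With this in hand, each ``if'' implication follows a single template. Suppose $\widetilde\psi$ is multiplicative; to get (M) for $\tau$, fix $s,t\in S$, put $u=\tau(s)\tau(t)$, $v=\tau(st)$, note $\psi(u)=\psi(\tau(s))\psi(\tau(t))=st=\psi(v)$, and compute $\alpha_u=\widetilde\psi_s\widetilde\psi_t=\widetilde\psi_{st}=\alpha_v$ (using that $\alpha$ is a morphism); rigidity gives $u=v$. The same argument, with $(u,v)$ taken as $(\tau(st^+)\tau(s^*t),\tau(st))$ for (LM), as $(\tau(s)\tau(t),\tau(st)\tau(t)^*)$ for (Sr), as $(\tau(st^+)\tau(t),\tau(st)\tau(t)^*)$ for (LSr), and as the dual pairs for (Sl), (LSl), handles the remaining multiplicative and strong conditions; in every case both members of the pair have $\psi$-image $st$, which uses only the projection identities $st^+s^*t=st$, $st\,t^*=st$ and $st^+t=st$. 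For (OP): if $\widetilde\psi$ is order-preserving and $s\le t$, then $\psi(\tau(s))=s\le t=\psi(\tau(t))$ (hence $\psi(\tau(s))\sim\psi(\tau(t))$ by Lemma~\ref{lem:lem1}\eqref{i:b61}) and $\alpha_{\tau(s)}=\widetilde\psi_s\le\widetilde\psi_t=\alpha_{\tau(t)}$, so $\tau(s)\le\tau(t)$ by the second half of rigidity.

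The only step that is not routine bookkeeping is the rigidity property, and its crux is the elementary observation that $\alpha_e=\mathrm{id}_{e^{\downarrow}}$ for $e\in P(T)$, so that the Munn representation determines $w^*$ and reflects the order among projections; the rest is the handful of projection computations listed above.
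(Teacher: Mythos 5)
Your proposal is correct and follows essentially the same route as the paper: both factor $\widetilde{\psi}=\alpha\tau$ through the Munn representation, use that $\alpha$ is a morphism for the direction from $\tau$ to $\widetilde{\psi}$, and for the converse use that properness forces compatibility of elements with compatible $\psi$-images while $\operatorname{\mathrm{dom}}\alpha_w=(w^*)^{\downarrow}$ recovers $w^*$, so that Lemma~\ref{lem:lem1}\eqref{i:b7}--\eqref{i:b8} yield the needed equality or inequality in $T$. The only (harmless) differences are organizational: you isolate this as a standalone rigidity lemma and treat (Sr), (Sl), (M) directly, whereas the paper inlines the argument for each identity and reduces the strong and multiplicative cases to their local versions plus (OP) via Theorem~\ref{th:strong_restr} and Proposition~\ref{prop:local2}.
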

		
		\begin{proof}
			Let $s,t\in S$ be such that $s\leq t$. Condition $\widetilde{\psi}_s \leq \widetilde{\psi}_t$ is equivalent to $\widetilde{\psi}_s = \widetilde{\psi}_t\widetilde{\psi}_s^{\,*}$. This can be rewritten
			as $\alpha\tau(s) = \alpha\tau(t)(\alpha\tau(s))^*$ or, since $\alpha$ is a morphism, as 
			\begin{equation}\label{eq:aux26a}
			\alpha(\tau(s)) = \alpha(\tau(t)\tau(s)^*).
			\end{equation}
			Because $s= \psi(\tau(s)) = \psi(\tau(t)\tau(s)^*)$, we have $\tau(s)\sim \tau(t)\tau(s)^*$, as $\psi$ is proper.  The definition of $\alpha$ yields $(\tau(s)^*)^{\downarrow}=\operatorname{\mathrm{dom}}\alpha(\tau(s)) =  \operatorname{\mathrm{dom}}\alpha(\tau(t)\tau(s)^*) = ((\tau(t)\tau(s)^*)^*)^{\downarrow}$.
			It follows that $\tau(s)=\tau(t)\tau(s)^*$.
			We have shown that $\tau$ is order-preserving if and only if so is 
			$\widetilde{\psi}$.
			
			Let $s,t\in S$. The equality $\widetilde{\psi}_s\widetilde{\psi}_{s^*t}=  \widetilde{\psi}_s^{\,+}\widetilde{\psi}_{st}$ can be rewritten in the form 
			\begin{equation}\label{eq:aux26}
			\alpha(\tau(s)\tau(s^*t))=  \alpha(\tau(s)^+\tau(st)).
			\end{equation}
			Observe that $\psi(\tau(s)\tau(s^*t)) = \psi(\tau(s)^+\tau(st)) = st$ so that $\tau(s)\tau(s^*t) \sim \tau(s)^+\tau(st)$. By the definition of $\alpha$ we have $((\tau(s)\tau(s^*t))^*)^{\downarrow}=\operatorname{\mathrm{dom}}\alpha(\tau(s)\tau(s^*t)) =  \operatorname{\mathrm{dom}}\alpha(\tau(s)^+\tau(st)) = ((\tau(s)^+\tau(st))^*)^{\downarrow}$, thus $\tau(s)\tau(s^*t)=\tau(s)^+\tau(st)$. Hence $\tau(s)\tau(s^*t) = \tau(s)^+\tau(st)$.  It follows that $\widetilde{\psi}$ satisfies (LSl) if and only if so does $\tau$. Likewise, $\widetilde{\psi}$ satisfies (LSr) if and only if so does $\tau$. 
			Thus $\widetilde{\psi}$ is locally strong if and only if so is $\tau$.
			Similarly, $\widetilde{\psi}$ is  locally multiplicative if and only if so is $\tau$.
			
			Applying Theorem \ref{th:strong_restr}  (resp. Proposition \ref{prop:local2}), it follows that $\widetilde{\psi}$ is strong (resp. multiplicative) if and only if so is $\tau$.
		\end{proof}
		
		Following \cite{LMS}, we call $\psi$ an $F$-{\em morphism} provided that $\tau$ is order-preserving. If, in addition, $\tau$ is locally strong (or, equivalently, strong, see Theorem \ref{th:strong_restr}) we call $\psi$ an $F\!A$-{\em morphism} (adopting the terminology from \cite{Gomes06, GH09} where a similar notion for left restriction semigroups has been considered). Observe that $F$-morphisms are more general then $F\!A$-morphisms: $F$-morphisms generalize $F$-restriction monoids from \cite{Jones16,Kud15} whereas $F\!A$-morphisms generalize extra proper $F$-restriction monoids of \cite{Kud15}. However, if $S$ and $T$ are inverse semigroups, the difference between  $F$-morphisms and $F\!A$-morphisms vanishes, as both of them specialize to $F$-morphisms between inverse semigroups from \cite{LMS}. Finally, we remark that perfect $F$-morphisms generalize perfect $F$-restriction monoids \cite{Jones16,Kud15} (termed ultra $F$-restriction monoids in \cite{Kud15}).
		
		\section*{Acknowledgements} The work on this paper began during the visits of the first author (in January 2017) and of the second author (in January 2017,  February 2018 and  February 2019) to the Institute of Mathematics, Physics and Mechanics (IMFM) of Ljubljana, some of which were partially supported by ARRS grant P1-0288. The first author was  partially supported by FAPESP of Brazil (Process 2015/09162-9) and by CNPq of Brazil (Process 307873/2017-0).} The second author was partially supported by CNPq of Brazil (Process 404649/2018-1) and by the Funda\c{c}\~ao para a Ci\^{e}ncia e a Tecnologia (Portuguese Foundation for Science and Technology) through the project PTDC/MAT-PUR/31174/2017. The third author was partially supported by ARRS grant P1-0288.
		
		We thank the referee for a very careful reading of the paper and a number of valuable comments and suggestions.

\end{document}